\numberwithin{equation}{section}
\newtheorem{proposition}{Proposition}[section]
\newtheorem{assumption}{Assumption}[section]
\newtheorem{theorem}{Theorem}[section]
\newtheorem{lemma}{Lemma}[section]
\newtheorem{remark}{Remark}[section]
\newtheorem{algorithm}{Algorithm}[section]
\def\RR{\mathbb R}
\def\EE{\mathbb E}
\def\DH{{\mathcal{D}}}
\def\w{v}
\def\theta{z}
\def\var{{\rm Var}}
\def\cov{{\rm Cov}}
\def\C{{\rm Cost}}
\def\LL{L^1_2(\DH\times\RR^{d_v})}
\def\LH{L^1_2(\RR^{d_v})}
\def\LLBi{L^1_2(\DH\times\RR^{d_v};L^2(\Omega))}
\def\LHBi{L^1_2(\RR^{d_v};L^2(\Omega))}
\def\be{\begin{equation}}
\def\ee{\end{equation}}
\def\bea{\begin{eqnarray}}
\def\eea{\end{eqnarray}}
\begin{document}
\title{Multi-scale control variate methods for uncertainty quantification in kinetic equations\footnote{
		The research that led to the present article was partially supported by the INdAM-GNCS 2016 research grant \emph{Numerical methods for uncertainty quantification in balance laws and kinetic equations}.}}
\author{Giacomo Dimarco\footnote{Department of Mathematics \& Computer Science, University of Ferrara, Via Machiavelli 30, Ferrara, 44121, Italy (giacomo.dimarco{@}unife.it).} and Lorenzo Pareschi\footnote{Department of Mathematics \& Computer Science, University of Ferrara, Via Machiavelli 30, Ferrara, 44121, Italy (lorenzo.pareschi{@}unife.it).}} 

	\maketitle
%	\begin{frontmatter}
%		\title{Multi-scale control variate methods for uncertainty quantification in kinetic equations\tnoteref{mytitlenote}}
%		\tnotetext[mytitlenote]{ The research that led to the present article was partially supported by the INdAM-GNCS 2017 project \emph{Numerical methods for hyperbolic and kinetic equations with applications}
%		}
%		
%		%% Group authors per affiliation:
%		\author[mymainaddress]{Giacomo Dimarco\corref{mycorrespondingauthor}}
%		\cortext[mycorrespondingauthor]{Corresponding author}
%		\ead{giacomo.dimarco{@}unife.it}
%		
%		\author[mymainaddress]{Lorenzo Pareschi}
%		\ead{lorenzo.pareschi{@}unife.it}
%		
%		\address[mymainaddress]{Department of Mathematics \& Computer Science, University of Ferrara, Via Machiavelli 30, Ferrara, 44121, Italy}

		\begin{abstract}
			Kinetic equations play a major rule in modeling large systems of interacting particles. Uncertainties may be due to various reasons, like lack of knowledge on the microscopic interaction details or incomplete informations at the boundaries. These uncertainties, however, contribute to the curse of dimensionality and the development of efficient numerical methods is a challenge. In this paper we consider the construction of novel multi-scale methods for such problems which, thanks to a control variate approach, are capable to reduce the variance of standard Monte Carlo techniques.
		\end{abstract}
		%\\[+.2cm]
		%\noindent
		{\bf Keywords:} {Uncertainty quantification, kinetic equations, Monte Carlo methods, control variate, multi-scale methods, multi-fidelity methods, fluid-dynamic limit}
		
%		\begin{keyword} {Uncertainty quantification \sep kinetic equations \sep Monte Carlo methods \sep control variate \sep multi-scale methods \sep multi-fidelity methods \sep fluid-dynamic limit}
%			\MSC[2010] 76P05 \sep 65M75 \sep 65C05  
%		\end{keyword}
%	\end{frontmatter}

%\tableofcontents

\section{Introduction}
Kinetic equations have been applied to model a variety of phenomena whose multiscale nature cannot be described by a standard macroscopic approach \cite{Cer, DPR, Vlas}.  In spite of the vast amount of existing research, both theoretically and numerically (see \cite{DP15, Vill} for recent surveys), the study of kinetic equations has mostly remained deterministic and only recently a systematic study of the effects of uncertainty has been undertaken \cite{HJ,ZJ, LJ, DPZ,JPH,RHS}. In reality, there are many sources of uncertainties that can arise in these equations, like incomplete knowledge of the interaction mechanism between particles/agents, imprecise measurements of the initial and boundary data and other sources of uncertainty like forcing and geometry, etc. 

Besides physics and engineering, the legacy of classical kinetic theory have found recently interesting applications in socio-economic and life sciences \cite{BS2012, NPT, PT2}. The construction of kinetic models describing the above processes has to face the difficulty of the lack of fundamental principles since physical forces are replaced by empirical social forces. These empirical forces are typically constructed with the aim to reproduce qualitatively the observed system behaviors, like the emergence of social structures, and are at best known in terms of statistical information of the modeling parameters \cite{DPZ}.
%For this reason the presence of random inputs characterizing the parameters uncertainty should be considered as an essential feature in the modeling process \cite{DPZ}.

Understanding the impact of these uncertainties is critical to the simulations of the complex kinetic systems to validate the kinetic models, and will allow scientists to obtain more reliable predictions and perform better risk assessment.  These uncertainties, however, contribute to the curse of dimensionality of kinetic equations and the development of efficient numerical methods for uncertainty quantification is a challenge. We refer the reader to \cite{DPL, JinPareschi, LeMK, NTW, PIN_book, ZLX} for a recent overview on various approaches to uncertainty quantification for hyperbolic and kinetic equations.

More precisely, we consider the development of efficient numerical methods for uncertainty quantification in kinetic equations of the general form 
\be
\partial_t f+ {\w}\cdot \nabla_x f = \frac1{\varepsilon}Q(f,f),
\label{eq:FP_general}
\ee
where $f=f(\theta,x,{\w},t)$, $t\ge 0$, $x\in\mathcal{D}\subseteq\RR^{d_x}$, ${\w}\in\RR^{d_{\w}}$, $d_x,d_{\w}\ge 1$, and $\theta\in\Omega\subseteq\RR^{d_\theta}$, $d_\theta \geq 1$, is a {random variable}. In \eqref{eq:FP_general} the parameter $\varepsilon > 0$ is the Knudsen number and the particular structure of the interaction term $Q(f,f)$ depends on the kinetic model considered.

Well know examples are given by the nonlinear Boltzmann equation of rarefied gas dynamics 
\be
Q(f,f)(z,v)=\int_{S^{d_{\w}-1}\times\RR^{d_{\w}}} B({\w},{\w}_*,\omega,\theta) (f(z,v')f(z,v'_*)-f(z,v) f(z,v_*))\,d{\w}_*\,d\omega
\label{eq:Boltzmann}
\ee
where the dependence from $x$ and $t$ has been omitted and
\be
v'=\frac12(v+v_*)+\frac12(|v-v_*|\omega),\quad v_*'=\frac12(v+v_*)-\frac12(|v-v_*|\omega),
\ee
or by nonlinear Vlasov-Fokker-Planck type models 
\be
Q(f,f)(z,x,v)=\nabla_{\w} \cdot \left[ \mathcal B[f](z,x,v)f(z,x,v)+\nabla_{\w} (D(z,x,v)f(z,x,v)) \right]
\label{eq:FP}
\ee
where the time dependence has been omitted and $\mathcal B[\cdot]$ is a non--local operator of the form 
\be
\label{eq:Bf}
\mathcal B[f](\theta,x,{\w}) = \int_{\RR^{d_x}}\int_{\RR^{d_{\w}}}P(x,x_*;{\w},{\w}_*,\theta)({\w}-{\w}_*)f(\theta,x_*,{\w}_*)d{\w}_*dx_*,
\ee
with $D(\theta,{\w})\ge 0$, for all ${\w}\in\RR^{d_{\w}}$, describing the local relevance of the diffusion. 

 {Additional examples of kinetic equations with uncertainties are found in \cite{JinPareschi}. In particular, for concrete examples of kinetic models in social dynamics and the various uncertainties associated we refer to the recent survey \cite{DPZ}.}

The development of numerical methods for kinetic equations presents several difficulties due to the high dimensionality and the intrinsic structural properties of the solution. Non negativity of the distribution function, conservation of invariant quantities, entropy dissipation and steady states are essential in order to compute physically correct solutions \cite{DP15,DQP,JinParma,PZ1,Son}. Preservation of these structural properties is even more challenging in presence of uncertainties which contribute to increase the dimensionality of the problem. 

The simplest class of numerical methods for quantifying uncertainty in partial differential equations are the stochastic collocation methods. In contrast to stochastic Galerkin (SG) methods based on generalized Polynomial Chaos (gPC) (see \cite{albi2015MPE, HJ, ZJ, LJ, DPZ,JPH,RHS} and the volume \cite{JinPareschi} for applications to kinetic equations and the references therein), stochastic collocation methods are non-intrusive, so they preserve all features of the deterministic numerical scheme, and easy to parallelize \cite{Xu}. Here we consider the closely related class of statistical sampling methods based on Monte Carlo (MC) techniques. 

Our motivations in focusing on non-intrusive MC sampling are the following: 
\begin{itemize}
\item nonintrusive methods allow simple integration of existing deterministic numerical solvers. In particular, they afford the use of fast spectral solvers and parallelization techniques which are essential to reduce the computational complexity of Boltzmann-type equations \cite{DP15, DLNR, MP}. 
\item due to their nonintrusive nature and their statistical approach, MC sampling methods have a lower impact on the curse of dimensionality compared to SG methods, especially when the dimension of the uncertainty space becomes very large \cite{Caflisch, Giles, PT2}. 
\item MC methods are effective when the probability distribution of the random inputs is not known analytically or lacks of regularity since other approaches based on stochastic orthogonal polynomials may be impossible to use or may produce poor results \cite{MSS, MS2, Xu}.     
\item kinetic equations close to fluid regimes are well approximated by nonlinear systems of hyperbolic
conservation laws, like the Euler equations. The application of SG methods to such systems often results in systems of gPC coefficients which are not globally hyperbolic since their Jacobian matrices may contain complex eigenvalues \cite{CJK, PDL}.
%\item hyperbolicity implies finite speed of
%propagation which, in the context of uncertainty quantification for hyperbolic and kinetic equations, implies propagation of singular supports into the domain of parameters that describe the uncertain inputs of the system \cite{JinPareschi}.
\end{itemize}

In order to address the slow convergence of MC methods, various techniques have been proposed \cite{Caflisch, Giles, HH, PT2}. Here, we discuss the development of low variance methods based on a control variate approach which makes use of the knowledge of the equilibrium state, or, more in general, the knowledge of a suitable approximated solution of the kinetic system.  These methods, inspired originally by micro--macro decomposition techniques \cite{CrLe, LemMie, LY, DPZ}, take advantage of the multiscale nature of the problem and can be regarded also as multi-fidelity methods \cite{ZLX}.

The rest of the article is organized as follows. In Section \ref{sec:2} we introduce the basic notions concerning MC sampling in uncertainty quantification. Section \ref{sec:3} is devoted to present our approach first for simpler space homogeneous problems and then its generalization to the space non homogeneous case. Several numerical examples in the case of the Boltzmann equation are contained in Section \ref{sec:4} and show the good performance of the new methods when compared to standard MC techniques. Some concluding remarks and future development are discussed in the last Section. 

%preserve the structural properties of the kinetic problem, are capable to significatively reduce the statistical fluctuations of standard MC methods and may further increase their computational efficiency by reducing the number of statistical samples in time. 
%Analogous techniques has been also recently developed in \cite{FPR,PR} to construct spectral methods for the collisional operator of the Boltzmann equation that preserves exactly the Maxwellian steady state of the system. 

%Among the different type of techniques used in UQ, certainly the Monte Carlo methods
%represent one of the most popular and important class \cite{Caflisch, Giles, PT2}. They show all their
%potential when the dimension of the uncertainty space becomes very large. In addition, Monte Carlo methods %are effective when the probability distribution of the random inputs is not known analytically or lacks of %regularity since other approaches based on orthogonal stochastic polynomials may be impossible to use of may %produce poor results.     

\section{Preliminaries}
\label{sec:2}
Before starting our presentation we introduce some notations that will be used in the sequel.
%Let $(\Omega,{\mathcal F}, P)$ be a complete probability space. 
If $z\in \Omega$ is distributed as $p(z)$ we denote the expected value of $f(z,x,v,t)$ by
\be
\EE[f](x, {\w}, t) = \int_{\Omega} f(z,x, {\w}, t)p(z)\,dz,
\ee
and its variance by
\be
\var(f)(x, {\w}, t) = \int_{\Omega} (f(z,x, {\w}, t)-\EE[f](x, {\w}, t))^2 p(z)\,dz.
\ee
Moreover, we introduce the following $L^p$-norm with polynomial weight \cite{PP}
\be
\| f(z,\cdot,t)\|_{L^p_s(\mathcal{D}\times\RR^{d_v})} = \int_{\mathcal{D}\times\RR^{d_v}} |f(z,x,v,t)|^p(1+|v|^s)\,dv\,dx.
\ee
%We use the terminology random field to denote a measurable mapping from $(\Omega,\mathcal{F})$ to 
%$C(L^1_s(\RR^{d_x}\times\RR^{d_v}),\mathcal{B}(L^1_s(\RR^{d_x}\times\RR^{d_v})))$ such that $z\in\Omega$ is associated with $f(z,x,v)$. 
 {Next, for a random variable $Z$ taking values in ${L^p_s(\DH\times\RR^{d_v})}$, we define 
\be
\|Z\|_{{L^p_s(\DH\times\RR^{d_v};L^2(\Omega))}}=\|\EE[Z^2]^{1/2}\|_{{L^p_s(\DH\times\RR^{d_v})}}.
\label{eq:norm1}
\ee
The above norm, if $p\neq 2$, differs from \cite{MSS, MS2}  
\be
\|Z\|_{L^2(\Omega;{L^p_s(\DH\times\RR^{d_v}))}}=\EE\left[\|Z\|^2_{L^p_s(\DH\times\RR^{d_v})}\right]^{1/2}.
\label{eq:norm2}
\ee
Note that by Jensen inequality we have
%\[
%\|\EE[Z^2]^{1/2}\|_{{L^p_s(\DH\times\RR^{d_v})}} = \|\EE[Z^2]\|^{1/2}_{{L^{p/2}_s(\DH\times\RR^{d_v})}} \leq %\EE\left[\|Z^2\|^{1/2}_{L^{p/2}_s(\DH\times\RR^{d_v})}\right]=\EE\left[\|Z\|^2_{L^p_s(\DH\times\RR^{d_v})}%\right]^{1/2} 
%\]
%and then
\be
\|Z\|_{{L^p_s(\DH\times\RR^{d_v};L^2(\Omega))}} \leq \|Z\|_{L^2(\Omega;{L^p_s(\DH\times\RR^{d_v}))}}.
\ee
In the following, to avoid non-essential difficulties, we will refer to norm \eqref{eq:norm1} for $p=1$. The same results hold true for $p=2$ (the two norms coincides) but their extension to norm \eqref{eq:norm2} for $p=1$ is not trivial and typically requires $Z$ to be compactly supported (see Remark \ref{rk:norm} in the next Section). We will also set $s=2$ since boundedness of energy is a natural assumption in kinetic equations.}

Rather than presenting a particular deterministic scheme used to discretize the kinetic equation, we state a general abstract convergence results required in the ensuing error analysis.
In the sequel we assume the following result (see \cite{CDM,DP15, RSS, Son}). 
\begin{assumption}
For an initial data $f_0$ sufficiently regular the deterministic solver used for \eqref{eq:FP_general} satisfies the error bound 
\be
\|f(\cdot,t^n)-f_{\Delta x,\Delta {\w}}^n\|_{L^1_2(\mathcal{D}\times\RR^{d_v})} \leq C (T,f_0)\left(\Delta x^p+\Delta {\w}^q \right),
\label{eq:det}
\ee
with $C$ a positive constant which depends on the final time $T$ and on the initial data $f_0$, and $f_{\Delta x,\Delta {\w}}^n$ the computed approximation of the deterministic solution $f(x,v,t)$ on the mesh $\Delta x$, $\Delta v$ at time $t^n$. 
\end{assumption}
Here, the positive integers $p$ and $q$ characterize the accuracy of the discretizations in the phase-space. For simplicity, we ignored errors due to the time discretization and to the truncation of the velocity domain in the deterministic solver \cite{DP15}.  {Finally, the regularity required by the initial data depends on the specific kinetic model under study  and typically refers to the assumptions needed for existence and uniqueness of the solution} \cite{Cer, DPR, PP, ToVil, Trist, Vill, Vlas}.

\subsection{Standard Monte-Carlo}
\label{sec:mc}
First we recall the standard Monte Carlo method when applied to the solution of a kinetic equation of the type (\ref{eq:FP_general}) with deterministic interaction operator $Q(f,f)$ and random initial data $f(\theta,x,{\w},0)=f_0(\theta,x,{\w})$. 
 %In the rest of the article, to avoid unnecessary difficulties, we will mainly restrict to the case of a one-dimensional random input $d_{\theta}=1$ distributed as $p(\theta)$.

In this setting, the simplest Monte Carlo (MC) method for UQ is based on the following steps. 

\begin{algorithm}[Standard Monte Carlo (MC)  method]~
\begin{enumerate}
\item {\bf Sampling}: Sample $M$ independent identically distributed (i.i.d.) initial data $f_0^k$, $k=1,\ldots,M$ from
the random initial data $f_0$ and approximate these over the grid.
\item {\bf Solving}: For each realization $f_0^k$ the underlying kinetic equation (\ref{eq:FP_general}) is solved numerically by the deterministic solver. We denote the solutions at time $t^n$ by $f^{k,n}_{\Delta x,\Delta {\w}}$, $k=1,\ldots,M$, where  $\Delta x$ and $\Delta {\w}$ characterizes the discretizations  in $x$ and ${\w}$. 
\item {\bf Estimating}: Estimate the expected value of the random solution field with the sample mean of the approximate solution
\be
E_M[f^{n}_{\Delta x,\Delta {\w}}]=\frac1{M} \sum_{k=1}^M f^{k,n}_{\Delta x,\Delta {\w}}.
\label{mcest}
\ee
\end{enumerate}
\end{algorithm}
Similarly, one can approximate higher statistical moments. The above algorithm is straightforward to implement in any existing deterministic code for the particular kinetic equation. Furthermore, the only (data) interaction between different samples { is in step $3$}, when ensemble averages are computed. Thus, the MC algorithms for UQ are non-intrusive {and easily parallelizable as well}.

 {Starting from the fundamental estimate \cite{Caflisch, Lo77}
\be
\EE\left[(\EE[f]-E_M[f])^2\right] \leq C\, \var(f)M^{-1},
\ee
the typical error bound that one obtains using the standard Monte Carlo method described above reads (see \cite{MSS, MS2} for more details)
\begin{proposition}
Consider a deterministic scheme which satisfies \eqref{eq:det} for a kinetic equation of the form \eqref{eq:FP_general} with deterministic interaction operator $Q(f,f)$ and random initial data $f(\theta,x,{\w},0)=f_0(\theta,x,{\w})$. Assume that the initial data is sufficiently regular.
Then, the Monte Carlo estimate defined in \eqref{mcest} satisfies the error bound 
\be
\|\EE[f](\cdot,t^n)-E_M[f^{n}_{\Delta x,\Delta {\w}}]\|_{{\LLBi}} \leq C \left(\sigma_f M^{-1/2} + \Delta x^p + \Delta {\w}^q\right),
\label{eq:MCest}
\ee
where $\sigma_f = \|\var(f)^{1/2}\|_{L^1_2(\mathcal{D}\times\RR^{d_v})}$,
the constant $C=C(T, f_0)>0$ depends on the final time $T$ and the initial data $f_0$.
\end{proposition}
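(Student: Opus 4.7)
The plan is to split the total error via the triangle inequality into a purely statistical part (sampling of the exact solution) and a purely deterministic discretization part (phase-space error averaged over samples), namely
\begin{equation*}
\EE[f](\cdot,t^n)-E_M[f^{n}_{\Delta x,\Delta {\w}}] = \bigl(\EE[f](\cdot,t^n)-E_M[f(\cdot,t^n)]\bigr) + E_M\!\left[f(\cdot,t^n)-f^{n}_{\Delta x,\Delta {\w}}\right],
\end{equation*}
and to bound each summand in the norm $\|\cdot\|_{\LLBi}$. The final estimate then follows by absorbing the two resulting constants into a single $C=C(T,f_0)$.

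For the statistical term I would argue pointwise in $(x,{\w})$. Since the $M$ realizations are i.i.d., the classical identity $\EE\bigl[(\EE[f]-E_M[f])^2\bigr](x,{\w},t^n)=\var(f)(x,{\w},t^n)/M$ holds, which is precisely the variance form of the fundamental Monte Carlo estimate recalled just before the proposition. Taking pointwise square roots and integrating the resulting inequality against the weight $(1+|{\w}|^2)$ over $\DH\times\RR^{d_v}$ reproduces the definition \eqref{eq:norm1} on the left-hand side and yields the contribution $\sigma_f M^{-1/2}$ on the right-hand side.

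For the discretization term I would first use the Jensen inequality displayed in the preliminaries to pass from $\|\cdot\|_{\LLBi}$ to the stronger norm $\|\cdot\|_{L^2(\Omega;\LL)}$. Inside the latter, by linearity of $E_M$ and the triangle inequality in $\LL$,
\begin{equation*}
\|E_M[f(\cdot,t^n)-f^{n}_{\Delta x,\Delta {\w}}]\|_{\LL} \le \frac{1}{M}\sum_{k=1}^{M}\|f^k(\cdot,t^n)-f^{k,n}_{\Delta x,\Delta {\w}}\|_{\LL},
\end{equation*}
and I would apply Assumption 2.1 pathwise to each sample, obtaining the rate $\Delta x^p+\Delta {\w}^q$ with a sample-dependent constant $C(T,f_0^k)$. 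Taking the $L^2(\Omega)$ norm and using that the $f_0^k$ are i.i.d. copies of $f_0$ then absorbs these constants into a single deterministic constant, provided the constant appearing in \eqref{eq:det} is square-integrable in the randomness.

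The principal obstacle lies exactly at this last step: one must ensure that the qualitative "sufficiently regular" hypothesis on $f_0$ entails $\EE\bigl[C(T,f_0)^2\bigr]<\infty$, so that the pathwise bound from Assumption 2.1 survives after taking the $L^2(\Omega)$ norm. This is also the structural reason for carrying out the analysis in the mixed norm \eqref{eq:norm1} rather than in \eqref{eq:norm2}, as flagged in the remark preceding the statement: the reverse direction of Jensen's inequality is not available in general and, as observed in the excerpt, the extension to \eqref{eq:norm2} for $p=1$ would typically require compact support in $\w$, an assumption one wants to avoid in a kinetic setting.
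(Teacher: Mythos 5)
Your proposal is correct and follows essentially the same route as the paper: a triangle-inequality split into a statistical term, bounded by the fundamental Monte Carlo estimate integrated against the weight in the mixed norm \eqref{eq:norm1} (exactly as in Remark \ref{rk:norm}), and a discretization term bounded via Assumption 2.1. Your treatment of the second term is merely more explicit than the paper's (which just calls it ``essentially a discretization error''), and your observation that the sample-wise constants $C(T,f_0^k)$ must be square-integrable in the randomness is precisely what the paper hides in the qualitative ``sufficiently regular initial data'' hypothesis.
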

\begin{proof}
The above bound follows from
\begin{eqnarray*}
\nonumber
&\|\EE[f](\cdot,t^n)-E_M[f^{n}_{\Delta x,\Delta {\w}}]\|_{{\LLBi}} \\  
\nonumber
&&\hskip -2cm \leq \|\EE[f](\cdot,t^n)-E_M[f](\cdot,t^n)\|_{{\LLBi}}\\[-.25cm] 
\\[-.25cm]
\nonumber
&&\hskip -2cm+\|E_M[f](\cdot,t^n)-E_M[f_{\Delta x,\Delta {\w}}^n]\|_{{\LLBi}}\\ 
\nonumber
&&\hskip -2cm\leq C_1 M^{-1/2}\|\var(f)^{1/2}\|_{L^1_2(\mathcal{D}\times\RR^{d_v})}+C_2\left(\Delta x^p + \Delta {\w}^q\right)\\ 
&&
\hskip -2cm\leq C \left(\sigma_f M^{-1/2} + \Delta x^p + \Delta {\w}^q\right),
\end{eqnarray*}
where the first term in the inequality is a Monte Carlo error bound and the second term is essentially a discretization error.
\end{proof}}

Clearly, it is possible to equilibrate the discretization and the sampling errors in the a-priori estimate taking $M={\cal O}(\Delta x^{-2p})$ and
$\Delta x={\cal O}(\Delta {\w}^{q/p})$. This means that in order to have comparable errors the number of samples should be extremely large, especially when dealing with high order deterministic discretizations. This may make the Monte Carlo approach very expensive in practical applications.  
%In order to address the slow convergence of Monte Carlo methods, we discuss in the next section the development of variance reduction Monte Carlo methods.

%\begin{remark}
%The low convergence rate of the MC method motivates the use of variance reduction techniques. Among these, Multi-Level Monte
%Carlo (MLMC) techniques have gained popularity recently \cite{Giles, MSS, MS2}. The building block of MLMC
%is to simultaneously sample a hierarchy of discretizations of the kinetic equation with random
%inputs with a level-dependent number of samples. This amounts
%to running a deterministic solver on a sequence of nested grids in space and velocity. However, for practical purposes, solving a hierarchy of kinetic equations over the various grids may be very difficult (for example on unstructured grids) and may originate a prohibitive computational cost in the case of multi-dimensional Boltzmann-type equations. 
%\end{remark}

 {
\begin{remark}
\label{rk:norm}
Concerning the relationships between the norms \eqref{eq:norm1} and \eqref{eq:norm2} let us remark that for norm \eqref{eq:norm1} and $p\geq 1$ we have
\bea
\nonumber
\|\EE[f]-E_M[f]\|^p_{{L^p_2(\DH\times\RR^{d_v};L^2(\Omega))}} &=& \int_{\DH\times\RR^{d_v}} \EE\left[(\EE[f]-E_M[f])^2\right]^{p/2} (1+|v|)^2\,dv\,dx\\
\nonumber
&\leq& C{M^{-p/2}} \int_{\DH\times\RR^{d_v}} \var(f)^{p/2} (1+|v|)^2\,dv\,dx\\
\nonumber
&=& C{M^{-p/2}} \|\var(f)^{1/2}\|^p_{{L^p_2(\DH\times\RR^{d_v})}}. 
\eea
For norm \eqref{eq:norm2}, we have
\bea
\nonumber
\|\EE[f]-E_M[f]\|^2_{L^2(\Omega;{L^p_2(\DH\times\RR^{d_v}))}} &=& \EE\left[\left(\int_{\DH\times\RR^{d_v}} |\EE[f]-E_M[f]|^p (1+|v|)^2\,dv\,dx\right)^{2/p}\right],
\eea
thus for $p=1$ (if $f$ is compactly supported) thanks to Cauchy-Schwartz inequality
\bea
\nonumber
\|\EE[f]-E_M[f]\|^2_{L^2(\Omega;{L^1_2(\DH\times\RR^{d_v}))}} &=& \EE\left[\left(\int_{\DH\times\RR^{d_v}} |\EE[f]-E_M[f]| (1+|v|)^2\,dv\,dx\right)^{2}\right]\\
\nonumber
&\leq& C_1\,\EE\left[\int_{\DH\times\RR^{d_v}} (\EE[f]-E_M[f])^2 (1+|v|)^4\,dv\,dx\right]\\
\nonumber
&=& C_1\, \|\EE[f]-E_M[f]\|^2_{L^2(\Omega;{L^2_4(\DH\times\RR^{d_v}))}}\\
\nonumber
&\leq & C M^{-1} \|\var(f)^{1/2}\|^2_{L^2_4(\DH\times\RR^{d_v})}. 
\eea
%Finally, for $p=2$ the two norms coincides since
%\bea
%\nonumber
%\|\EE[f]-E_M[f]\|^2_{L^2(\Omega;{L^2_2(\DH\times\RR^{d_v}))}} &=& \EE\left[\int_{\DH\times\RR^{d_v}} (\EE[f]-E_M[f])^2 (1+|v|)^2\,dv\,dx\right]\\
%\nonumber
%&=& \int_{\DH\times\RR^{d_v}} \EE\left[(\EE[f]-E_M[f])^2\right] (1+|v|)^2\,dv\,dx\\
%\nonumber
%&=& \|\EE[f]-E_M[f]\|^2_{{L^2_2(\DH\times\RR^{d_v};L^2(\Omega))}}.
%\eea
The last estimate makes it possible to extend all the results presented in the rest of the article for norm \eqref{eq:norm1} to norm \eqref{eq:norm2} for $p=1$ under the additional assumption of a compactly supported function. Note, however, that in general this is not the case for a kinetic equation in the velocity space. We leave possible generalizations under weaker assumptions to future research.
\end{remark}
}

\section{Multi-scale control variate methods}
\label{sec:3}
In order to improve the performances of standard MC methods, we introduce a novel class of variance reduction Multi-Scale Control Variate (MSCV) methods. The key idea is to take advantage of the knowledge of the steady state solution of the kinetic equation, or more in general, of an approximate time dependent solution with the same asymptotic behavior close to the fluid limit, in order to accelerate the convergence of the Monte Carlo estimate.
In the sequel, we first describe the method in the space homogeneous case and subsequently we discuss its generalization to space non homogeneous problems.

 %The method is essentially a control variate strategy based on a suitable generalization of the microscopic-macroscopic decomposition of the distribution function. 

\subsection{Space homogeneous case}

In order to illustrate the general principles of the method let us consider the space homogeneous problem  
\be
\frac{\partial f}{\partial t} = Q(f,f),
\label{eq:BH}
\ee
where $f=f(\theta,\w,t)$ and with initial data $f(\theta,{\w},0)=f_0(\theta,{\w})$. In \eqref{eq:BH}, without loss of generality, we have fixed $\varepsilon=1$ since in the space homogeneous case the Knudsen number scales with time.
 
We introduce the classical micro-macro decomposition
\be 
f(\theta,{\w},t)=f^{\infty}(\theta,{\w})+g(\theta,{\w},t),
\label{eq:f_MM}
\ee 
where $f^{\infty}(\theta,{\w})$ is the steady state solution of the interaction operator considered $Q(f^\infty,f^\infty)=0$ and $g(\theta,{\w},t)$ is such that 
\be
m_{\phi}(g)=\int_{\RR^{d_{\w}}}\phi({\w})g(\theta,{\w},t)d{\w} = 0,
\label{eq:mom}
\ee
for some moments, for example $\phi({\w})=1,{\w},|{\w}|^2/2$ (the classical setting of conservation of mass, momentum and energy). The above decomposition \eqref{eq:f_MM} applied to the homogeneous kinetic equation \eqref{eq:BH} where $Q(f,f)$ is the nonlinear Boltzmann operator \eqref{eq:Boltzmann} yields the following result.
\begin{proposition}
If the homogeneous equations (\ref{eq:BH}) admits the unique equilibrium state $f^{\infty}(\theta,{\w})$,
the interaction operator $Q(\cdot,\cdot)$ defined in \eqref{eq:Boltzmann} may be rewritten as
\be
Q(f,f)(\theta,{\w},t) = Q(g,g)(\theta,{\w},t) + \mathcal L(f^{\infty},g)(\theta,{\w},t),
\ee
where $\mathcal L(\cdot,\cdot)$ is a linear operator defined as 
\[
\mathcal L(f^{\infty},g)(\theta,{\w},t) = Q(g,f^\infty)(\theta,{\w},t)+Q(f^\infty,g)(\theta,{\w},t).
\]
 The only admissible steady state solution of the problem 
\begin{equation}
\label{eq:micro-macro}
\begin{cases}
\partial_t g(\theta,{\w},t) = Q(g,g)(\theta,{\w},t)+ \mathcal L(f^{\infty},g)(\theta,{\w},t),\\
f(\theta,{\w},t) = f^{\infty}(\theta,{\w})+g(\theta,{\w},t)
\end{cases}\end{equation}
is given by $g^{\infty}(\theta,{\w})\equiv 0$. 
\end{proposition}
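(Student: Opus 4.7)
The plan is to prove the two claims by exploiting the bilinear structure of the Boltzmann collision operator and then using the assumed uniqueness of the equilibrium.

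For the decomposition identity, I would first view the collision operator as the quadratic form associated with the natural bilinear operator
\[
Q(h,k)(z,v)=\int_{S^{d_{\w}-1}\times\RR^{d_{\w}}} B({\w},{\w}_*,\omega,\theta)\bigl(h(z,v')k(z,v'_*)-h(z,v)k(z,v_*)\bigr)\,d{\w}_*\,d\omega,
\]
so that $Q(f,f)$ in \eqref{eq:Boltzmann} is recovered. Substituting $f=f^\infty+g$ from \eqref{eq:f_MM} and using bilinearity in the two arguments yields, after a one-line expansion,
\[
Q(f,f)=Q(f^\infty,f^\infty)+Q(f^\infty,g)+Q(g,f^\infty)+Q(g,g).
\]
The assumption that $f^\infty$ is an equilibrium gives $Q(f^\infty,f^\infty)=0$, and the remaining cross terms are precisely $\mathcal L(f^\infty,g)$, proving the first part.

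For the characterization of the steady state of \eqref{eq:micro-macro}, I would argue by contradiction using uniqueness. Suppose $g^\infty(\theta,{\w})$ is a steady state of the $g$-equation; then
\[
Q(g^\infty,g^\infty)+\mathcal L(f^\infty,g^\infty)=0,
\]
which by the identity just established is equivalent to $Q(f^\infty+g^\infty,f^\infty+g^\infty)=0$. Hence $f^\infty+g^\infty$ is a steady state of the original homogeneous Boltzmann equation \eqref{eq:BH}. To apply the uniqueness hypothesis in the correct class, I would observe that the constraint \eqref{eq:mom} with $\phi({\w})=1,{\w},|{\w}|^2/2$ guarantees that $f^\infty+g^\infty$ has the same mass, momentum and energy as $f^\infty$ for a.e.\ $\theta\in\Omega$. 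The uniqueness of the equilibrium in this moment class then forces $f^\infty+g^\infty=f^\infty$ pointwise in $\theta$, so $g^\infty\equiv 0$.

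The genuinely nontrivial content here is essentially the bilinear expansion together with the correct use of uniqueness; the main conceptual care point will be ensuring that the moment constraints \eqref{eq:mom} are what allow the uniqueness of $f^\infty$ to be invoked, since without \eqref{eq:mom} the sum $f^\infty+g^\infty$ would generally sit in a different Maxwellian class than $f^\infty$ and the argument would fail. Everything else reduces to routine manipulation of the bilinear form $Q$.
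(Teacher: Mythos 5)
Your proposal is correct and follows essentially the same route as the paper, which dispatches the result in one line as "an immediate consequence of the structure of the collision operator and the fact that at the steady state we have $Q(f^{\infty},f^{\infty})=0$": you simply spell out the bilinear expansion and the use of uniqueness of the equilibrium. Your additional remark that the moment constraint \eqref{eq:mom} keeps $f^\infty+g^\infty$ in the same conservation class, so that uniqueness of the equilibrium can legitimately be invoked, is a sound clarification of what the paper leaves implicit.
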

The proof is an immediate consequence of the structure of the collision operator and the fact that at the steady state we have $Q(f^{\infty},f^{\infty})=0$. 
Under suitable assumptions, {one can show that} $f(\theta,{\w},t)$ exponentially decays to the equilibrium solution \cite{Vill, Trist, ToVil}. As a consequence, the non--equilibrium part of the micro--macro approximation $g(\theta,{\w},t)$ exponentially decays to $g^{\infty}(\theta,{\w})\equiv 0$ for all $\theta\in\Omega$. Similar conclusions are obtained in the case of Fokker-Planck type operators \cite{DPZ, Tos1999}.

%Then, the idea consists in using the Monte Carlo estimation procedure only to the non equilibrium part $g(\theta,w,t)$ solution of (\ref{eq:micro-macro}).

\subsubsection{Control variate strategies}
%Micro-macro decomposition and variance reduction}

The crucial aspect is that the equilibrium state $g^\infty(\theta,{\w})$ is zero and therefore, independent from $\theta$. More precisely, we can decompose the expected value of the distribution function in
an equilibrium and non equilibrium part 
\be
\begin{split}
{\mathbb E}[f]({\w},t)&=\int_{\Omega} f(\theta,{\w},t)p(\theta)d\theta\\
&=\int_{\Omega} f^\infty(\theta,{\w})p(\theta)d\theta+
\int_{\Omega} g(\theta,{\w},t)p(\theta)d\theta\\
&=\EE[f^\infty](v)+\EE[g](v,t),
\end{split}
\label{eq:mme}
\ee
and then exploit the fact that since $f^\infty(\theta,{\w})$ is known $\EE[f^\infty](v)$ can be evaluated with a negligible error, to have an estimate of the error using $M$ samples of the 
type
\[ 
\| {\EE}[f](\cdot,t)-\EE[f^\infty](\cdot)-E_M[g](\cdot,t)\|_{{\LHBi}} \simeq \sigma_g M^{-1/2},
\] instead of the standard MC estimate
\[ 
\| {\EE}[f](\cdot,t)-{E}_M[f](\cdot,t)]\|_{{\LHBi}} \simeq \sigma_f M^{-1/2}, 
\]
where  {$\sigma_g=\|\var(g)^{1/2}\|_{\LH}$ and $\sigma_f=\|\var(f)^{1/2}\|_{\LH}$}. Now, since it is known that the non equilibrium part $g$ goes to zero in time exponentially fast, then also its variance goes to zero, which means that for long times the Monte Carlo integration based on the micro-macro decomposition becomes exact, since it only depends on the way in which $\EE[f^\infty]$ is computed.

We can improve this micro-macro Monte Carlo method using a parameter dependent control variate approach. More precisely, given $M$ i.i.d. samples $f^{k}(v,t)$, $k=1,\ldots,M$ of our solution at time $t$ we can write 
\be
{\mathbb E}[f]({\w},t) \approx E^{\lambda}_M[f](v,t)=\frac1{M} \sum_{k=1}^M f^{k}(v,t) - \lambda\left(\frac1{M} \sum_{k=1}^M f^{\infty,k}(v)-{\bf f}^{\infty}({\w})\right),
\label{eq:nest}
\ee
where $\lambda\in\RR$ and ${\bf f}^{\infty}({\w})={\mathbb E}[f^{\infty}](\w)$ or an approximation of it with a negligible error.

\begin{lemma}
The control variate estimator \eqref{eq:nest} is unbiased and consistent for any choice of $\lambda\in\RR$. In particular, for $\lambda=0$ we obtain $E^{0}_M[f]=E_M[f]$ the standard MC estimator and for $\lambda=1$ we get
\be
E^{1}_M[f](v,t) = {\bf f}^{\infty}({\w}) + \frac1{M} \sum_{k=1}^M (f^{k}(v,t) - f^{\infty,k}(v)) = {\bf f}^{\infty}({\w})+E_M[g](v,t),
\ee
the micro-macro estimator based on \eqref{eq:mme}.
\label{le:1}
\end{lemma}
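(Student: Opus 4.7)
The plan is to verify each of the three assertions in turn, all of which follow essentially by linearity of expectation and the strong law of large numbers, together with the micro--macro decomposition \eqref{eq:f_MM}.

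First I would address unbiasedness. Taking the expectation of \eqref{eq:nest} with respect to the $M$ i.i.d. samples and using linearity, one has
\be
\EE\left[E^{\lambda}_M[f](v,t)\right] = \EE[f](v,t) - \lambda\bigl(\EE[f^{\infty}](v) - {\bf f}^{\infty}({\w})\bigr),
\ee
and since by assumption ${\bf f}^{\infty}({\w})=\EE[f^{\infty}](v)$ (or coincides with it up to a negligible error), the bracketed term vanishes, independently of $\lambda$. Consistency follows from the same identity by invoking the strong law of large numbers applied separately to the two sample means $M^{-1}\sum_k f^k(v,t)$ and $M^{-1}\sum_k f^{\infty,k}(v)$, which converge almost surely to $\EE[f](v,t)$ and $\EE[f^{\infty}](v)$ respectively as $M\to\infty$.

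For the two specific values of $\lambda$ the identification is just algebra. Setting $\lambda=0$ in \eqref{eq:nest} kills the control term and reproduces the sample mean $E_M[f]$ verbatim. Setting $\lambda=1$ regroups the samples as
\be
E^{1}_M[f](v,t) = {\bf f}^{\infty}({\w}) + \frac{1}{M}\sum_{k=1}^M \bigl(f^{k}(v,t) - f^{\infty,k}(v)\bigr),
\ee
and then using the micro--macro decomposition \eqref{eq:f_MM} pointwise in $\theta$, namely $f^k(v,t) - f^{\infty,k}(v) = g^k(v,t)$ for each realization, the sum is recognized as $E_M[g](v,t)$, giving the estimator based on \eqref{eq:mme}.

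There is no real obstacle here since the statement is structural rather than quantitative: it says nothing about the variance of the estimator, only that it has the correct mean and limit. The only mild subtlety is the hypothesis that ${\bf f}^{\infty}$ is known exactly (or with negligible error); if one wanted a strict identity one would simply define ${\bf f}^{\infty}=\EE[f^{\infty}]$, and the approximate case would add a deterministic bias that does not affect consistency in the limit $M\to\infty$.
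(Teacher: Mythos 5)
Your proposal is correct and follows essentially the same route as the paper: unbiasedness by linearity of expectation using $\EE[f^k]=\EE[f]$ and $\EE[f^{\infty,k}]=\EE[f^\infty]$, consistency by the law of large numbers applied to the two sample means (the paper states convergence in probability where you invoke the strong law, an inessential strengthening), and the $\lambda=0,1$ cases by direct algebra with the micro--macro decomposition. No gaps.
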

\begin{proof}
The expected value of the control variate estimator $E^{\lambda}_M[f]$ in \eqref{eq:nest} yields the unbiasedness for any choice of $\lambda\in\RR$ 
\[
\EE[E^{\lambda}_M[f]]=\frac1{M} \sum_{k=1}^M \EE[f^{k}]-\lambda\left(\frac1{M} \sum_{k=1}^M \EE[f^{\infty,k}]-\EE[f^{\infty}]\right)=\EE[f],
\]
since $\EE[f^{k}]=\EE[f]$ and $\EE[f^{\infty,k}]=\EE[f^{\infty}]$ for $k=1,\ldots,M$.
Moreover, since $f^k$ and $f^{\infty,k}$ are i.i.d. random variables
\[
\displaystyle\lim_{M\to \infty} \frac1{M} \sum_{k=1}^M f^{k} - \lambda\left(\frac1{M} \sum_{k=1}^M f^{\infty,k}-\EE[f^{\infty}]\right) \overset{p}{=}  \EE[f],
\]
from the consistency of the standard MC estimator and where the last identity has to be understood in probability sense \cite{HH,Lo77}. The last part of Lemma follows directly from \eqref{eq:nest}.
\end{proof}
Let us consider the random variable
\[
f^{\lambda}(z,v,t)=f(v,z,t)-\lambda(f^{\infty}(z,v)-\EE[{f}^{\infty}](\cdot,v)).
\]
Clearly, ${\mathbb E}[f^\lambda]={\mathbb E}[f]$, $E_M[f^\lambda]=E_M^{\lambda}[f]$ and we can quantify its variance at the point $(v,t)$ as 
\be
\var(f^\lambda)=\var(f)+\lambda^2 \var(f^\infty)-2\lambda\cov(f,f^\infty).
\label{eq:var1}
\ee
%It is immediate to verify that $\var(f^1)=\var(g)$ in the micro-macro decomposition.
We have the following
\begin{theorem}
If $\var(f^\infty)\neq 0$ the quantity   
\be
\lambda^* = \frac{\cov(f,f^\infty)}{\var(f^\infty)}
\label{eq:lambdas}
\ee
minimizes the variance of $f^\lambda$ at the point $(v,t)$ and gives
\be
\var(f^{\lambda^*}) = (1-\rho_{f,f^\infty}^2)\var(f), 
\label{eq:var2}
\ee
%\bea
%\nonumber
%\var(E^{\lambda}_M[m_\phi(f)])&=&\var(E_M[m_\phi(f)])+\lambda^2 \var(E_M[m_\phi(f^\infty)])\\
%\nonumber
%&&-2\lambda\cov(E_M[m_\phi(f)],E_M[m_\phi(f^\infty)]).
%\eea
%It is immediate to show that  
%\be
%\lambda^* = \frac{\cov(E_M[m_\phi(f)],E_M[m_\phi(f^\infty)])}{\var(E_M[m_\phi(f^\infty)])}
%\ee
%minimizes the variance of the estimator and with this choice
%\[
%\var(E^{\lambda}_M[m_\phi(f)]) = (1-\rho_{f,f^\infty}^2)\var(E_M[m_\phi(f)]) 
%\]
where $\rho_{f,f^\infty} \in [-1,1]$ is the correlation coefficient between $f$ and $f^\infty$. In addition, we have  
\be
\lim_{t\to\infty} \lambda^*(v,t) =1,\qquad \lim_{t\to\infty} \var(f^{\lambda^*})(v,t)=0\qquad \forall\, v \in \RR^{d_v}.
\label{eq:lambdasa}
\ee
\label{pr:1}
\end{theorem}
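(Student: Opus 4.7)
The plan is to treat the first two claims as a standard control-variate optimization of a univariate quadratic, and then to push the asymptotic claim through using the exponential decay of $g$ to zero that was invoked right before the theorem.

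First I would start from identity \eqref{eq:var1}, which expresses $\var(f^\lambda)$ as a quadratic in $\lambda$ with positive leading coefficient $\var(f^\infty)>0$ (nonzero by hypothesis). Differentiating in $\lambda$ and equating to zero gives the unique critical point $\lambda^\ast = \cov(f,f^\infty)/\var(f^\infty)$, which, because the parabola opens upward, is the global minimizer. Substituting $\lambda^\ast$ back into \eqref{eq:var1} yields
\[
\var(f^{\lambda^\ast}) = \var(f) - \frac{\cov(f,f^\infty)^2}{\var(f^\infty)},
\]
and factoring $\var(f)$ while recognizing $\rho_{f,f^\infty}^2 = \cov(f,f^\infty)^2/(\var(f)\var(f^\infty))$ gives the announced formula \eqref{eq:var2}. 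The Cauchy--Schwarz inequality guarantees $\rho_{f,f^\infty}\in[-1,1]$, so the right-hand side is nonnegative, as required.

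For the asymptotic statement, I would exploit the micro--macro decomposition $f=f^\infty+g$ together with the exponential decay $g(z,v,t)\to 0$ that was recalled from \cite{Vill, Trist, ToVil, Tos1999} just before the theorem. Bilinearity of covariance gives
\[
\cov(f,f^\infty) = \var(f^\infty) + \cov(g,f^\infty),\qquad \var(f) = \var(f^\infty) + 2\cov(g,f^\infty)+\var(g),
\]
and since $g(\cdot,v,t)\to 0$ pointwise in $v$ (with $\var(f^\infty)(v)>0$ by hypothesis), both $\cov(g,f^\infty)(v,t)$ and $\var(g)(v,t)$ tend to zero as $t\to\infty$. The first limit then yields $\lambda^\ast(v,t)\to \var(f^\infty)/\var(f^\infty)=1$. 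For the variance, the cleanest route is the bound $\var(f^{\lambda^\ast})\le \var(f^1)$ (since $\lambda^\ast$ is the minimizer), and
\[
\var(f^1) = \var(f-f^\infty) = \var(g) \xrightarrow[t\to\infty]{} 0,
\]
which forces $\var(f^{\lambda^\ast})(v,t)\to 0$.

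The only genuine subtlety is making sure the pointwise-in-$v$ decay of $g$ really transfers to the scalar quantities $\cov(g,f^\infty)(v,t)$ and $\var(g)(v,t)$; this follows by dominated convergence provided the convergence $g(\cdot,v,t)\to 0$ in $\Omega$ is controlled, which is exactly what the exponential-trend-to-equilibrium results cited in the paper supply. Everything else is direct algebra on \eqref{eq:var1}, so I expect no real obstacle beyond invoking those asymptotic results with the right integrability.
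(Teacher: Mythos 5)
Your proof is correct and follows essentially the same route as the paper: the paper also obtains $\lambda^*$ by differentiating \eqref{eq:var1}, notes the second derivative $2\var(f^\infty)>0$ to confirm a minimum, substitutes back to obtain \eqref{eq:var2}, and deduces the limits \eqref{eq:lambdasa} directly from $f\to f^\infty$ as $t\to\infty$. Your additional detail in the asymptotic step (in particular the bound $\var(f^{\lambda^*})\le \var(f-f^\infty)=\var(g)\to 0$ and the remark on transferring the pointwise decay of $g$ to the variance and covariance in the random variable) merely makes explicit what the paper asserts in a single line.
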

\begin{proof}
Equation \eqref{eq:lambdas} is readily found by direct differentiation of \eqref{eq:var1} with respect to $\lambda$ and then observing that $\lambda^*$ is the unique stationary point. The fact that $\lambda^*$ is a minimum follows from the positivity of the second derivative $2\var(f^\infty)>0$. Then, by substitution in \eqref{eq:var1} of the optimal value $\lambda^*$ one finds \eqref{eq:var2} where
\[
\rho_{f,f^\infty} = \frac{\cov(f,f^\infty)}{\sqrt{\var(f)\var(f^\infty)}}.
\] 
In addition, since as $t\to\infty$ we have $f\to f^\infty$, asymptotically $\lambda^*\to 1$ and $\var(f^{\lambda^*})\to 0$ independently of $v$.
\end{proof}
In practice, $\cov(f,f^\infty)$ appearing in $\lambda^*$ is not known and has to be estimated. Starting from the $M$ samples we can compute unbiased estimators for the variance and the covariance as
\bea
\label{eq:varm}
{\var}_M(f^\infty) &=& \frac1{M-1}\sum_{k=1}^M (f^{\infty,k}-E_M[f^\infty])^2,\\
\label{eq:covm}
{\cov}_M(f,f^\infty) &=& \frac1{M-1}\sum_{k=1}^M (f^{k}-E_M[f])(f^{\infty,k}-E_M[f^\infty]),
\eea
and estimate
\be
{\lambda}_M^*= \frac{{\cov}_M(f,f^\infty)}{{\var}_M(f^\infty)}.
\ee 
It can be verified easily that ${\lambda}_M^*\to 1$ as $f\to f^\infty$.  
\begin{remark}~
\label{rk:31}
\begin{itemize}
\item 
The method relies on the possibility to have an accurate estimate of the expected value of the equilibrium state ${\mathbb E}[f^{\infty}]$. For many space homogeneous kinetic models the equilibrium state can be computed directly from the initial data thanks to the conservation properties of $Q(f,f)$, therefore, its expectation can be evaluated with arbitrary accuracy at a negligible cost  since it does not depend on the solution computed at each time step. 
%\item 
%Similarly, the variance of the equilibrium state ${\var}(f^\infty)$ can be computed with negligible error from the initial data. In this latter case, however, the estimated optimal value $\lambda_M^*={\cov}_M(f,f^\infty)/{\var}(f^\infty)$ will not converge to $1$ for large times.  
\item
Note that, we are implicitly assuming that for large times the underlaying deterministic numerical method is able to capture correctly the large time behavior of the kinetic model. We refer to \cite{DP15, Gosse, JinParma, PZ1} and the references therein for an overview of such schemes for kinetic equations.
\end{itemize}
\end{remark}

\subsubsection{Time dependent control variate}
\label{sec:BGK}
The control variate approach described above can be generalized with the aim to improve the MC estimate also for shorter times. In fact, the importance of high correlation with the control variate for effective variance reduction can in \eqref{eq:var2} be seen with clarity.

To this goal one can consider as a control variate a time dependent approximation of the solution $\tilde{f}(\theta,{\w},t)$, whose evaluation is significantly cheaper than computing $f(\theta,{\w},t)$, such that 
$m_\phi(\tilde{f})=m_\phi(f)$ for some moments and that
$\tilde{f}(\theta,{\w},t)\to f^{\infty}(\theta,{\w})$ as $t\to\infty$. In terms of function decomposition this would correspond to write 
\be
f(\theta,{\w},t)={{\tilde{f}}}(\theta,{\w},t)+\tilde{g}(\theta,{\w},t),
\label{eq:f_MM2}
\ee
with $m_\phi(\tilde{g})=0$ for the same moments. Note that, even in this case, the perturbation $\tilde{g}(\theta,v,t) \to 0$ as $t\to \infty$.
 
As an illustrative example, we consider the space homogeneous Boltzmann equation \eqref{eq:BH} where $Q(f,f)$ is given by  \eqref{eq:Boltzmann} and assume 
 $\tilde{f}(\theta,{\w},t)$ to be the exact solution of the space homogeneous BGK approximation 
\be
\frac{\partial \tilde{f}}{\partial t} =\nu (\tilde{f}^\infty - \tilde{f}), 
\label{eq:BGKh}
\ee 
where $\nu>0$ is a constant, and 
for the same initial data $f_0(\theta,{\w})$. Thanks to the time invariance of the equilibrium state we have $\tilde{f}^\infty=f^\infty$ and we can write the exact solution to \eqref{eq:BGKh} as
\be
\tilde{f}(\theta,{\w},t) = e^{-\nu t} f_0(\theta,{\w})+(1-e^{-\nu t}) f^{\infty}(\theta,{\w}).
\label{eq:BGKexa}
\ee
We can assume again that the expected value of the control variate $\EE[\tilde{f}]({\w},t)$ is computed with arbitrary accuracy at a negligible cost since it is a convex combination of the initial data and the equilibrium part. We denote this value by 
\be
\tilde{\bf f}(v,t)=e^{-\nu t} {\bf f}_0({\w})+(1-e^{-\nu t}) {\bf f}^{\infty}({\w}),
\label{eq:BGKexas}
\ee
where ${\bf f}_0={\mathbb E}[f_0(\cdot,{\w})]$ and ${\bf f}^{\infty}={\mathbb E}[f^{\infty}](\w)$ or accurate approximations of the same quantities.
 
%Similarly to the previous Section, the estimate of the solution can be improved by a control variate approach where now we use $\tilde{f}(\theta,{\w},t)$ instead of ${f^\infty}(\theta,{\w})$. 
The control variate estimate then reads
\be
{\mathbb E}[f]({\w},t) \approx \tilde{E}^{\lambda}_M[f](v,t)=\frac1{M} \sum_{k=1}^M f^{k}(v,t) - \lambda\left(\frac1{M} \sum_{k=1}^M \tilde{f}^{k}(v,t)-\tilde{\bf f}({\w},t)\right).
\label{eq:nest2}
\ee
By Lemma \ref{le:1} the above control variate estimator is unbiased and consistent for any $\lambda\in\RR$. For $\lambda=0$ we recover again the standard MC estimator, whereas for $\lambda=1$ we have the estimator $\tilde{E}^{1}_M[f](v,t)=\tilde{\bf f}(v,t)+E_M[\tilde g](v,t)$ given by the function decomposition \eqref{eq:f_MM2}.

By Theorem \ref{pr:1}, if $\var(\tilde{f})\neq 0$ the optimal value for $\lambda$ in terms of variance reduction is given by
\be
\lambda^* = \frac{\cov(f,\tilde{f})}{\var(\tilde{f})},
\label{eq:lambdas2}
\ee
and can be estimated using the analogous of expressions \eqref{eq:varm} and \eqref{eq:covm}. Since for large times we have $f\to f^\infty$ and $\tilde f \to f^\infty$ again $\lambda^*\to 1$ and $\var(f^{\lambda^*})\to 0$.

The resulting multi-scale control variate algorithm, which can be easily generalized to other control variate functions, is summarized in the following steps:
\begin{algorithm}[Multi-scale Control Variate (MSCV) method - homogeneous case]~
\label{alg:3}
\begin{enumerate}
%\item {\bf Initialize the control variate}: From the random initial data $f_0$ compute $\tilde{f}_{\Delta {\w}}^{0}$ on the mesh $\Delta {\w}$ and denote by $\tilde{\bf f}_{\Delta {\w}}^{0}$ and $\tilde{\bf f}_{\Delta {\w}}^{\infty}$ accurate estimates of $\EE[\tilde{f}_{\Delta {\w}}^{0}]$ and $\EE[\tilde{f}_{\Delta {\w}}^{\infty}]$. 
\item {\bf Sampling}: Sample $M$ i.i.d. initial data $f_0^k$, $k=1,\ldots,M$ from
the random initial data $f_0$ and approximate these over the grid.  
\item {\bf Solving:} For each realization $f_0^k$, $k=1,\ldots,M$
\begin{enumerate}
\item Compute the control variate $\tilde{f}_{\Delta {\w}}^{k,n}$, $k=1,\ldots,M$ at time $t^n$ using \eqref{eq:BGKexa} and denote by $\tilde{\bf f}_{\Delta {\w}}^{n}$ an accurate estimate of $\EE[\tilde{f}_{\Delta {\w}}^{n}]$ obtained from $\tilde{\bf f}_{\Delta {\w}}^{0}$ and $\tilde{\bf f}_{\Delta {\w}}^{\infty}$ using \eqref{eq:BGKexas}. 

\item Solve numerically
the underlying kinetic equation (\ref{eq:BH})  by the corresponding deterministic solvers. We denote the solution at time $t^n$ by $f^{k,n}_{\Delta {\w}}$, $k=1,\ldots,M$. 
\end{enumerate} 
\item {\bf Estimating}: 
\begin{enumerate}
\item
Estimate the optimal value of $\lambda^*$ as
\[
{\lambda}_M^{*,n}= \frac{\sum_{k=1}^M (f_{\Delta \w}^{k,n}-E_M[f_{\Delta \w}^n])(\tilde f_{\Delta \w}^{k,n}-E_M[\tilde f_{\Delta \w}^n])}{\sum_{k=1}^M (\tilde f_{\Delta \w}^{k,n}-E_M[\tilde f_{\Delta \w}^n])^2}.
\]
\item
Compute the expectation of the random solution with the control variate estimator
\be
\tilde{E}^{\lambda^*}_M[f^n_{\Delta v}]=\frac1{M} \sum_{k=1}^M f^{k,n}_{\Delta {\w}} - \lambda_M^{*,n}\left(\frac1{M} \sum_{k=1}^M \tilde{f}_{\Delta {\w}}^{k,n}-\tilde{\bf f}_{\Delta {\w}}^{n}\right).
\label{mcest2}
\ee
%\be
%E_{M,M_E}[f^{n}_{\Delta {\w}}]=\frac1{M_E} \sum_{k=1}^{M_E} f_{\Delta {\w}}^{\infty,k}+\frac1{M} \sum_{k=1}^M g^{k,n}_{\Delta {\w}}.
%\label{mcest2}
%\ee
\end{enumerate}
\end{enumerate}
\end{algorithm}
Using such an approach, by similar arguments as in \cite{MSS, MS2}, one obtains the following error bound.
 {\begin{proposition}
Consider a deterministic scheme which satisfies \eqref{eq:det} in the velocity space for the solution of the homogeneous kinetic equation \eqref{eq:BH} with deterministic interaction operator $Q(f,f)$ and random initial data $f(\theta,{\w},0)=f_0(\theta,{\w})$. Assume that the initial data is sufficiently regular.
Then, the MSCV estimate defined in \eqref{mcest2} satisfies the error bound 
\be
\|\EE[f](\cdot,t^n)-\tilde{E}^{\lambda^*}_M[f^{n}_{\Delta {\w}}]\|_{{\LHBi}}
\leq  C(T,f_0)\left\{\sigma_{f^{\lambda_*}} M^{-1/2}+ \Delta {\w}^q\right\} 
\label{eq:errHMMC}
\ee
where $\sigma_{f^{\lambda_*}}=\|(1-\rho^2_{f,\tilde{f}})^{1/2}\var(f)^{1/2}\|_{\LL}$, 
the constant $C>0$ depends on the final time $T$ and the initial data $f_0$. 
\end{proposition}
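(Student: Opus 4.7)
The strategy is to mirror the proof of the standard Monte Carlo estimate, but applied to the control variate random field
\[
f^{\lambda^*}(\theta,\w,t) = f(\theta,\w,t) - \lambda^*\bigl(\tilde{f}(\theta,\w,t) - \EE[\tilde{f}](\w,t)\bigr).
\]
By Lemma \ref{le:1} this variable is unbiased, $\EE[f^{\lambda^*}] = \EE[f]$, and by construction $E_M[f^{\lambda^*}] = \tilde{E}^{\lambda^*}_M[f]$.

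First, I would split the error via the triangle inequality,
\[
\|\EE[f](\cdot,t^n) - \tilde{E}^{\lambda^*}_M[f^{n}_{\Delta \w}]\|_{\LHBi}
\leq \|\EE[f^{\lambda^*}](\cdot,t^n) - E_M[f^{\lambda^*}](\cdot,t^n)\|_{\LHBi}
+ \|E_M[f^{\lambda^*}](\cdot,t^n) - E_M[f^{\lambda^*,n}_{\Delta \w}]\|_{\LHBi},
\]
so that the first term is a pure Monte Carlo sampling error for the sharper random variable $f^{\lambda^*}$, while the second is a deterministic discretization error carried through the (linear) sample-mean operator.

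Second, for the sampling term I would use the fundamental MC bound $\EE[(\EE[Z]-E_M[Z])^2]\leq C\,\var(Z) M^{-1}$ pointwise in $(\w,t)$, combined with Theorem \ref{pr:1} which gives $\var(f^{\lambda^*}) = (1-\rho^2_{f,\tilde{f}})\,\var(f)$. Integrating the pointwise estimate against the weight $(1+|v|^2)$ on $\RR^{d_{\w}}$ produces the contribution $C_1\,\sigma_{f^{\lambda_*}}\,M^{-1/2}$. For the discretization term I would exploit linearity: since $f^{\lambda^*,n}_{\Delta \w} = f^{n}_{\Delta \w} - \lambda^*(\tilde{f}^{n}_{\Delta \w} - \tilde{\bf f}^{n}_{\Delta \w})$ with $\tilde{\bf f}^{n}_{\Delta \w}$ computed to negligible accuracy (as in Remark \ref{rk:31}), the difference reduces to the sample mean of $(f - f^{n}_{\Delta \w}) - \lambda^*(\tilde{f} - \tilde{f}^{n}_{\Delta \w})$; Assumption \ref{assumption:1} applied both to the Boltzmann solver and to the BGK control-variate solver bounds each piece by $C\Delta \w^q$ in $\LH$, and boundedness of $\lambda^*$ (in particular $\lambda^*\to 1$ asymptotically by \eqref{eq:lambdasa}) yields the term $C_2\,\Delta \w^q$.

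The main obstacle, and the point at which the argument is only formal, is that Algorithm \ref{alg:3} actually uses the empirical coefficient $\lambda_M^{*,n}$ estimated from the sample via \eqref{eq:varm}--\eqref{eq:covm}, rather than the population value $\lambda^*$ that appears in the statement. Controlling the extra contribution $(\lambda^* - \lambda_M^{*,n})\bigl(E_M[\tilde{f}^{n}_{\Delta \w}] - \tilde{\bf f}^{n}_{\Delta \w}\bigr)$ requires showing that $\lambda_M^{*,n} - \lambda^* = \mathcal{O}(M^{-1/2})$ via a standard delta-method argument on the sample covariance and variance estimators, so that the induced bias is of order $M^{-1}$ and can be absorbed as a lower-order correction. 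This is the familiar small-sample bias of empirical control-variate coefficients and is the only place where one needs probabilistic reasoning beyond what is already developed in Section \ref{sec:mc}; I expect the authors to suppress this technicality by appealing to the asymptotic equivalence of $\lambda_M^{*,n}$ and $\lambda^*$, exactly as was done for the standard MC bound.
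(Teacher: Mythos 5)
Your proposal follows essentially the same route as the paper: a triangle-inequality split of the error into a pure Monte Carlo sampling term for the control-variate variable $f^{\lambda^*}$, bounded via the variance-reduction identity \eqref{eq:var2} of Theorem \ref{pr:1}, plus a discretization term bounded by \eqref{eq:det}. Your concern about replacing the empirical $\lambda_M^{*,n}$ by the population $\lambda^*$ is handled exactly as you anticipated: the paper states immediately after the proof that the statistical errors from estimating $\lambda^*$ (and the control-variate expectation) are ignored.
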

\begin{proof}
The bound follows from
\begin{eqnarray*}
\nonumber
\|\EE[f](\cdot,t^n)-\tilde{E}^{\lambda^*}_M[f^{n}_{\Delta {\w}}]\|_{{\LHBi}}&& \\
\nonumber
&& \hskip -2cm \leq \|\EE[f](\cdot,t^n)-\tilde{E}^{\lambda^*}_M[f](\cdot,t^n)\|_{{\LHBi}}\\
\\[-.25cm]
\nonumber
&& \hskip -2cm + 
\|\tilde{E}_M^{\lambda^*}[f](\cdot,t^n)-\tilde{E}^{\lambda^*}_M[f_{\Delta \w}^n]\|_{{\LHBi}}\\
\nonumber
&& \hskip -2cm \leq  C(T,f_0)\left\{\sigma_{f^{\lambda_*}}  M^{-1/2}+ \Delta {\w}^q\right\}, 
\end{eqnarray*}
where the Monte Carlo bound in the first term now make use of \eqref{eq:var2} and the second term is bounded by the discretization error of the deterministic scheme.
\end{proof}}

Here we ignored the statistical errors due to the approximation of the control variate expectation and to the estimate of $\lambda^*$. 
Note that, since $\rho^2_{f,\tilde{f}}\to 1$ as $t\to\infty$ the statistical error will  vanish for large times. 
Therefore, the effect of the variance reduction becomes stronger in time and asymptotically the accuracy of the MSCV method depends only on the way the expectation of the control variate is evaluated.

\begin{remark}~
\label{rk:32}
\begin{itemize}
\item
If the collision frequency in the Boltzmann model, and therefore in the control variate BGK model, depends on the random input, then $\nu=\nu(z)$ and \eqref{eq:BGKexas} is no more valid. In this case we can write the expectation of the exact solution as
\be
\tilde{\bf f}(v,t)={\bf f}^{\infty}({\w})+\EE[e^{-\nu t} ({f}_0-{f}^{\infty})](v),
\label{eq:BGKexas2}
\ee
where now the second expectation depends on time and should be estimated during the simulation. Note, however, that for $t\to\infty$ we have $\tilde{\bf f}(v,t)\to {\bf f}^{\infty}({\w})$ and therefore the statistical error of the MSCV method again vanishes in time. 
To estimate the second term in \eqref{eq:BGKexas2} one can use for example
\[
\EE[e^{-\nu t} ({f}_0-{f}^{\infty})](v) \approx E_{M_E}[(e^{-\nu t}-e^{-\bar\nu t}) ({f}_0-{f}^{\infty})](v)+e^{-\bar\nu t}\left({\bf f}_0(v)-{\bf f}^\infty(v)\right)
\]
where $\bar{\nu}=\EE[\nu]$ and $M_E\gg M$.

\item
In a multi-fidelity setting \cite{ZLX} the control variate function computed through the reduced complexity kinetic model (BGK or the stationary state) represents the low fidelity solution whereas the full kinetic model yields the high fidelity solution. 
%Note also that, in the standard Algorithm \ref{alg:3} the high fidelity solution is computed over a grid of $M$ samples independently of the low fidelity model. 
%This is not the case in Algorithm \ref{alg:4} where, at a given time, the number of samples in the high fidelity model depends also on the low fidelity solution.

%This interpretation is at the basis of the extension of the method to space non homogenous situations. 

%\item In the case the underlying uncertainty probability density function $p(\theta)$ is known, the M$^3$C method can be applied without any small scale sampling since the estimate of the expected value reduces to
%\be
%E_{M_n}[f^{n}_{\Delta {\w}}]=\int_{\Omega} f_{\Delta {\w}}^{\infty}(\theta,{\w})p(\theta)d\theta+\frac1{M_n} \sum_{k=1}^{M_n} g^{k,n}_{\Delta {\w}}.
%\label{mcest2b}
%\ee  
%In this case M$^3$C methods achieves arbitrary accuracy for large times.
%\item The estimators produced by the M$^3$C method are obviously monotonic, i.e. mean estimates of nonnegative quantities (such as density) are also positive, the same holds true for the entropy property.
%\item The extension of the M$^3$C method to the non homogeneous case is straightforward, whereas the advantages of the FM$^3$C method depend on the type of problem considered. Applications are reported in Section \ref{sec:6}. For more general cases we refer to \cite{DP} where e detailed discussion is done.
\end{itemize}
\end{remark}

\subsection{The space non homogeneous case}
The main difficulty one has to tackle when extending the MSCV method described above to space non homogeneous problems is that the moments of the solution, which may be necessary to define the function used as a control variate, change in time.  Therefore, they have to be estimated and cannot be computed in advance once for all as in the space homogeneous case. 

Again the idea is to compute the control variate function with a simplified model which can be evaluated at a fraction of the computational cost of the full model.
For example, 
%using a multiscale approach based on a suitable closure of the associated moment equations as an estimator of the equilibrium state of the system. 
%The resulting method will make use of a fine scale sampling to solve the fluid model coupled with a large scale sampling to solve the full kinetic equation.
if we integrate \eqref{eq:FP_general} with respect to the collision invariants $\phi({\w})=1,{\w},|{\w}|^2/2$ (we assume here the classical setting of conservation of mass, momentum and energy) we obtain the coupled system
 {\begin{eqnarray}
\label{eq:smallscale}
\partial_t U +{\rm div}_x
{\mathcal F}(U)+{\rm div}_x \int_{\RR^{d_{\w}}} {\w}\otimes \phi g\,d{\w}&=&0,\\
\frac{\partial}{\partial t} f + {\w} \cdot \nabla_x f &=& \frac1{\varepsilon} Q(f,f),
\label{eq:largescale}
\end{eqnarray} 
with initial data $f(z,x,v,0)=f_0(z,x,v)$.} 

 {In the above system $g=f-f^\infty$, $U=(\rho,\rho u,E)^T$, $\rho$, $u$ and $E$ are the density, mean velocity and energy of the gas defined as 
\be
 \rho=\int_{\RR^{d_{\w}}} f d{\w}, \ u=\frac{1}{\rho}\int_{\RR^{d_{\w}}} {\w}\, f d{\w}, \ E=\frac{1}{2}
 \int_{\RR^{d_{\w}}}|{\w}|^{2} f d{\w},
\label{eq:Mo} \ee
and moreover
\[
\int_{\RR^{d_{\w}}} \phi g d{\w} = 0,\quad {\mathcal F}(U)=\int_{\RR^{d_{\w}}} {\w}\otimes \phi f^\infty\,d{\w}=\left(
\begin{array}{c}
 \rho u   \\
 \rho u \otimes u + pI   \\
  Eu + pu   
\end{array}
\right),\quad \phi({\w})=1,{\w},|{\w}|^2/2,
\]
where $I$ is the $d\times d$ identity matrix, $p=\rho T$ is the pressure and $T=(2E/\rho-|u|^2)/d_v$ the temperature.}
Now, generalizing the space homogeneous method based on the local equilibrium $f_\infty$ as control variate we can consider the Euler closure as control variate, namely to assume $g=0$ in \eqref{eq:smallscale}. 
%
%The 
%
%requires to solve the fluid equations accurately with respect to the random input and the full kinetic model over the Monte Carlo sampling points. The critical aspect concerns the closure of the control variate model defining the coupling strategy which permits to evaluate the high order moment 
%\be
%\int_{\RR^{d_{\w}}} {\w}\, |{\w}|^2 g \,d{\w},
%\label{eq:high}
%\ee 
%over the small scale sampling. If one uses the equilibrium state $f_\infty$ as control variate, the simplest choice is to consider the Euler closure, namely to assume $g=0$ in \eqref{eq:smallscale}.

If we denote by $U_F=(\rho_F,u_F,E_F)^T$ the solution of the fluid model  
\be
\partial_t U_F +{\rm div}_x {\mathcal F}(U_F) = 0,
\label{eq:Euler}
\ee
for the same initial data, and with $f_F^\infty$ the corresponding equilibrium state, the control variate estimate based on $M$ i.i.d. samples reads 
\be
E^{\lambda}_M[f](x,v,t)=\frac1{M} \sum_{k=1}^M f^{k}(x,v,t) - \lambda\left(\frac1{M} \sum_{k=1}^M f_F^{\infty,k}(x,v,t)-{\bf f}_F^{\infty}(x,{\w},t)\right),
\label{eq:nesth1}
\ee
where ${\bf f}_F^{\infty}(x,{\w},t)$ is an accurate approximation of ${\mathbb E}[f_F^{\infty}(\cdot,x,{\w},t)]$. Consistency and unbiasedness of \eqref{eq:nesth1} for any $\lambda\in\RR$ follows again from Lemma \ref{le:1}. 

The fundamental difference is that now the variance of
\[
f^{\lambda}(z,x,v,t)=f(z,x,v,t)-\lambda({f_F}^{\infty}(z,x,{\w},t)-\EE[{f}_F^{\infty}](\cdot,x,{\w},t))
\]
will not vanish asymptotically in time since $f^\infty\neq f_F^\infty$, unless the kinetic equation is close to the fluid regime, namely for small values of the Knudsen number. 

We can state the following
\begin{theorem}
If $\var(f_F^\infty)\neq 0$ the quantity   
\be
\lambda^* = \frac{\cov(f,f_F^\infty)}{\var(f_F^\infty)}
\label{eq:lambdash}
\ee
minimizes the variance of $f^\lambda$ at the point $(x,v,t)$ and gives
\be
\var(f^{\lambda^*}) = (1-\rho_{f,f_F^\infty}^2)\var(f), 
\label{eq:var2h}
\ee
%\bea
%\nonumber
%\var(E^{\lambda}_M[m_\phi(f)])&=&\var(E_M[m_\phi(f)])+\lambda^2 \var(E_M[m_\phi(f^\infty)])\\
%\nonumber
%&&-2\lambda\cov(E_M[m_\phi(f)],E_M[m_\phi(f^\infty)]).
%\eea
%It is immediate to show that  
%\be
%\lambda^* = \frac{\cov(E_M[m_\phi(f)],E_M[m_\phi(f^\infty)])}{\var(E_M[m_\phi(f^\infty)])}
%\ee
%minimizes the variance of the estimator and with this choice
%\[
%\var(E^{\lambda}_M[m_\phi(f)]) = (1-\rho_{f,f^\infty}^2)\var(E_M[m_\phi(f)]) 
%\]
where $\rho_{f,f_F^\infty} \in [-1,1]$ is the correlation coefficient between $f$ and $f_F^\infty$. In addition, we have  
\be
\lim_{\varepsilon\to 0} \lambda^*(x,v,t) =1,\qquad \lim_{\varepsilon\to 0} \var(f^{\lambda^*})(x,v,t)=0\qquad \forall\, (x,v) \in \mathcal{D}\times\RR^{d_v}.
\label{eq:alambda}
\ee
\label{pr:2}
\end{theorem}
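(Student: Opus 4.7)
The first part of the theorem (the variance-minimization identity) proceeds exactly as in Theorem \ref{pr:1}, just with $f_F^\infty$ playing the role of $f^\infty$. My plan is to start from the pointwise decomposition
\[
\var(f^\lambda)(x,v,t) = \var(f) + \lambda^2 \var(f_F^\infty) - 2\lambda\, \cov(f,f_F^\infty),
\]
which follows from bilinearity of covariance applied to $f^\lambda = f - \lambda(f_F^\infty - \EE[f_F^\infty])$ (the deterministic shift $\EE[f_F^\infty]$ has no effect on the variance). Differentiating in $\lambda$ gives the unique stationary point \eqref{eq:lambdash}, and the second derivative $2\var(f_F^\infty)>0$ identifies it as the minimum. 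Substituting $\lambda^*$ back and using the definition $\rho_{f,f_F^\infty} = \cov(f,f_F^\infty)/\sqrt{\var(f)\var(f_F^\infty)}$ yields \eqref{eq:var2h}. The Cauchy–Schwarz inequality applied to $\cov(f,f_F^\infty)$ guarantees $\rho_{f,f_F^\infty}\in[-1,1]$.

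The second part \eqref{eq:alambda} is the substantive new content compared to the homogeneous case. The plan is to invoke the hydrodynamic (Euler) limit of the kinetic equation \eqref{eq:FP_general}: under standard assumptions on $Q$ (conservation of mass, momentum, energy and an H-theorem), as $\varepsilon\to 0$ the solution $f(z,x,v,t)$ converges to a local Maxwellian determined by moments $(\rho,u,E)$ solving the compressible Euler system \eqref{eq:Euler} for the corresponding initial data. Since $f_F^\infty$ is precisely this Maxwellian, we have $f(z,x,v,t)\to f_F^\infty(z,x,v,t)$ pointwise in $z$ as $\varepsilon\to 0$. Hence $\cov(f,f_F^\infty)\to\var(f_F^\infty)$, which forces $\lambda^*\to 1$, and $\rho_{f,f_F^\infty}^2\to 1$, so $\var(f^{\lambda^*})\to 0$ by \eqref{eq:var2h}.

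The hard part is the rigorous justification of the hydrodynamic-limit step: passing to the $\varepsilon\to 0$ limit inside expectations and variances over $z$ requires sufficient regularity and integrability of the random field $f(z,x,v,t;\varepsilon)$ to apply dominated convergence (uniformly in $z$). For the formal statement in the paper, I would simply cite the classical convergence results for the Boltzmann/Vlasov–Fokker–Planck models alluded to in Section \ref{sec:2}, remark that they extend parameterwise in $z$ under the stated regularity assumption on $f_0$, and conclude by dominated convergence. In practice the theorem should be read as a structural statement about the control variate: whenever the kinetic solution approaches its Euler closure (which is exactly what happens in the fluid regime $\varepsilon\ll 1$), the estimator becomes nearly deterministic, and variance reduction is effective precisely in the regime where the low-fidelity surrogate is accurate.
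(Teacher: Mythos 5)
Your proposal is correct and follows essentially the same route as the paper: the first part repeats the argument of Theorem \ref{pr:1} with $f_F^\infty$ in place of $f^\infty$ via the variance decomposition $\var(f^\lambda)=\var(f)+\lambda^2\var(f_F^\infty)-2\lambda\cov(f,f_F^\infty)$, and the limit \eqref{eq:alambda} is obtained from the (formal) fluid limit $Q(f,f)=0$ as $\varepsilon\to 0$, so that $f=f^\infty=f_F^\infty$ and hence $\lambda^*\to 1$, $\var(f^{\lambda^*})\to 0$. Your additional remarks on the rigorous justification (dominated convergence, citing hydrodynamic-limit results) go beyond the paper's formal treatment but do not change the argument.
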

\begin{proof}
The first part of the theorem follows the same lines of Theorem \ref{pr:1} observing that
\[
\var(f^\lambda)=\var(f)+\lambda^2 \var(f_F^\infty)-2\lambda\cov(f,f_F^\infty).
\] 
Now, since as $\varepsilon\to 0$ from \eqref{eq:largescale}
 we formally have $Q(f,f)=0$ which implies $f=f^\infty$ and $f_F^\infty = f^\infty$, from \eqref{eq:lambdash} and \eqref{eq:var2h} we obtain \eqref{eq:alambda}.
\end{proof}
Similarly to the homogeneous case, the generalization to an improved control variate based on a suitable approximation of the kinetic solution can be done with the aid of a more accurate fluid approximation, like the compressible Navier-Stokes system, or a simplified kinetic model. In the latter case, following the approach of Section \ref{sec:BGK} we can solve a BGK model
\be
\frac{\partial}{\partial t} \tilde{f} + {\w} \cdot \nabla_x \tilde{f} = \frac{\nu}{\varepsilon} (\tilde{f}^\infty-\tilde{f}),
\label{eq:BGK}
\ee 
for the same initial data and apply the estimator
\be
\tilde{E}^{\lambda}_M[f](x,v,t)=\frac1{M} \sum_{k=1}^M f^{k}(x,v,t) - \lambda\left(\frac1{M} \sum_{k=1}^M \tilde{f}^{k}(x,v,t)-\tilde{\bf f}(x,{\w},t)\right),
\label{eq:nesth2}
\ee 
where $\tilde{\bf f}(x,{\w},t)$ is an accurate approximation of ${\mathbb E}[\tilde{f}(\cdot,x,{\w},t)]$. 

Now, if $\var(\tilde{f})\neq 0$ the gain in variance reduction obtained using the optimal value 
\be
\lambda^* = \frac{\cov(f,\tilde{f})}{\var(\tilde{f})}
\label{eq:lambdash2}
\ee
depends on the correlation between $f$ and $\tilde{f}$ and Theorem \ref{pr:2} holds true simply replacing $f^\infty_F$ with $\tilde f$.

Of course, solving the control variate models \eqref{eq:Euler} or \eqref{eq:BGK} requires the adoption of a suitable numerical method for the space (and velocity) approximation and, moreover, their accurate expectations have to be computed in time. The fundamental aspect is that, since we avoid the computation of the full kinetic model, simulating the control variate system is much cheaper and therefore its expected value can be evaluated more accurately. 

However, since the computational cost of the control variate as well as its variance are no more negligible we cannot ignore them. In the sequel, we assume that the control variate model is computed over a fine grid of $M_E \gg M$ samples and to use approximations
\[
{\bf f}_F^{\infty}(x,{\w},t)=E_{M_E}[f_F^\infty](x,v,t),\qquad \tilde{\bf f}(x,{\w},t)=E_{M_E}[\tilde f](x,v,t),
\]  
in the estimators \eqref{eq:nesth1} and \eqref{eq:nesth2}.

With the above notations, Algorithm \ref{alg:3} can be extended to the estimator \eqref{eq:nesth2} based on the BGK model \eqref{eq:BGK} as follows.

\begin{algorithm}[Multi-scale Control-Variate (MSCV) method]~
\label{alg:4}
\begin{enumerate}
\item {\bf Sampling}:
\begin{enumerate}
\item Sample $M_E$ i.i.d. initial data $\tilde f_0^k$, $k=1,\ldots,M_E$ from the random initial data $f_0$ and approximate these over the grid characterized by $\Delta x$ and $\Delta v$.
\item Sample $M \ll M_E$ i.i.d. initial data $\tilde f_0^k$, $k=1,\ldots,M$ from the random initial data $f_0$ and approximate these over the grid characterized by $\Delta x$ and $\Delta v$.
\end{enumerate}
\item {\bf Solving}
\begin{enumerate} 
\item For each realization $\tilde f_0^k$, $k=1,\ldots,M_E$ compute the solution of the control variate $\tilde{f}_{\Delta x,\Delta {\w}}^{k,n}$, $k=1,\ldots,M_E$ at time $t^n$ with a suitable deterministic scheme for \eqref{eq:BGK} and denote by
\[
\tilde{\bf f}_{\Delta x,\Delta {\w}}^{n}=\frac1{M_E}\sum_{k=1}^{M_E} \tilde{f}_{\Delta x,\Delta v}^{k,n}.
\] 
\item For each realization $f_0^k$, $k=1,\ldots,M$ 
the underlying kinetic equation (\ref{eq:BH}) is solved numerically by the corresponding deterministic solvers. We denote the solution at time $t^n$ by $f^{k,n}_{\Delta x,\Delta {\w}}$, $k=1,\ldots,M$. 
\end{enumerate} 
\item {\bf Estimating}: 
\begin{enumerate}
\item
Estimate the optimal value of $\lambda^*$ as
\[
{\lambda}_M^{*,n}= \frac{\sum_{k=1}^M (f_{\Delta x,\Delta {\w}}^{k,n}-E_M[f_{\Delta x,\Delta {\w}}^n])(\tilde f_{\Delta x,\Delta {\w}}^{k,n}-E_M[\tilde f_{\Delta x,\Delta {\w}}^n])}{\sum_{k=1}^M (\tilde f_{\Delta x,\Delta {\w}}^{k,n}-E_M[\tilde f_{\Delta x,\Delta {\w}}^n])^2}.
\]
\item
Compute the expectation of the random solution with the control variate estimator
\be
E^{\lambda^*}_{M,M_E}[f^n_{\Delta x,\Delta {\w}}]=\frac1{M} \sum_{k=1}^M f^{k,n}_{\Delta x,\Delta {\w}} - \lambda_M^{*,n}\left(\frac1{M} \sum_{k=1}^M \tilde{f}_{\Delta x,\Delta {\w}}^{k,n}-\tilde{\bf f}_{\Delta x,\Delta {\w}}^{n}\right).
\label{mcest2b}
\ee
%\be
%E_{M,M_E}[f^{n}_{\Delta {\w}}]=\frac1{M_E} \sum_{k=1}^{M_E} f_{\Delta {\w}}^{\infty,k}+\frac1{M} \sum_{k=1}^M g^{k,n}_{\Delta {\w}}.
%\label{mcest2}
%\ee
\end{enumerate}
\end{enumerate}
\end{algorithm}
If, for simplicity, we assume that the deterministic error in space and velocity of the control variate solver is of the same order as that of the full model solver, by the same arguments in \cite{MSS, MS2}, we obtain the following error estimate 
 {
\begin{proposition}
Consider a deterministic scheme which satisfies \eqref{eq:det} for the solution of the kinetic equation of the form \eqref{eq:FP_general} with deterministic interaction operator $Q(f,f)$ and random initial data $f(\theta,x,{\w},0)=f_0(\theta,x,{\w})$. 
Assume that 
%all realization of the random initial data $f_0(z,x,v)$ are supported in space on one common bounded domain ${\cal D}\subset \RR^{d_x}$ and that 
the initial data is sufficiently regular.
Then, the MSCV estimator \eqref{mcest2b} gives the following bound
\bea
\nonumber
&&\|\EE[f](\cdot,t^n)-{E}^{\lambda^*}_{M,M_E}[f^{n}_{\Delta x,\Delta {\w}}]\|_{\LLBi}\\[-.2cm]
\label{eq:errHMMC2}
\\
\nonumber
&& \hskip 4cm \leq {C}\left\{\sigma_{f^{\lambda_*}} M^{-1/2}+\tau_{f^{\lambda_*}} M_E^{-1/2}+\Delta {x}^p+\Delta {\w}^q\right\} 
\eea
where $\sigma_{f^{\lambda_*}}=\| (1-\rho^2_{f,\tilde{f}})^{1/2}\var(f)^{1/2}\|_{\LL}$, $\tau_{f^{\lambda_*}}=\| \rho_{f,\tilde{f}}\var(f)^{1/2}\|_{\LL}$, 
the constant $C=C(T, f_0)>0$ depends on the final time $T$ and the initial data $f_0$. 
\end{proposition}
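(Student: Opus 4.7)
The plan is to follow the same triangle-inequality decomposition as in the two earlier propositions, writing
\[
\EE[f](\cdot,t^n)-E^{\lambda^*}_{M,M_E}[f^n_{\Delta x,\Delta \w}] = A+B,
\]
where $A=\EE[f](\cdot,t^n)-E^{\lambda^*}_{M,M_E}[f](\cdot,t^n)$ is the statistical error of the MSCV estimator evaluated on the exact kinetic trajectories, and $B=E^{\lambda^*}_{M,M_E}[f](\cdot,t^n)-E^{\lambda^*}_{M,M_E}[f^n_{\Delta x,\Delta \w}]$ is the propagation of the deterministic discretization error through the estimator. Bounding $A$ and $B$ separately in the $\LLBi$ norm and adding yields the four terms in \eqref{eq:errHMMC2}.

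For $A$, the key is that in Step 1 of Algorithm \ref{alg:4} the $M_E$ samples producing $\tilde{\mathbf{f}}_{\Delta x,\Delta \w}^n$ are drawn independently from the $M$ samples producing $E_M[f]$ and $E_M[\tilde f]$. Introducing the optimally weighted random variable $f^{\lambda^*}=f-\lambda^*(\tilde f-\EE[\tilde f])$, which satisfies $\EE[f^{\lambda^*}]=\EE[f]$, a direct rearrangement gives
\[
\EE[f]-E^{\lambda^*}_{M,M_E}[f] = \bigl(\EE[f^{\lambda^*}]-E_M[f^{\lambda^*}]\bigr) + \lambda^*\bigl(\EE[\tilde f]-E_{M_E}[\tilde f]\bigr),
\]
whose two summands are zero-mean and mutually independent. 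Pointwise in $(x,\w)$ the mean-square error therefore splits additively into $\var(f^{\lambda^*})/M + (\lambda^*)^2\var(\tilde f)/M_E$, and by the optimality of $\lambda^*$ from Theorem \ref{pr:2} (applied with $\tilde f$ in place of $f_F^\infty$, as noted right after \eqref{eq:lambdash2}) this simplifies to $(1-\rho_{f,\tilde f}^2)\var(f)/M + \rho_{f,\tilde f}^2\var(f)/M_E$. Applying $\sqrt{a+b}\leq\sqrt a+\sqrt b$ and then the triangle inequality in the $\LL$ norm produces precisely
\[
\|A\|_{\LLBi} \leq \sigma_{f^{\lambda_*}} M^{-1/2} + \tau_{f^{\lambda_*}} M_E^{-1/2}.
\]

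For $B$, I would apply the deterministic bound \eqref{eq:det} pathwise to each sample of $f$ and $\tilde f$ entering $E_M$ and $E_{M_E}$, assuming the scheme used for the BGK control variate meets the same accuracy orders $p,q$. The linearity of the estimator together with the a priori boundedness of $\lambda^*$ (controlled by $|\rho_{f,\tilde f}|\leq 1$ and the tacitly assumed bound on $\var(f)^{1/2}/\var(\tilde f)^{1/2}$ on the simulation region) yields $\|B\|_{\LLBi}\leq C(T,f_0)(\Delta x^p+\Delta \w^q)$. Summing with the bound on $A$ and absorbing prefactors into $C$ produces \eqref{eq:errHMMC2}.

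The main obstacle, which the authors explicitly waive, is that the algorithm uses the plug-in estimator $\lambda_M^{*,n}$ rather than the true optimal $\lambda^*(x,\w,t^n)$: a rigorous treatment would require a uniform concentration inequality for the ratio of empirical covariance to empirical variance, which in the velocity phase space is delicate because $\var(\tilde f)$ can be arbitrarily small on the tail. All other steps are routine adaptations of the argument already presented in the two earlier propositions.
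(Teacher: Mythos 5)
Your argument is correct, and it reaches the stated bound with the same ingredients (triangle inequality, Monte Carlo variance bounds, the deterministic estimate \eqref{eq:det} applied also to the control-variate solver), but via a different split than the paper. The paper inserts the intermediate quantity ${E}^{\lambda^*}_{M}[f^{n}_{\Delta x,\Delta \w}]$, i.e.\ the estimator with the \emph{exact} control-variate expectation but discretized solutions: the first piece is then bounded by reusing the homogeneous-case estimate \eqref{eq:errHMMC}, and the second piece is exactly $\lambda_M^{*,n}\left(\EE[\tilde f^n_{\Delta x,\Delta v}]-E_{M_E}[\tilde f^n_{\Delta x,\Delta v}]\right)$, which yields the $\tau_{f^{\lambda_*}}M_E^{-1/2}$ term through the identity $\lambda^*\var(\tilde f)^{1/2}=\rho_{f,\tilde f}\var(f)^{1/2}$ from \eqref{eq:lambdash2}. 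You instead insert ${E}^{\lambda^*}_{M,M_E}[f](\cdot,t^n)$ evaluated on exact trajectories, separating statistical from discretization error, and you bound the full statistical error in one step by exploiting the independence of the $M$ and $M_E$ sample sets: your pointwise mean-square error $(1-\rho^2_{f,\tilde f})\var(f)/M+\rho^2_{f,\tilde f}\var(f)/M_E$ is precisely the variance formula of Remark \ref{rk:33} evaluated at $\lambda=\lambda^*$, so your route makes explicit (and slightly sharpens, before applying $\sqrt{a+b}\le\sqrt{a}+\sqrt{b}$) what the paper obtains by two successive triangle inequalities, at the price of having to argue the uniform boundedness of $\lambda^*$ when propagating the discretization error through the estimator — a point the paper leaves implicit as well. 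Both treatments waive the same issues, which you correctly flag: the statistical error of the plug-in $\lambda_M^{*,n}$ and the assumption that the BGK control-variate solver has the same accuracy orders $p,q$.
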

\begin{proof}
We have
\begin{eqnarray*}
\nonumber
\|\EE[f](\cdot,t^n)-{E}^{\lambda^*}_{M,M_E}[f^{n}_{\Delta x,\Delta {\w}}]\|_{\LLBi} &&\\
&& \hskip -2.5cm\leq\|\EE[f](\cdot,t^n)-{E}^{\lambda^*}_{M}[f^{n}_{\Delta x,\Delta {\w}}]\|_{\LLBi} \\
\nonumber
&&\hskip -2.5cm+\|{E}^{\lambda^*}_{M}[f^{n}_{\Delta x,\Delta {\w}}]-{E}^{\lambda^*}_{M,M_E}[f^{n}_{\Delta x,\Delta {\w}}]\|_{\LLBi}\\
&&\hskip -2.5cm = I_1+I_2. 
\end{eqnarray*}
Since $\EE[f^{\lambda^*}]=\EE[f]$, the first term $I_1$ can be bounded similarly to \eqref{eq:errHMMC} to get \begin{eqnarray*}
\nonumber
&&\|\EE[f^{\lambda^*}](\cdot,t^n)-{E}^{\lambda^*}_{M}[f^{n}_{\Delta x,\Delta {\w}}]\|_{\LLBi} \\
&&\hskip 4cm \leq C_1(T, f_0)\left\{\sigma_{f^{\lambda_*}} M^{-1/2}+\Delta {x}^p+\Delta {\w}^q\right\}. 
\end{eqnarray*}
Using the fact that from \eqref{mcest2} and \eqref{mcest2b} we have
\[
{E}^{\lambda^*}_{M}[f^{n}_{\Delta x,\Delta {\w}}]-{E}^{\lambda^*}_{M,M_E}[f^{n}_{\Delta x,\Delta {\w}}] = \lambda_M^{*,n} \left(\EE[{\tilde f}^n_{\Delta x,\Delta v}]-E_{M_E}[\tilde f^n_{\Delta x,\Delta v}]\right).
\]
Now, ignoring, as before, the statistical error in estimating $\lambda^{*}$ and using \eqref{eq:lambdash2} the second term $I_2$ can be bounded by
\begin{eqnarray*}
\nonumber
&&\|{E}^{\lambda^*}_{M}[f^{n}_{\Delta x,\Delta {\w}}]-{E}^{\lambda^*}_{M,M_E}[f^{n}_{\Delta x,\Delta {\w}}]\|_{\LLBi}\\
&& \hskip 4cm \leq C_2(T, f_0)\left\{\tau_{f^{\lambda_*}} M_E^{-1/2}+\Delta {x}^p+\Delta {\w}^q\right\}. 
\end{eqnarray*}
\end{proof}}
From \eqref{eq:lambdash2} we have that  $\rho^2_{f,\tilde{f}}\to 1$ as $\varepsilon\to 0$, therefore from \eqref{eq:errHMMC2} in the fluid limit we recover the statistical error of the fine scale control variate model.

\begin{remark}~
\label{rk:33}
\begin{itemize} 
%\item Solving the control variate model (for example the fluid equations or a BGK approximation) requires the adoption of a suitable numerical method. The fundamental aspect is that, avoiding the computation of the collision operator, the solution is obtained with a strong reduction of computational cost and can be evaluated accurately in the space of random inputs. 
%If the probability distribution of the random input is not known or lacks of regularity, this can be done using a very fine sampling grid, otherwise one can directly use an existing code based on stochastic orthogonal polynomials.
\item For multi-dimensional simulations, the approach can be simplified by computing the optimal value of $\lambda$ with respect to some moment or polynomial norm of the solution. 
For example, since we typically are interested in the evolution of the moments, one can eventually compute the optimal value of $\lambda$ with respect to a given moment like
\be
\lambda_{\phi}^* = \frac{\cov(m_{\phi}(f),m_{\phi}(f_F^\infty))}{\var(m_{\phi}(f_F^\infty))}
\label{eq:momcv}
\ee
so that $\lambda^*_{\phi}=\lambda^*_{\phi}(x,t)$. 
In this way the value of $\lambda^*$ used in the control variate, being independent of the velocity, is sub-optimal in terms of minimizing the variance of the solution but the storage requirements are strongly reduced.
\item Note that, by the central limit theorem we have
\[
\var(E_M[f])=M^{-1}\var(f),
\]
and therefore, using the independence of the estimators $E_M[\cdot]$ and $E_{M_E}[\cdot]$, the total variance of the estimator \eqref{mcest2b} for a general $\lambda$ is 
\bea
\nonumber
\var(E^{\lambda}_{M,M_E}[f]) &=&
M^{-1}\var(f-\lambda\tilde f)+M_E^{-1}\var(\lambda\tilde f)\\
\nonumber
&= & M^{-1}\left(\var(f)-2\lambda\cov(f,\tilde f)\right)+(M^{-1}+M_E^{-1})\lambda^2\var(\tilde f).
\eea
Minimizing the above quantity with respect to $\lambda$ yields the optimal value
\[
\tilde\lambda^* = \frac{M_E}{M+M_E}\lambda^*,
\]
where $\lambda^*$ is given by \eqref{eq:lambdash2}.
If we denote with $\C(f)$ the cost of the full model and with $\C({\tilde f})$ the cost of the control variate then the total cost of the estimator is $M \C(f) + M_E \C({\tilde f})$. Fixing a given cost for both models $M \C(f) = M_E \C({\tilde f})$, we also obtain 
\[
\tilde \lambda^* = \frac{\C(f)}{\C({f})+\C({\tilde f})}\lambda^*.
\]
This correction may be relevant in the cases when $\C(f)$ and $\C(\tilde f)$ do not differ too much.  
In our setting, however, $\C(f) \gg \C(\tilde f)$ (or equivalently  $M_E \gg M$) so that we can assume $\tilde \lambda^* \approx \lambda^*$.
%the variance is minimized taking
%\[
%\frac{M}{M_E} = \sqrt{\frac{\var(f-\lambda f)\,\C(\tilde f)}{\var(\lambda \tilde f)\, \C(f)}}. 
%\]
%The above indentity gives a practical criteria for choosing $M_E$ for a given $M$ in numerical simulations. 
%\item 
%There are several ways to additionally improve the efficiency of the method. For example, using different control variates strategies based on different control variate models in various regions of the computational domain. 
%Similarly, following the same methodology as in Section \ref{sec:mcv}, the method can include the use of multiple control variates. We leave further insights to the numerical section. 
\end{itemize}
\end{remark}

\section{Numerical examples}
\label{sec:4}
In this Section, we discuss several numerical examples with the aim of illustrating
the characteristics of the control variate strategies described in the previous Sections.
Due to the relevance in applications and the intrinsic difficulties of its numerical solution we focus our attention on the challenging case of the Boltzmann equation \eqref{eq:FP_general} with collision integral \eqref{eq:Boltzmann}. Of course, the same methodology can be applied to a large class of kinetic equations. 

We start by considering the space homogeneous case with uncertainties 
which permits an in depth analysis of the behaviors of the different
estimators and the different variance reduction techniques. In this case the control variates used have no impact over the simulation cost, therefore, we can assume to know with arbitrary accuracy their expected values. Subsequently, we will consider space non homogeneous problems with randomness in the initial data, in the collision frequency and in the boundary conditions. Here, the computational cost of the control variate model used plays a relevant role and we discuss in details the relationships between computational cost and performances of the methods.

%In particular, we stress that despite the distribution $p(z)$ has been chosen, the approach proposed does not require its direct knowledge and in the second part of this Section, when we deal with non homogeneous situations, we operate as if $p(z)$ is not analytically known. In particular, we will stress the link between computational cost and performances of the methods proposed. 
%
%
%
%The uncertainty in this case comes from the initial data and from the frequency of collision, i.e.
%from the collision kernel. This will lead to different choices for the control variate.
%Successively, we will consider space non homogeneous problems with randomness again in the initial data and in the collision frequency. For this non homogeneous case, we will also consider randomness in the boundary conditions. In the sequel, and for simplicity, the random variable $z$ chosen has a distribution $p(z)$ which is uniform in $[-1,1]$. We stress that nothing changes in the method proposed if other distributions are chosen instead. 
%
%In particular, we also stress that despite the distribution $p(z)$ has been chosen, the approach proposed does not require its direct knowledge and in the second part of this Section, when we deal with non homogeneous situations, we operate as if $p(z)$ is not analytically known. In particular, we will stress the link between computational cost and performances of the methods proposed.

\subsection{Homogeneous Boltzmann equation}
%We consider the space homogeneous Boltzmann equation (\ref{eq:BH}) with Maxwellian collision kernel \cite{}. The velocity space is two dimensional $d_v=2$ and the collision kernel $B, while the molecules are of Maxwellian type (namely the collision kernel  . 
We consider the space homogeneous Boltzmann equation (\ref{eq:BH}). The velocity space is two dimensional $d_v=2$, the velocity domain is truncated to $[-v_{\min},v_{\max}]^2$ and discretized with $N_v=64^2$ points. The collision integral is discretized by the fast spectral algorithm \cite{MP,DP15} and the time integration has been performed with a $4$-th order Runge-Kutta method. We restrict ourselves to the analysis of the methods regardless of a computational cost analysis. In fact, the cost of both the estimators considered, i.e. the one based on the equilibrium distribution and the one based on the BGK model, is negligible as discussed in Remark \ref{rk:31}. In the special case in which the randomness comes from the collision kernel we refer to Remark \ref{rk:32} for the computation of the BGK control variate.

\subsubsection{Test 1. Uncertain initial data}
The initial condition is a two bumps problem with uncertainty
\be
f_0(z,v)=\frac{\rho_0}{2\pi} \left(\exp\left(-\frac{|v-(2+sz)|^2}{\sigma}\right)+\exp\left({-\frac{|v+(1+sz)|^2}{\sigma}}\right)\right)
\ee  
with $s=0.2$, $\rho_0=0.125$, $\sigma=0.5$ and $z$ uniform in $[0,1]$. We choose $v_{\min}=v_{\max}=16$. We perform two different computations, with $\Delta t=1$ and final time $T_f=70$ to observe the long time behavior of the solution and with $\Delta t=0.05$ and $T_f=10$ to detail the first part of the relaxation process. 
In Figure \ref{Figure1}, we report the expectation of the initial data, the expectation of the final equilibrium state and their difference. 
%These values have been computed by a stochastic Monte Carlo technique where the number of samples $M_c\rightarrow \infty$. 

%Figure \ref{Figure1bis} reports the error for a standard Monte Carlo method. The different lines indicates the error at different times. We see that to reach an error of less than $10^{-3}$ one needs $10^{5}$ samples. 
%This is still far from the precisions with which we are able to compute the solution with our control variate approaches with a much lower number of samples for the full Boltzmann model as shown below. 
\begin{figure}[ht!]
	\begin{center}
		\includegraphics[width=0.47\textwidth]{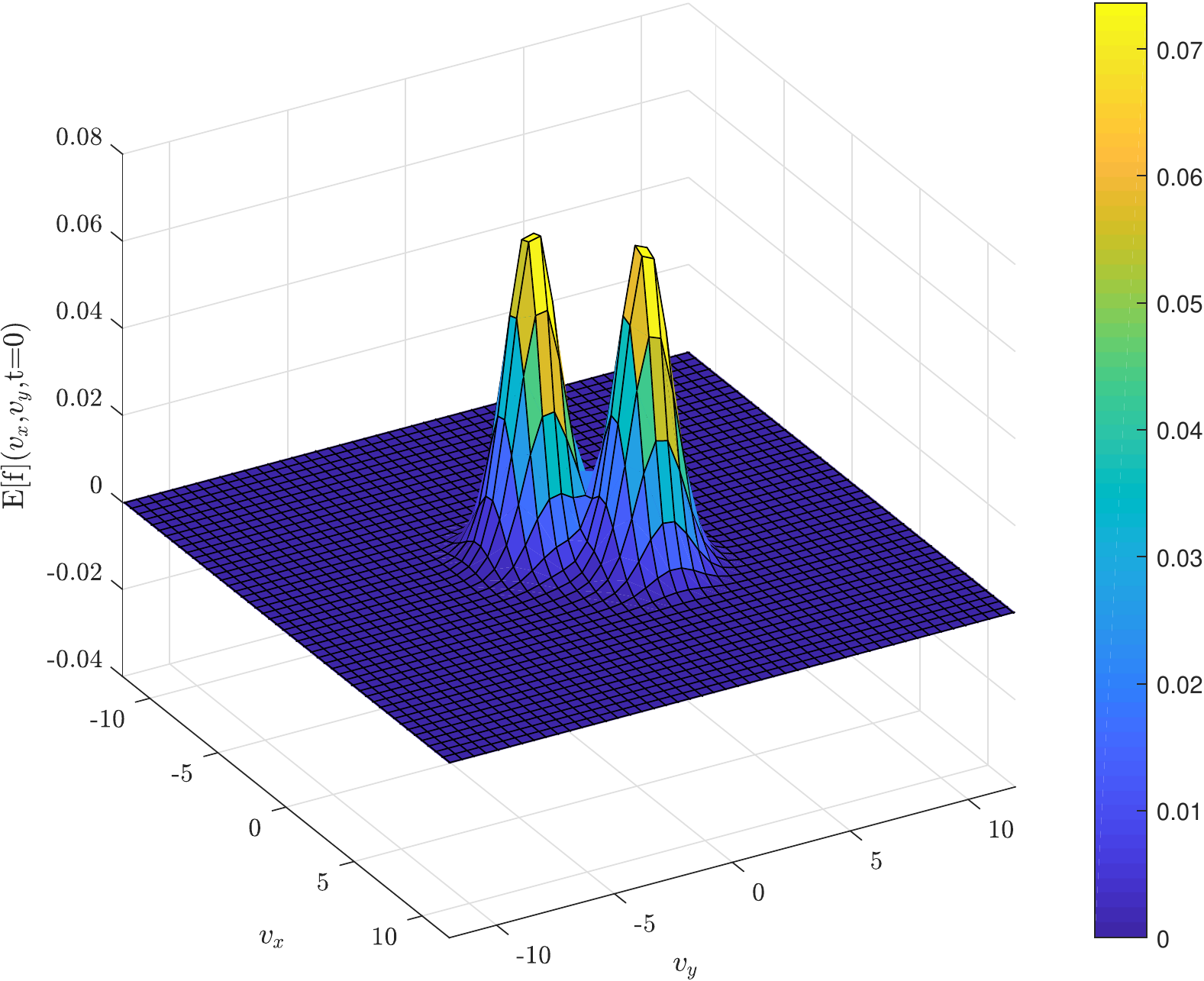}\hspace{0.5cm}
		\includegraphics[width=0.47\textwidth]{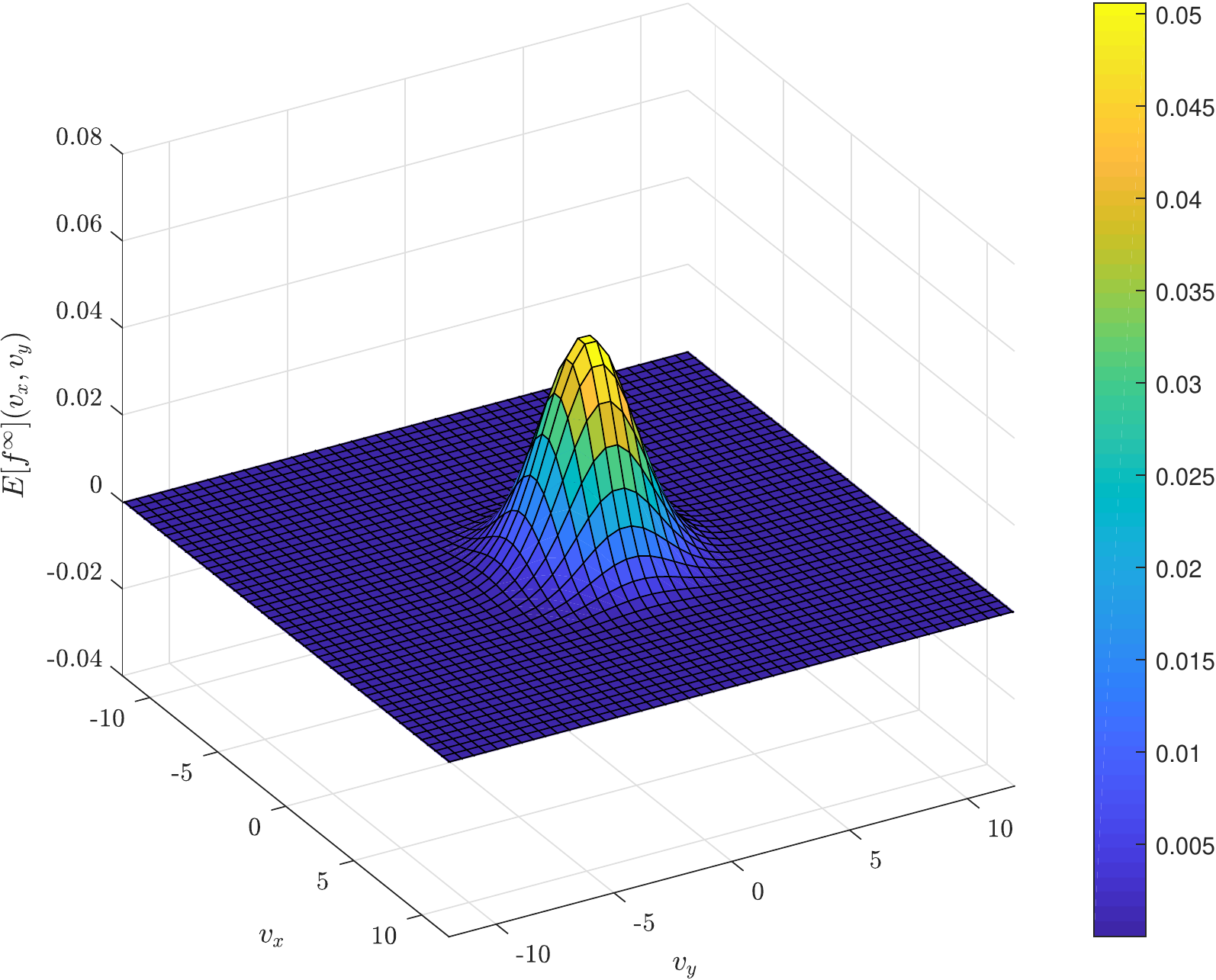}\\
		\vspace{1cm}
		\includegraphics[width=0.47\textwidth]{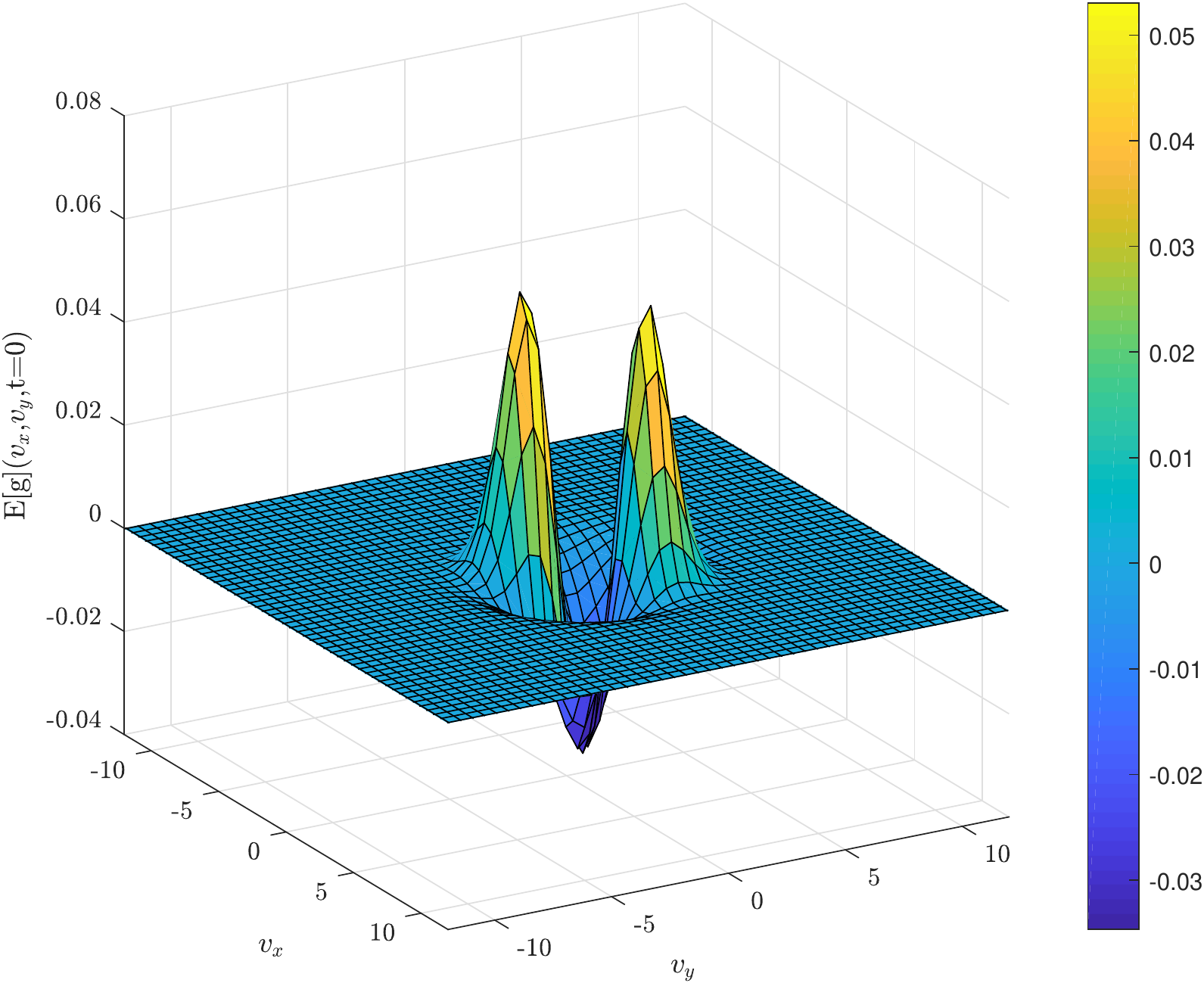}
		\captionof{figure}{Test 1. Top left: Expectation of the initial distribution $\EE[f_0](v)$. Top right: Expectation of the equilibrium distribution $\EE[f^\infty](v)$. Bottom: Expectation of $\EE[g_0](v)=\EE[f_0-f^\infty](v)$.}
		\label{Figure1}
	\end{center}
\end{figure}
%\begin{figure}[ht!]
%	\begin{center}
%		\includegraphics[width=0.49\textwidth]{Figure/test1/ErrorvsM-eps-converted-to.pdf}
%		\captionof{figure}{Test 1. $L_2$ norm of the error for the Monte Carlo method at various times as a function of the number of samples. The case $t=+\infty$ in the legend refers to the case $f=f^{\infty}$.}
%		\label{Figure1bis}
%	\end{center}
%\end{figure}

%In Figure \ref{Figure2}, we report the error for the various methods 
%expected value of the distribution function. In all images are reported the standard Monte Carlo approach (MC in the following) and the Multi Scale control variate methods (MSCV in the following). The control variates considered are two. The first is the equilibrium state ${\bf f}^{\infty}({\w})$ while the second is the time dependent control variate given by the BGK equation $\tilde{\bf f}(v,t)$. For both cases, we consider $\lambda=1$, i.e. the control variate strategy based on the Micro-Macro decomposition and $\lambda^*$, i.e. the optimal one. 
Next, in Figure \ref{Figure2} we consider the $L_2$ error with respect to the random variable and the $L_1$ error in the velocity field for the various methods in the computation of the expected value for the distribution function $\EE[f](v,t)$.
On the bottom, we report the long time behavior while on the top a magnification of the numerical solution at the beginning of the relaxation process. The number of samples used to compute the expected solution for the Boltzmann equation is $M=10$ (left images) and $M=100$ (right images). 
%Again the reference solution is computed by letting the number of samples goes to infinity. The same holds true for the number of samples used for computing the variables of the control variate approach, in fact in this case as already stated this can be done at a negligible cost and can be considered an off line computation. 
\begin{figure}[ht!]
	\begin{center}
		\includegraphics[width=.45\textwidth]{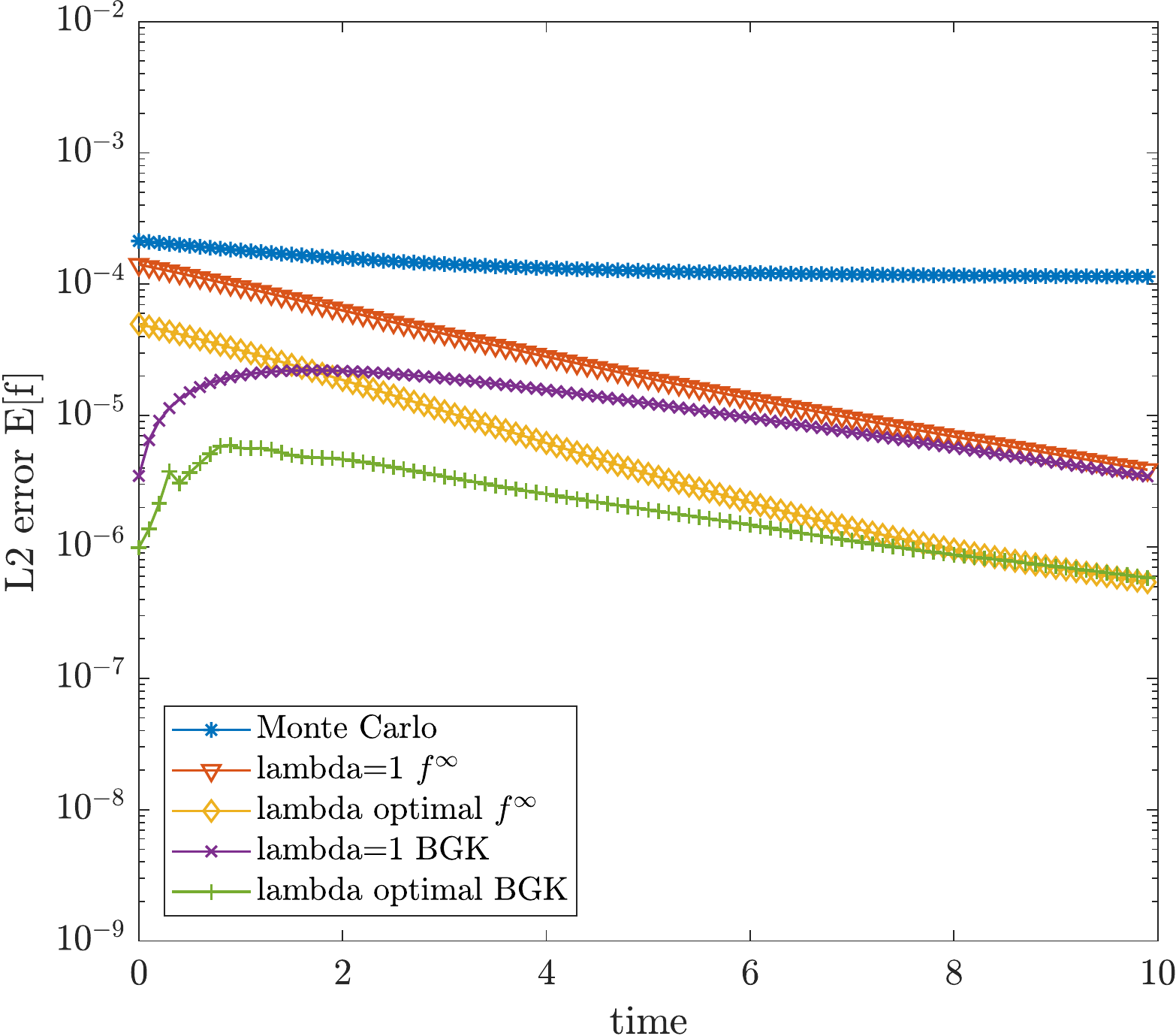}\hspace{1cm}
		\includegraphics[width=.45\textwidth]{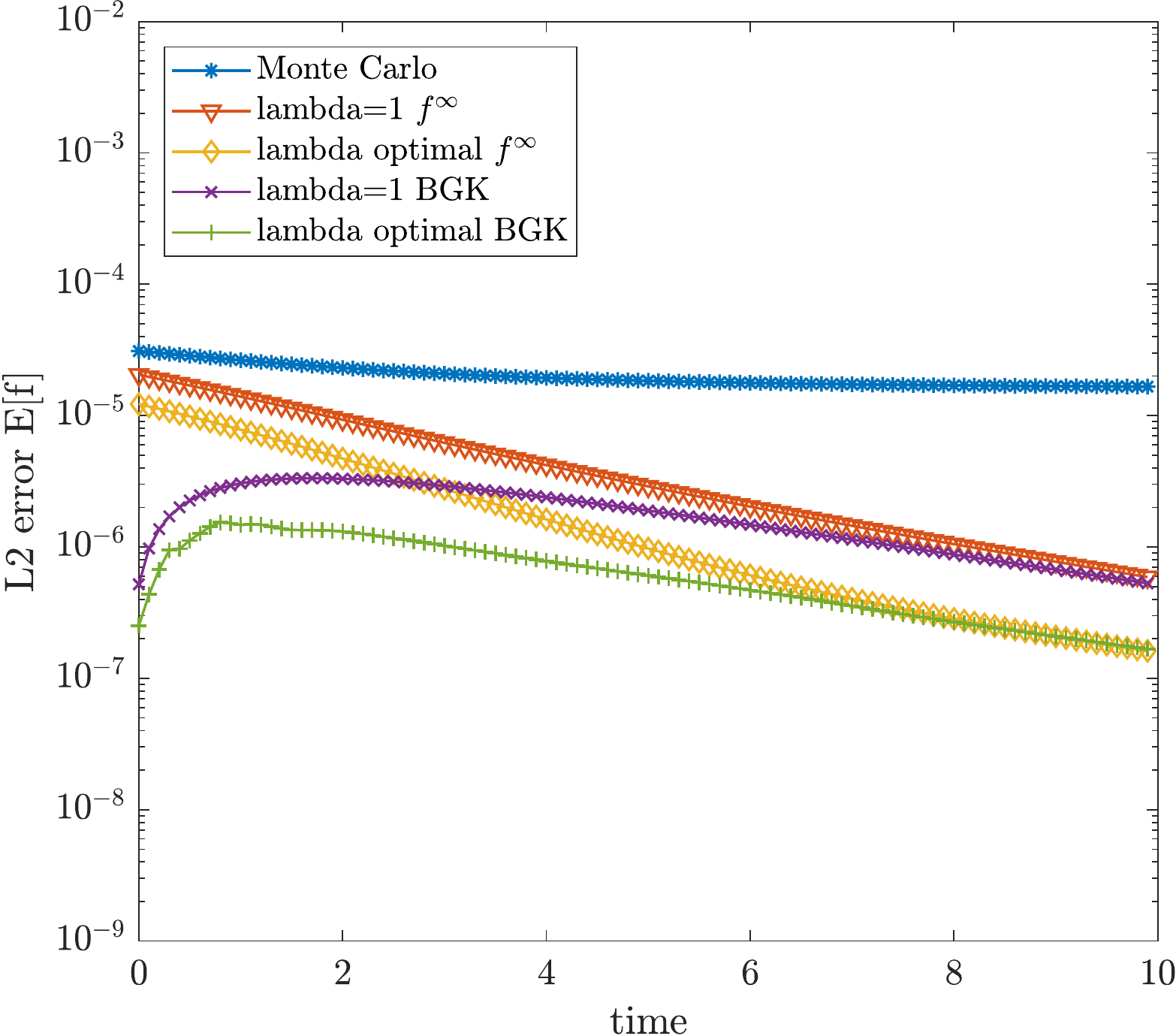}\\
		\vspace{1cm}
		\includegraphics[width=.45\textwidth]{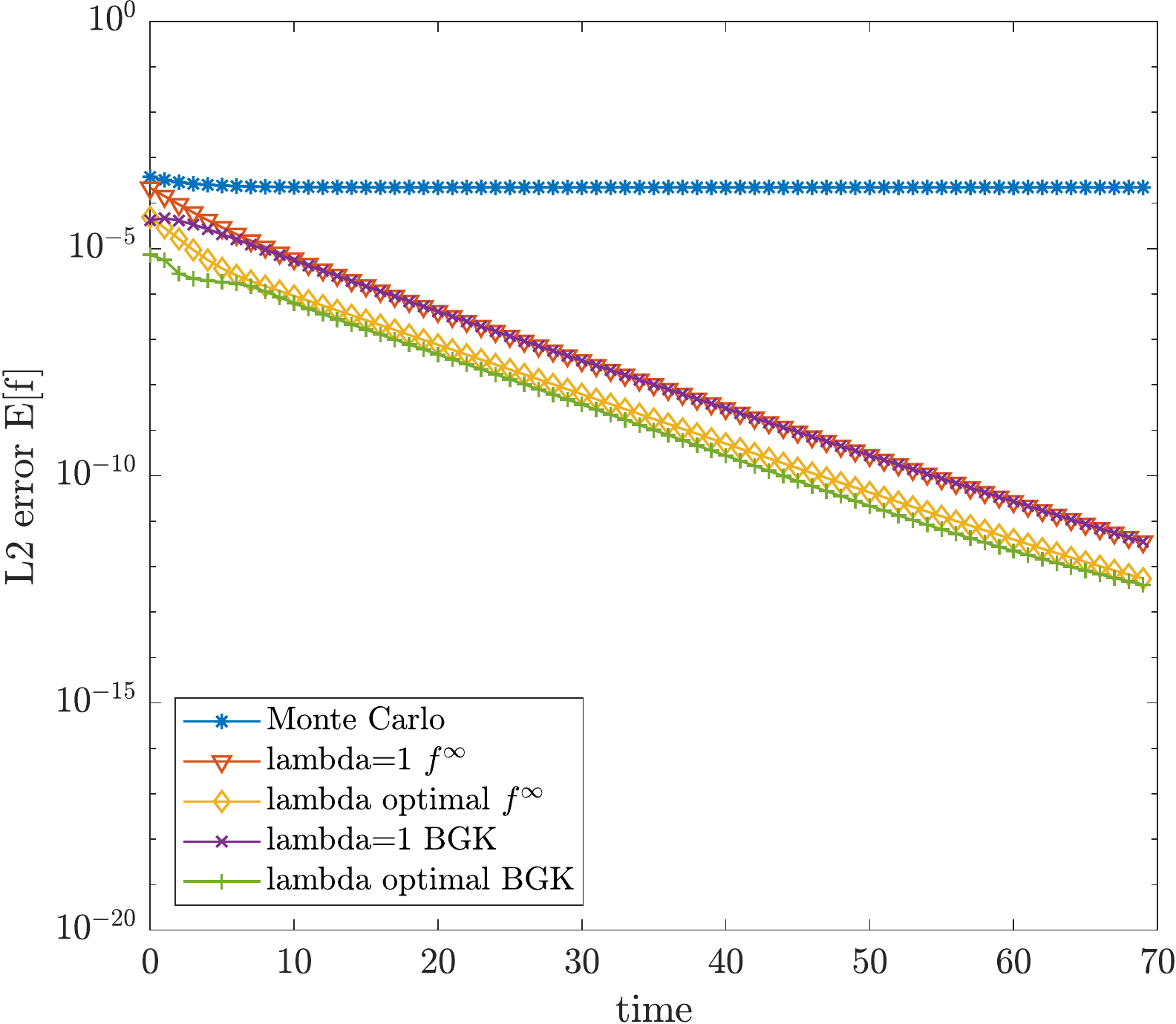}\hspace{1cm}
		\includegraphics[width=.45\textwidth]{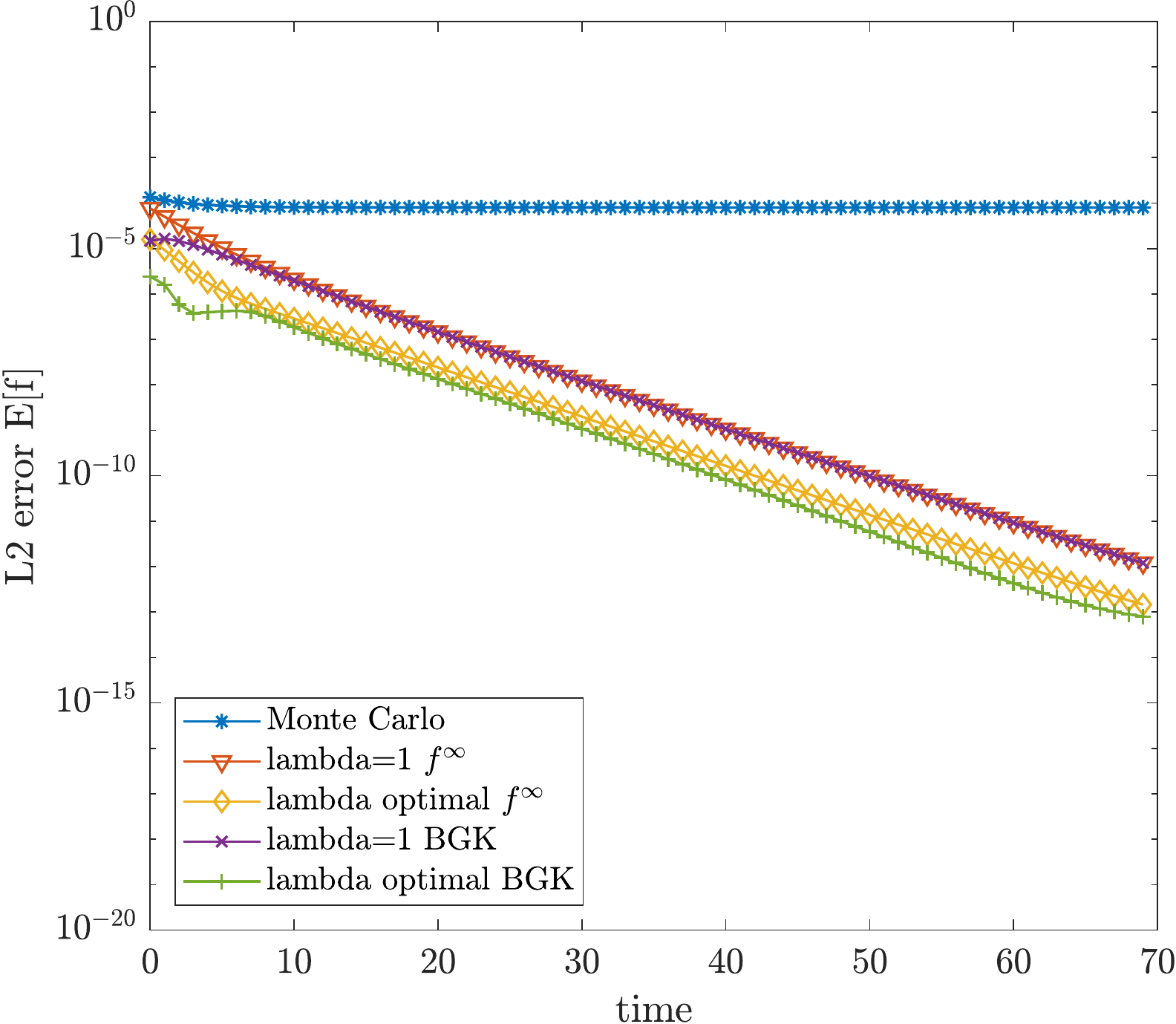}
		\captionof{figure}{Test 1. $L_2$ norm of the error for the expectation of the distribution function for the Monte Carlo method and the MSCV method for various control variates strategies. Top left: $M=10$ samples and long time behavior. Top right: $M=100$ samples  and magnification for $t\in[0,10]$. Bottom left: $M=10$ samples and long time behavior. Bottom right: $M=100$ samples and magnification for $t\in[0,10]$.}\label{Figure2}
	\end{center}
\end{figure}

The gain in accuracy obtained with the MSCV methods is of several orders of magnitudes with respect to standard Monte Carlo. In particular, the case of optimal $\lambda^*$ and BGK control variate approach is the one which gives the best results during all the time evolution of the solution. Note that, thanks to the properties of MSCV methods asymptotically the solution errors are close to machine precision. 

In Figure \ref{Figure4} we report the shape of the optimization coefficient $\lambda^*(v,t)$ at different times in the case of the BGK control variate strategy.
%, i.e. in the case in which the control variate is furnished by the solution of the BGK equation. 
\begin{figure}[ht!]
	\begin{center}
		\includegraphics[width=.48\textwidth]{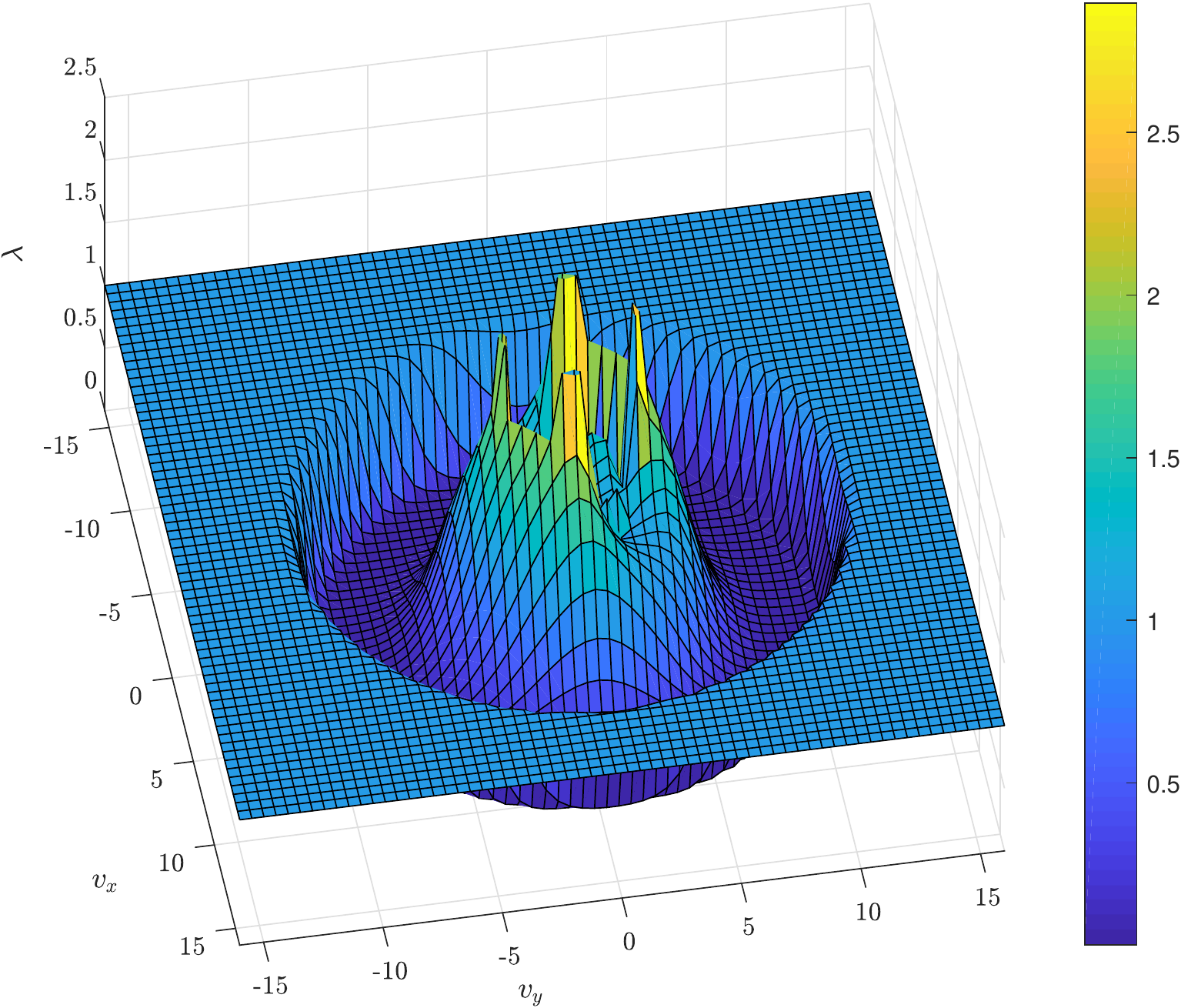}\hspace{0.5cm}
		\includegraphics[width=.48\textwidth]{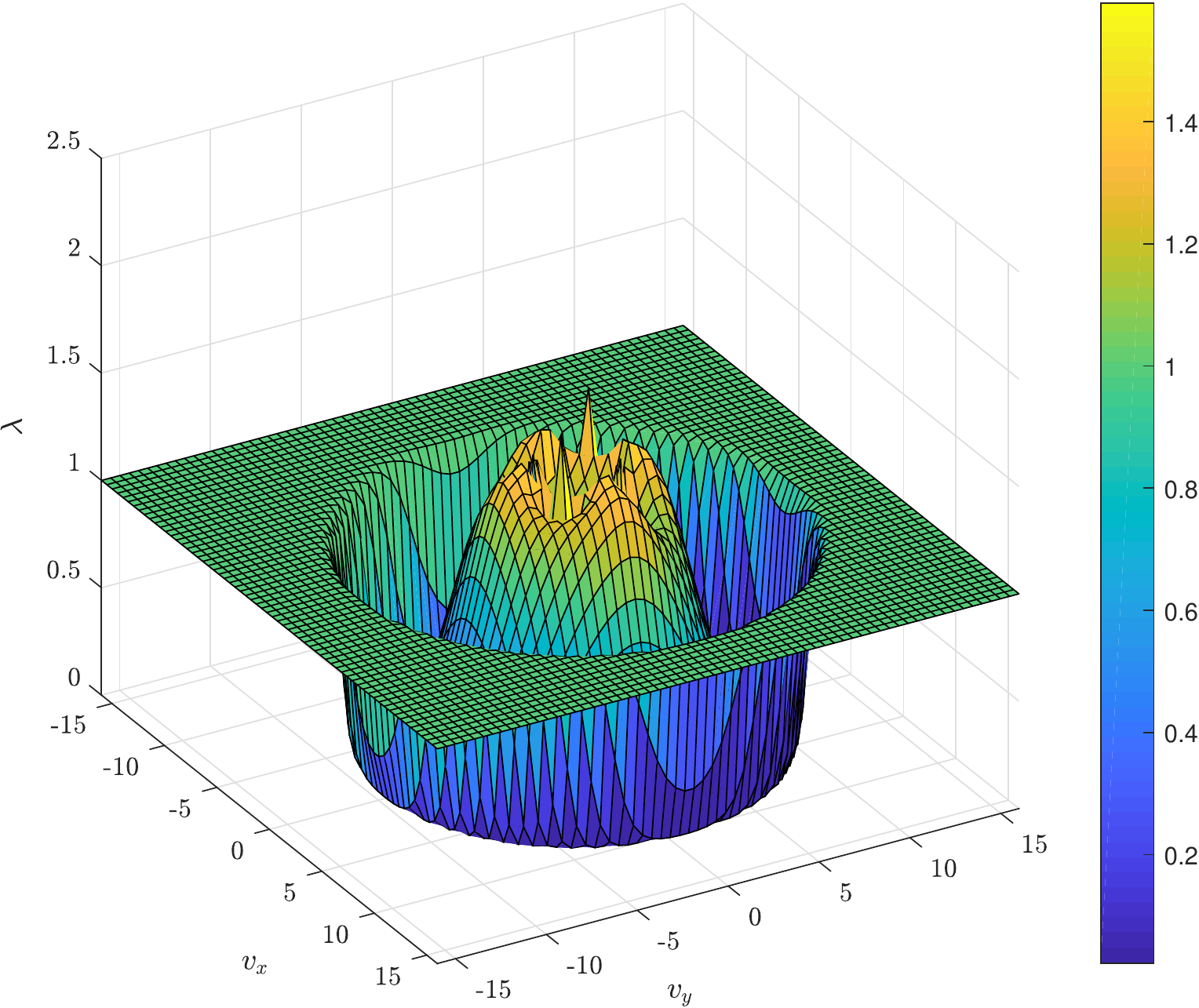}\\
		\vspace{1cm}
		\includegraphics[width=.48\textwidth]{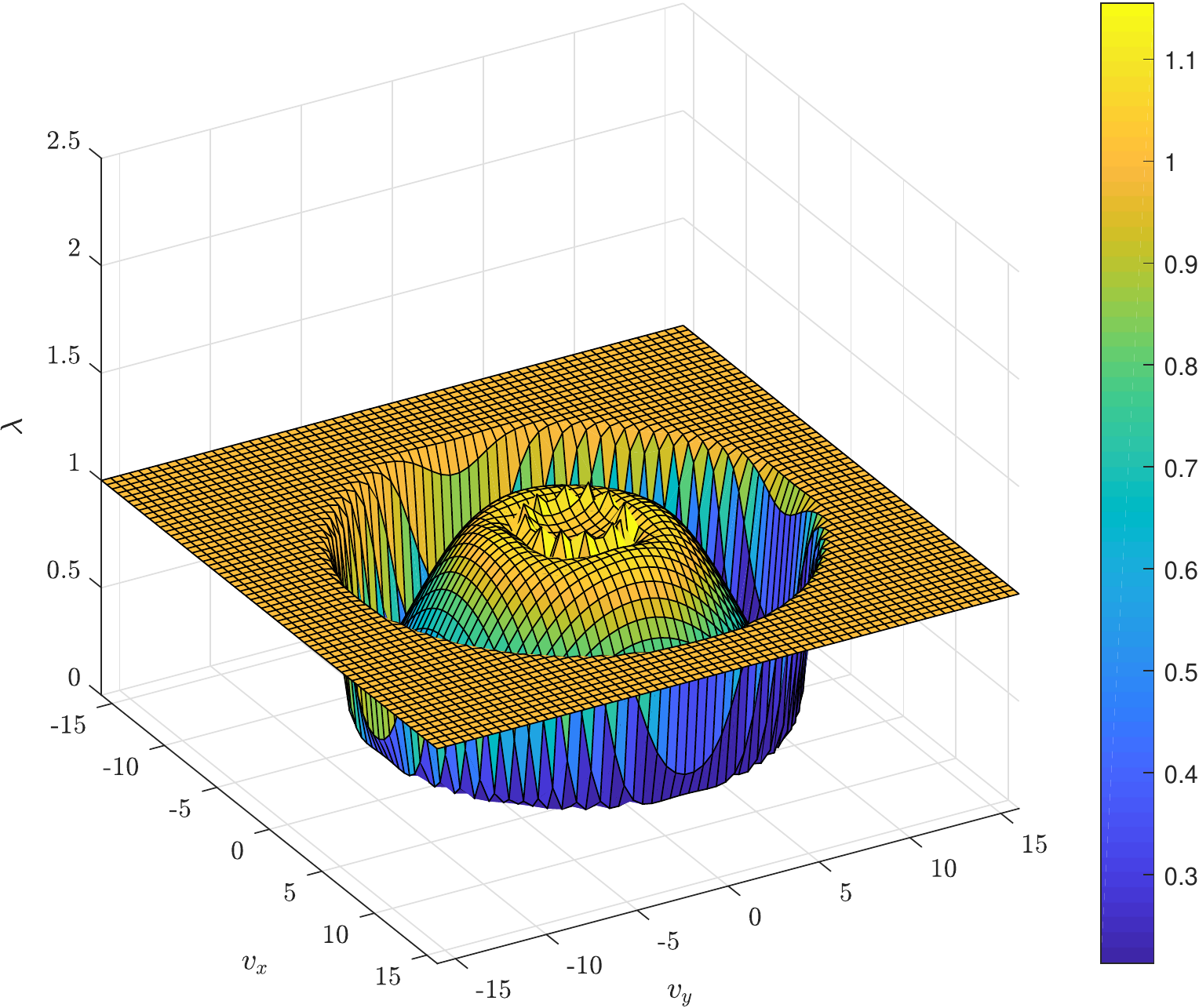}\hspace{0.5cm}
		\includegraphics[width=.48\textwidth]{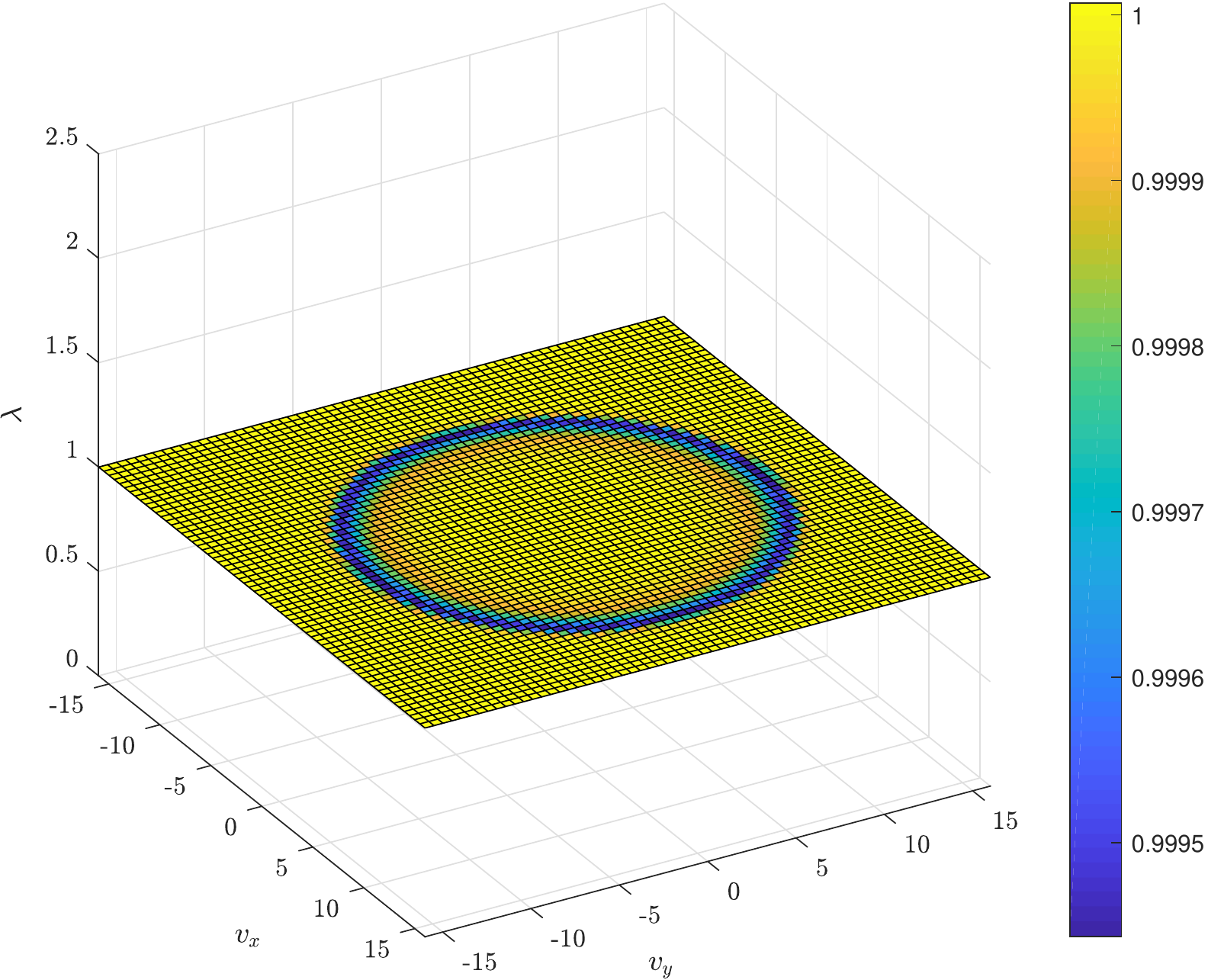}
		\captionof{figure}{Test 1. Optimal $\lambda^*(v,t)$ for the MSCV method based on a BGK control variate strategy. Top left: $t=0$. Top right: $t=5$. Bottom left: $t=10$. Bottom right: $t=50$.}
		\label{Figure4}
	\end{center}
\end{figure} 
One can clearly observe that, as theoretically predicted, $\lambda^*\rightarrow 1$ when $t\rightarrow \infty$ while initially its value is different from unity in the regions of the velocity space in which the perturbation $f_0-f^\infty$ is larger.

\subsubsection{Test 2. Uncertain collision kernel}
In this second test case, we consider the following initial deterministic condition 
\be
f_0(z,v)=\frac{1}{2\pi^2} |v|^2 \exp\left({-\frac{|v|^2}{2}}\right).
\ee
The uncertainty is in the frequency of collision and it is such that the collision kernel is
\be
B(z)=1+sz
\ee
with $s=0.2$ and $z$ uniform in $[0,1]$.

As a consequence, the equilibrium distribution is independent of the random variable $z$ and it is given by
\[
M(v)=\frac{\rho}{2\pi } \exp\left({-\frac{|v|^2}{2}}\right).
\]
%with $\rho=\int_{\RR^{d_v}} f dv$. 

In Figure \ref{Figure3}, we report the initial data, the final equilibrium state and their difference. 

As for the first case, we perform two different computations, one with $\Delta t=1$ and final time $T_f=150$ and one with $\Delta t=0.05$ and $T_f=10$. In this second case, the behavior of the standard MC method is different. In fact, since the initial data and the long time behavior are deterministic, the error is zero at $t=0$ and as $t\to +\infty$ while it grows at the beginning of the relaxation phase and it exponentially decays to zero in the second part of the relaxation phase. 
%This is due to the fact that there is no uncertainty in the initial datum and in the final state and consequently no error in the evaluation of these states in the space of the stochastic variable. In this case, we expect our control variate strategy to have better performances in the center of the relaxation phase matching the MC method at the beginning and at the end of the evolution process. 
Clearly, in this case, the MSCV method with the equilibrium state as control variate coincides with the standard MC approach since $\EE[{f}^{\infty}]({\w})={f}^{\infty}({\w})$.

On the contrary, as shown in Figure \ref{Figure5}, the MSCV method with BGK control variate strategy is able to reduce the error of several orders of magnitude in the central part of the relaxation phase collapsing to the MC method when the distribution function approaches the equilibrium state. More precisely, following Remark \ref{rk:31}, we used a BGK solution with $
\nu(z)=\nu(1+sz)$ and a number of samples for the control variate variable $M_E=10^5$.
%
%In all images are reported the MC approach and MSCV ones. The strategy considered to reduce the error is the time dependent control variate given by the BGK model $\tilde{\bf f}(v,t)$ with relaxation coefficient $\nu=\nu(1+sz)$. More in details, we report results in the case $\lambda=1$, i.e. the control variate strategy based on the Micro-Macro decomposition and $\lambda^*$, i.e. the optimal one. On the bottom, the long time behavior while on the top a magnification of the numerical solution at the beginning of the relaxation process are shown. 
%The number of samples used to compute the expected solution ranges between $M=10$ (left images) and $M=100$ (right images). The reference solution and the control variate variable are computed using Remark 3.2. In details, the number of samples for the control variate variable is $M_E=10^5$. 
\begin{figure}[ht!]
	\begin{center}
		\includegraphics[width=0.47\textwidth]{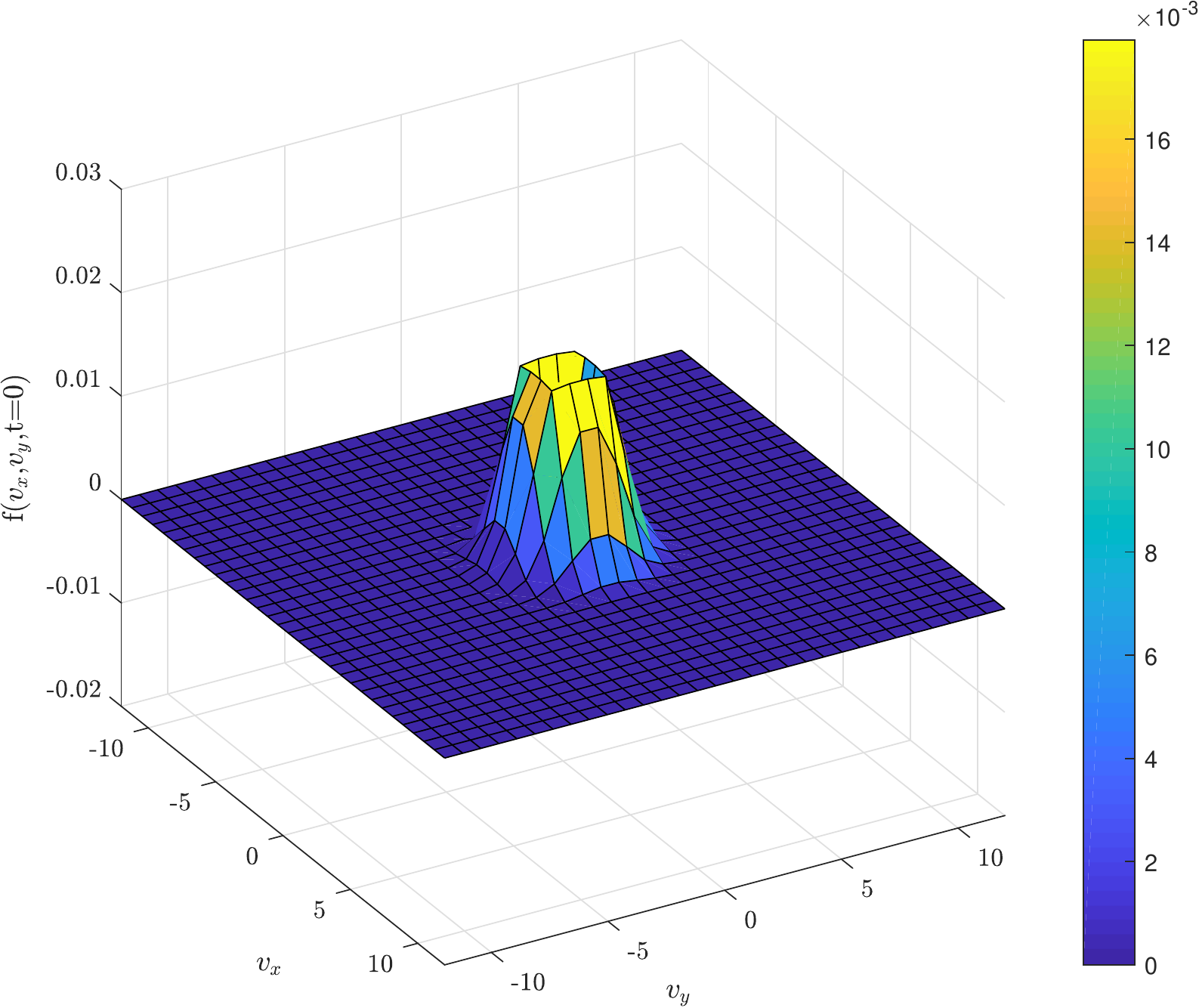}\hspace{0.5cm}
		\includegraphics[width=0.47\textwidth]{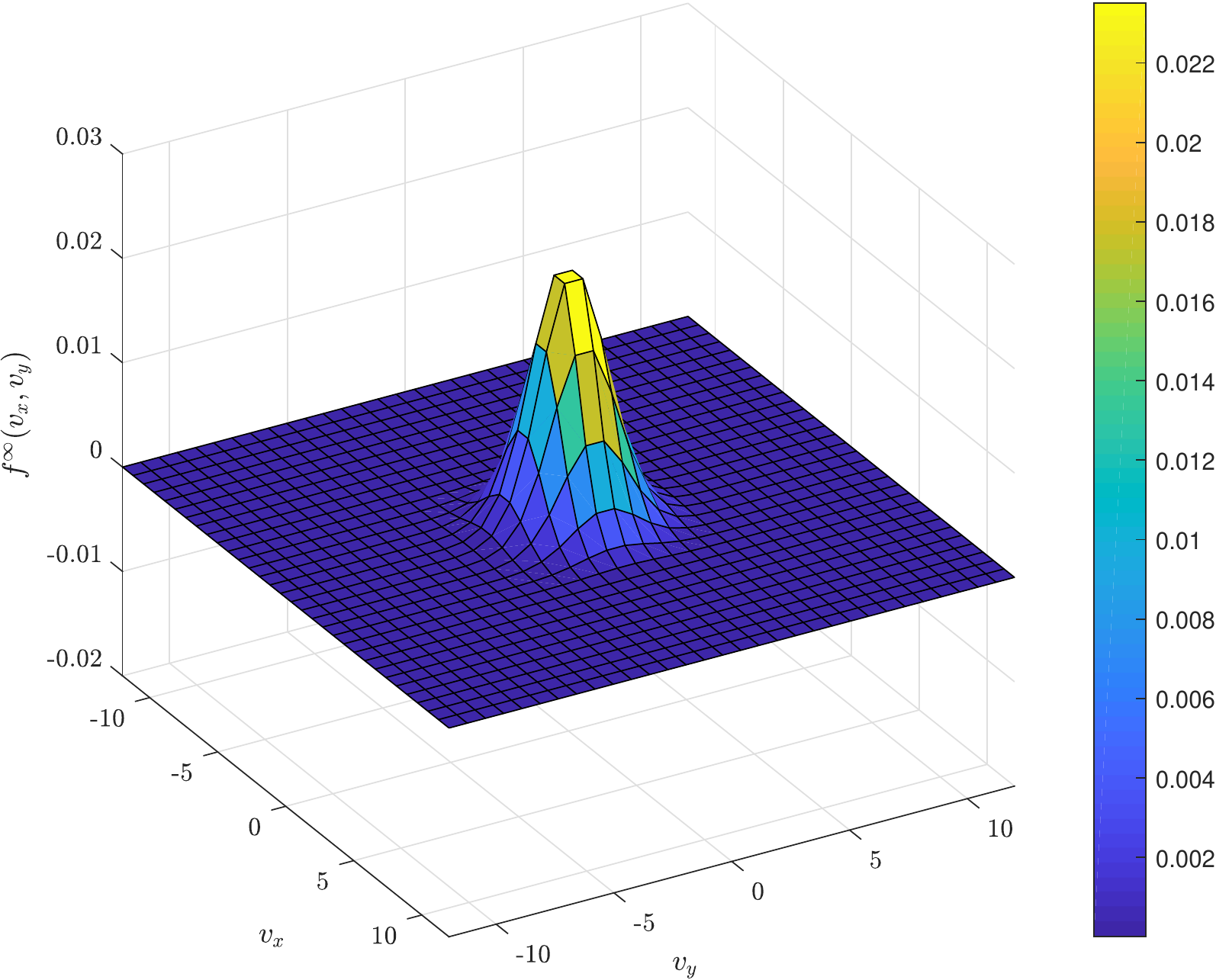}\\
		\vspace{1cm}
		\includegraphics[width=0.47\textwidth]{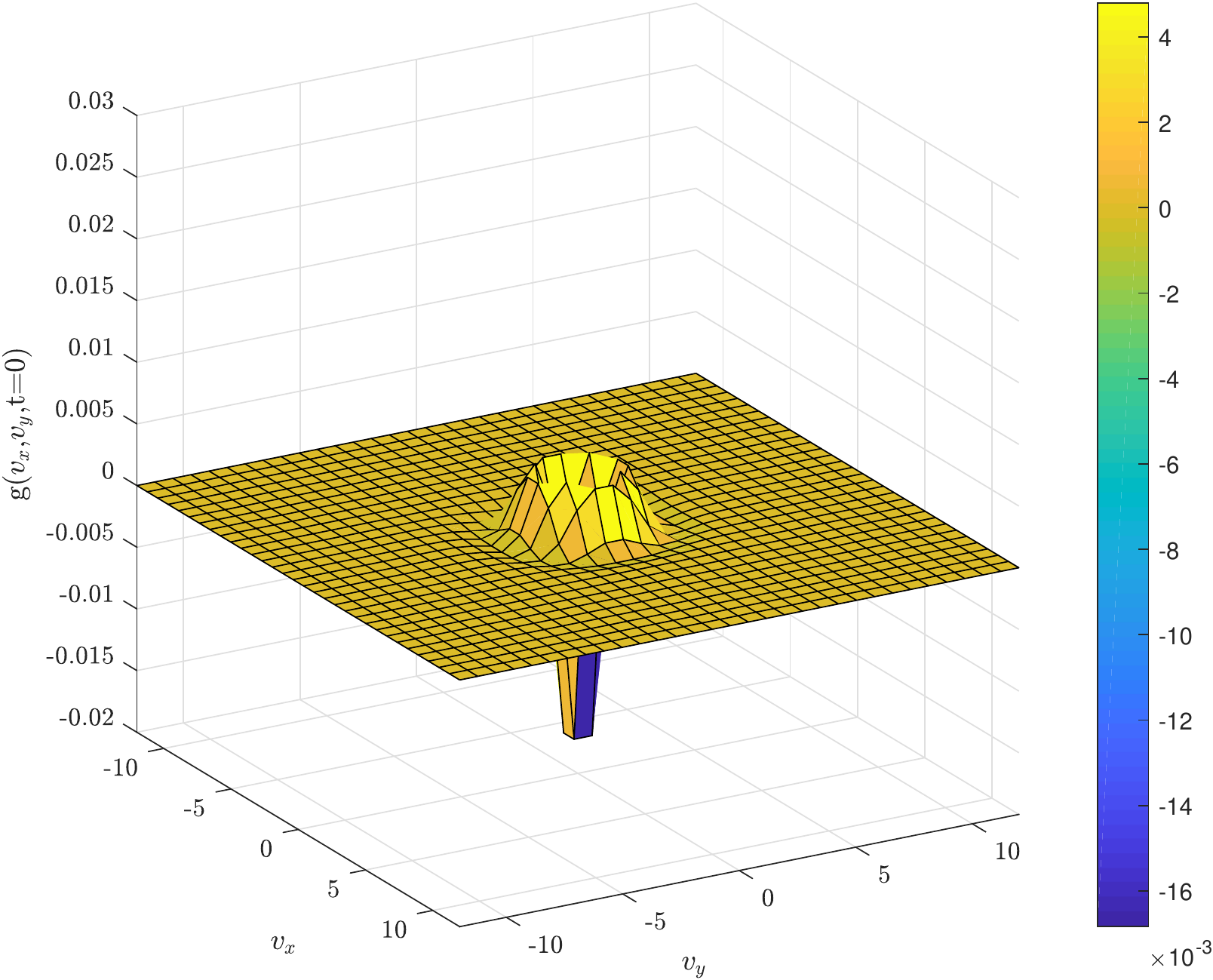}
		\captionof{figure}{Test 2. Top left: Deterministic initial distribution $f_0(v)$. Top right: Deterministic equilibrium distribution $f^\infty(v)$. Bottom: Deterministic difference $g_0(v)=f_0(v)-f^\infty(v)$.}		\label{Figure3}
	\end{center}
\end{figure}

\begin{figure}[ht!]
	\begin{center}
		\includegraphics[width=.45\textwidth]{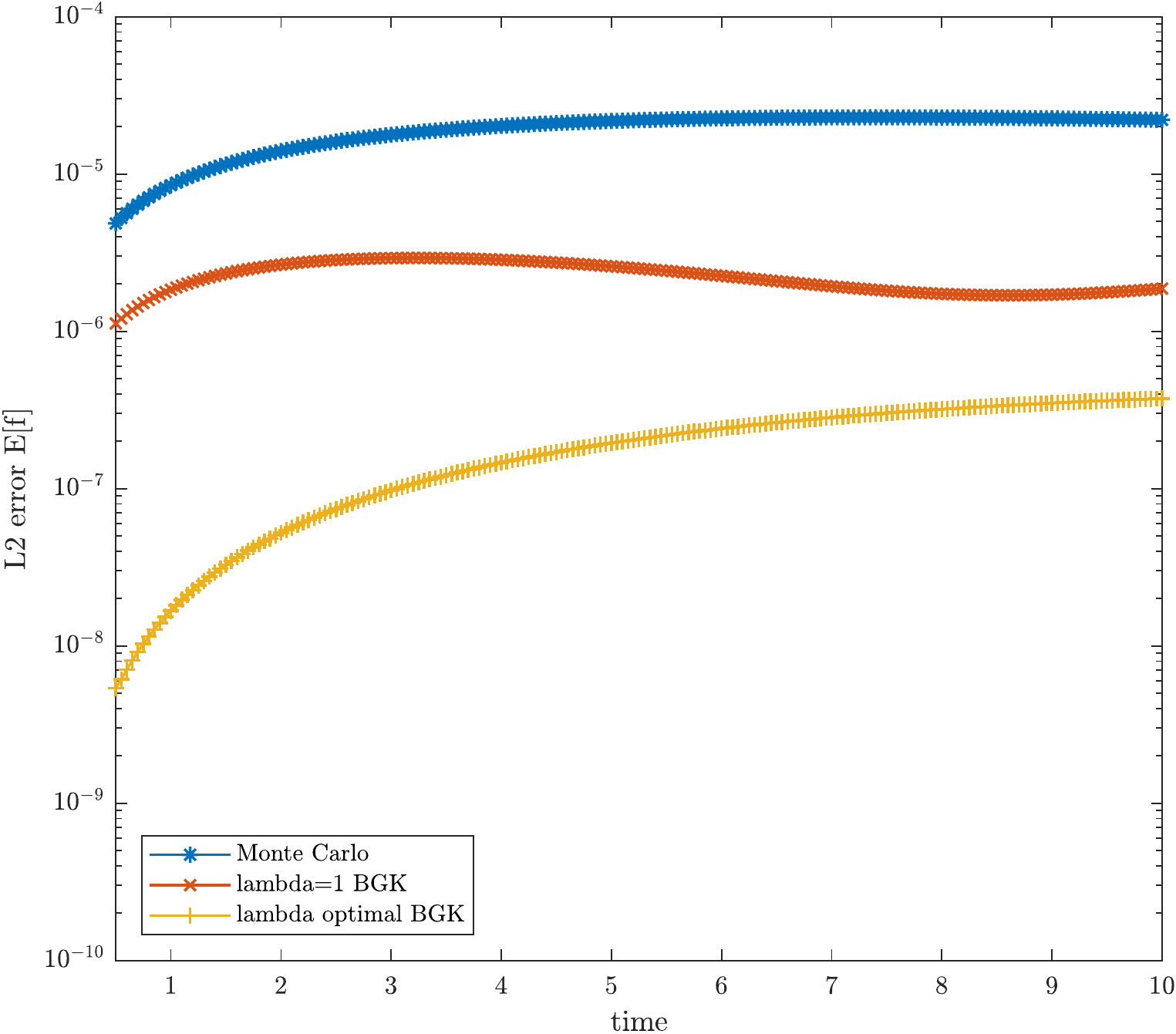}\hspace{1cm}
		\includegraphics[width=.45\textwidth]{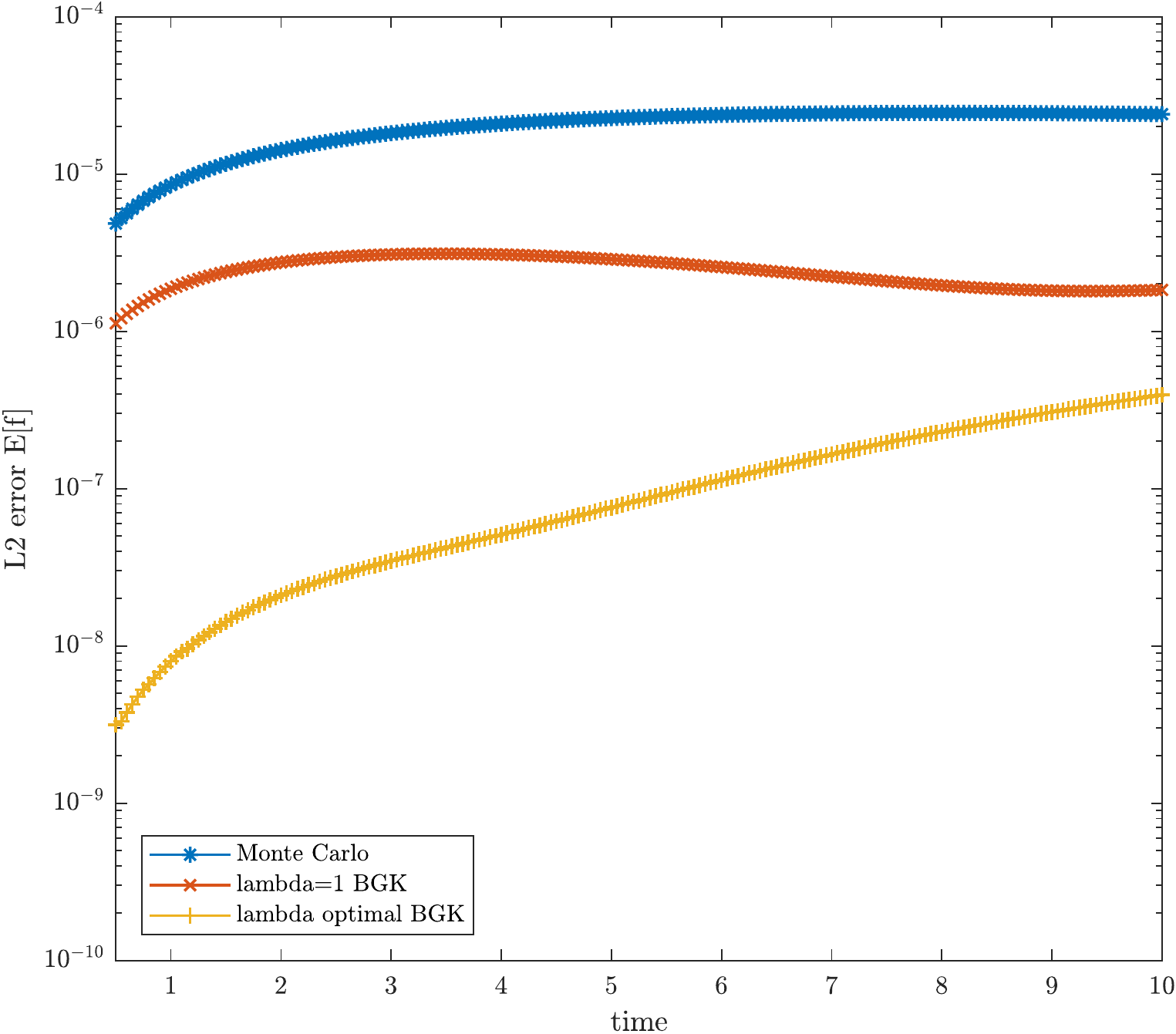}\\
		\vspace{1cm}
		\includegraphics[width=.45\textwidth]{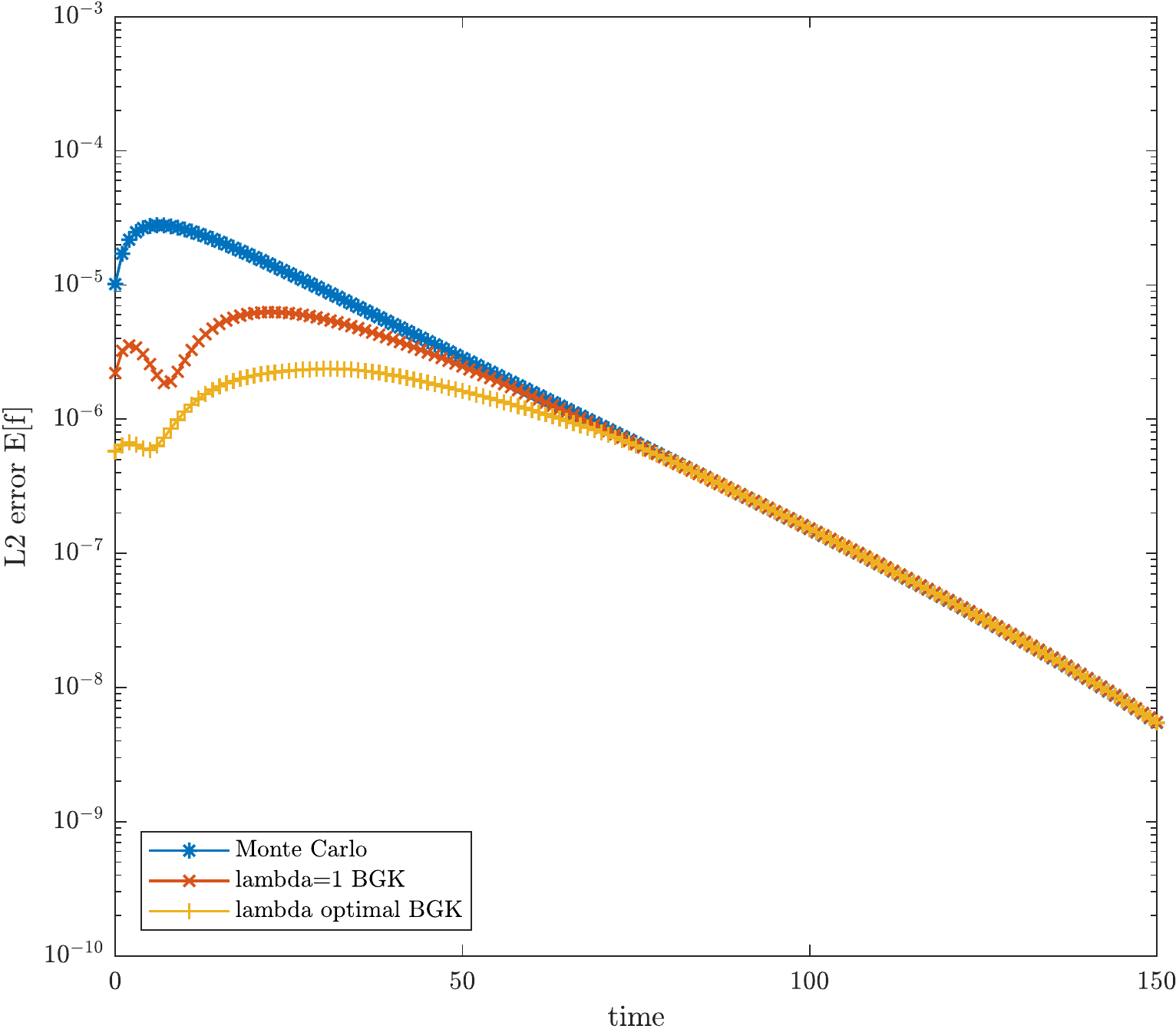}\hspace{1cm}
		\includegraphics[width=.45\textwidth]{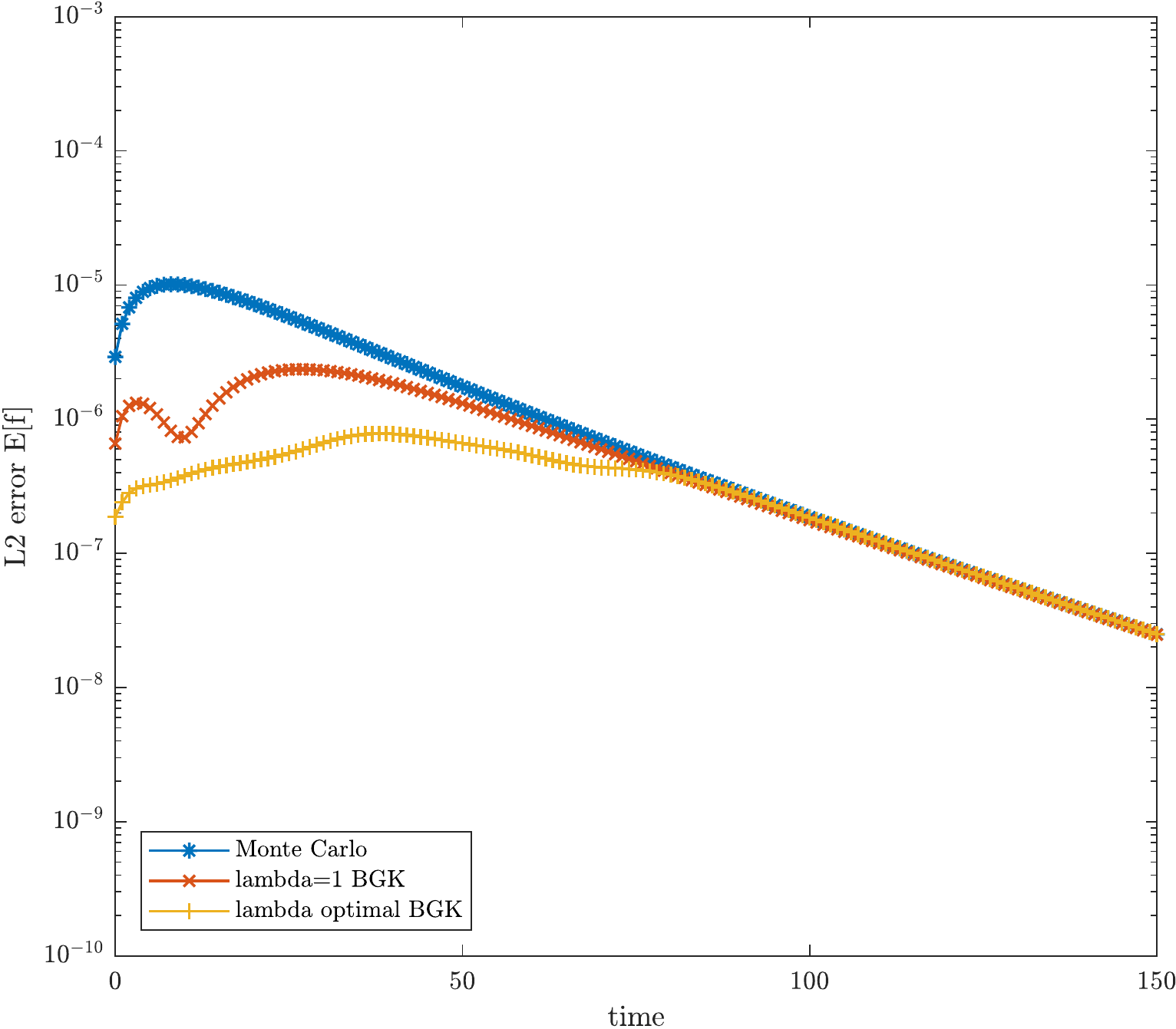}
		\captionof{figure}{Test 2. $L_2$ norm of the error for the expectation of the distribution function for the Monte Carlo method and the MSCV method for various control variates strategies. Top left: $M=10$ samples and long time behavior. Top right: $M=100$ samples  and magnification for $t\in[0,10]$. Bottom left: $M=10$ samples and long time behavior. Bottom right: $M=100$ samples and magnification for $t\in[0,10]$.}		\label{Figure5}
	\end{center}
\end{figure}

In Figure \ref{Figure7}, we finally report the shape of the optimization coefficient $\lambda^*(v,t)$ at different times. Here, as opposite to the first test, the optimization coefficient does not go to one as $t\rightarrow +\infty$. In fact, in this case $\var(f^{\infty})(v)=0$, and the optimal $\lambda^*$ cannot be computed in this situation. 
%In this case, in the limit $t\rightarrow+\infty$, the standard MC method and our control variate method gives the analogous exact limit state result.
\begin{figure}[ht!]
	\begin{center}
		\includegraphics[width=.48\textwidth]{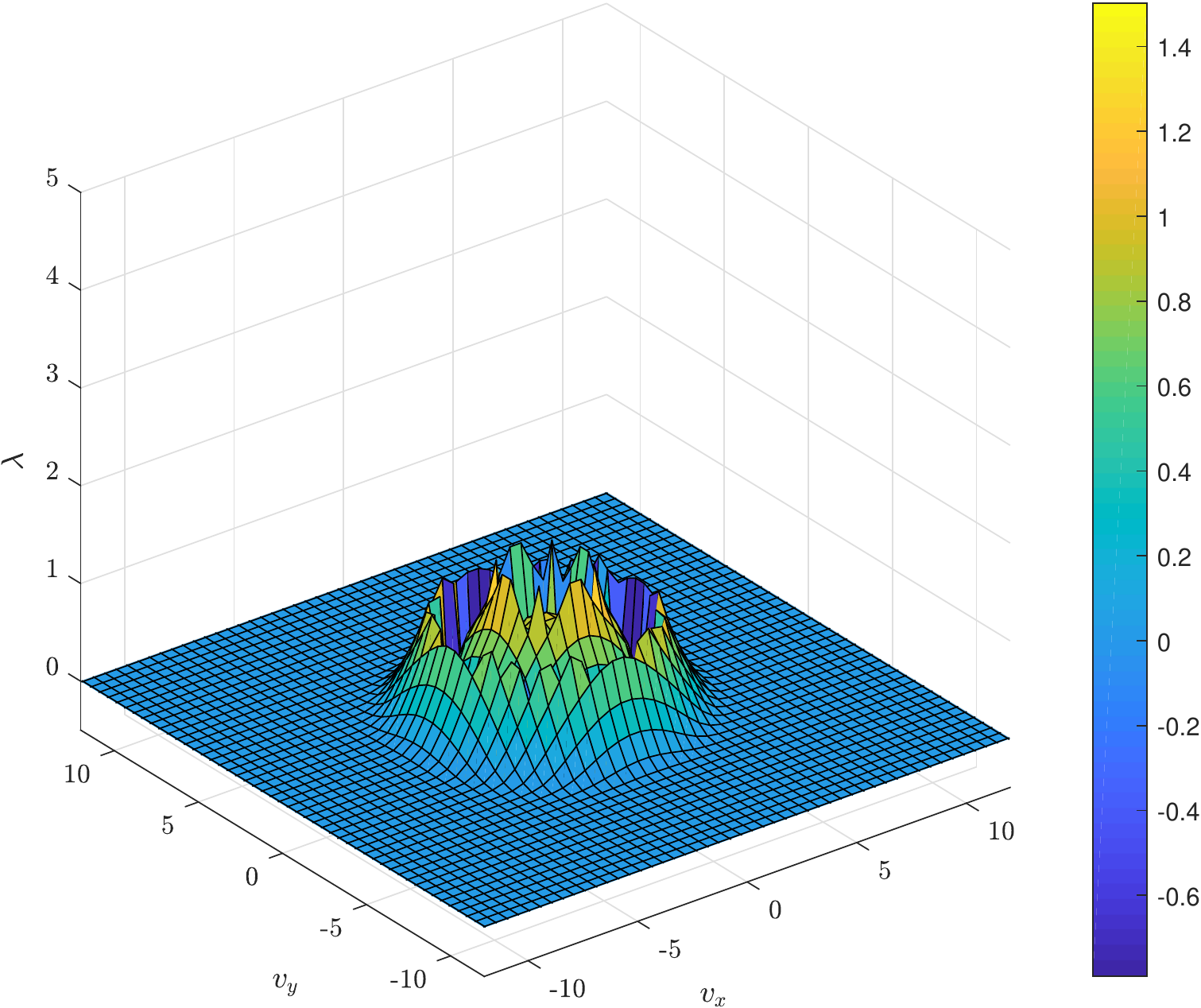}\hspace{0.5cm}
		\includegraphics[width=.48\textwidth]{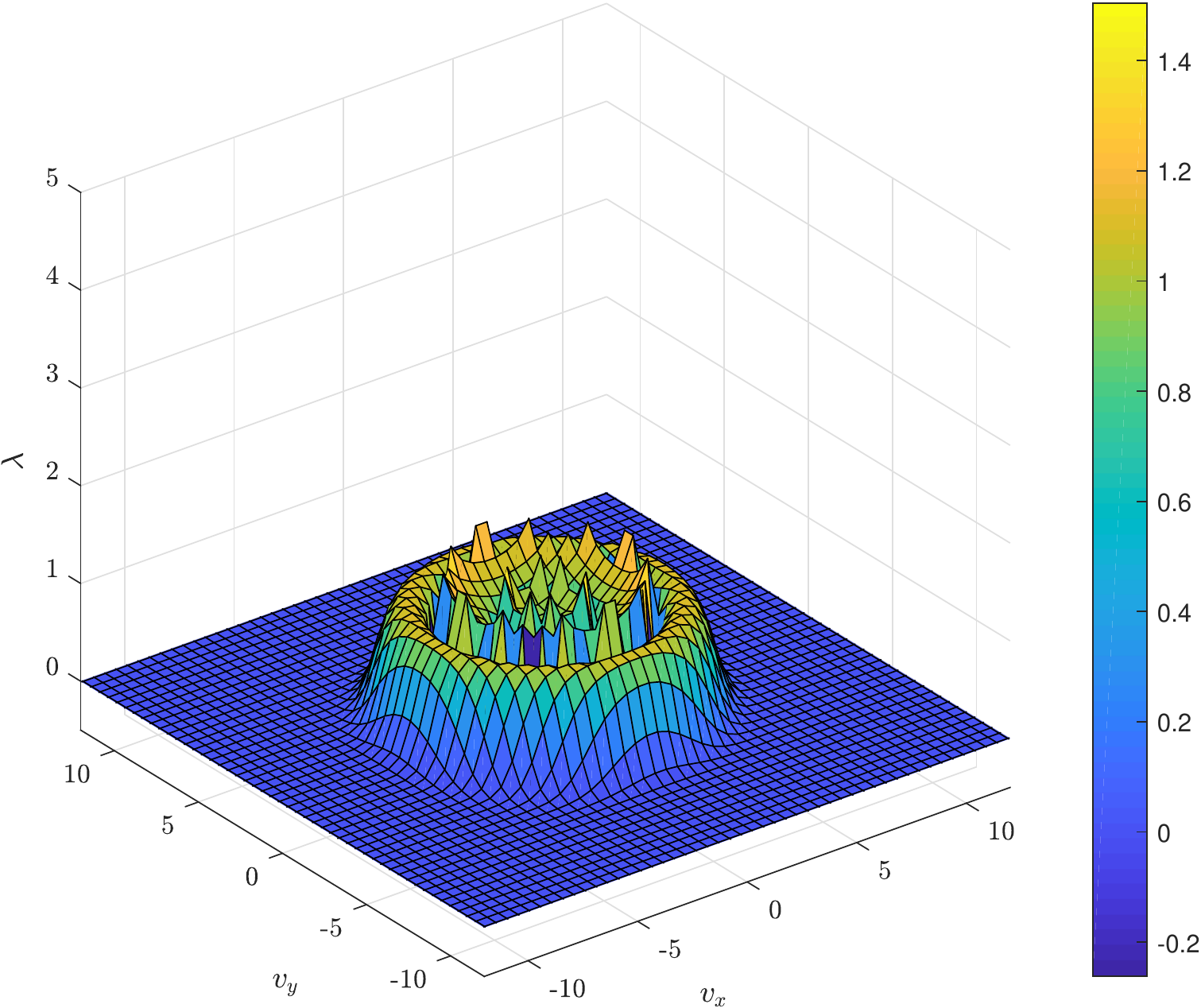}\\
		\vspace{1cm}
		\includegraphics[width=.48\textwidth]{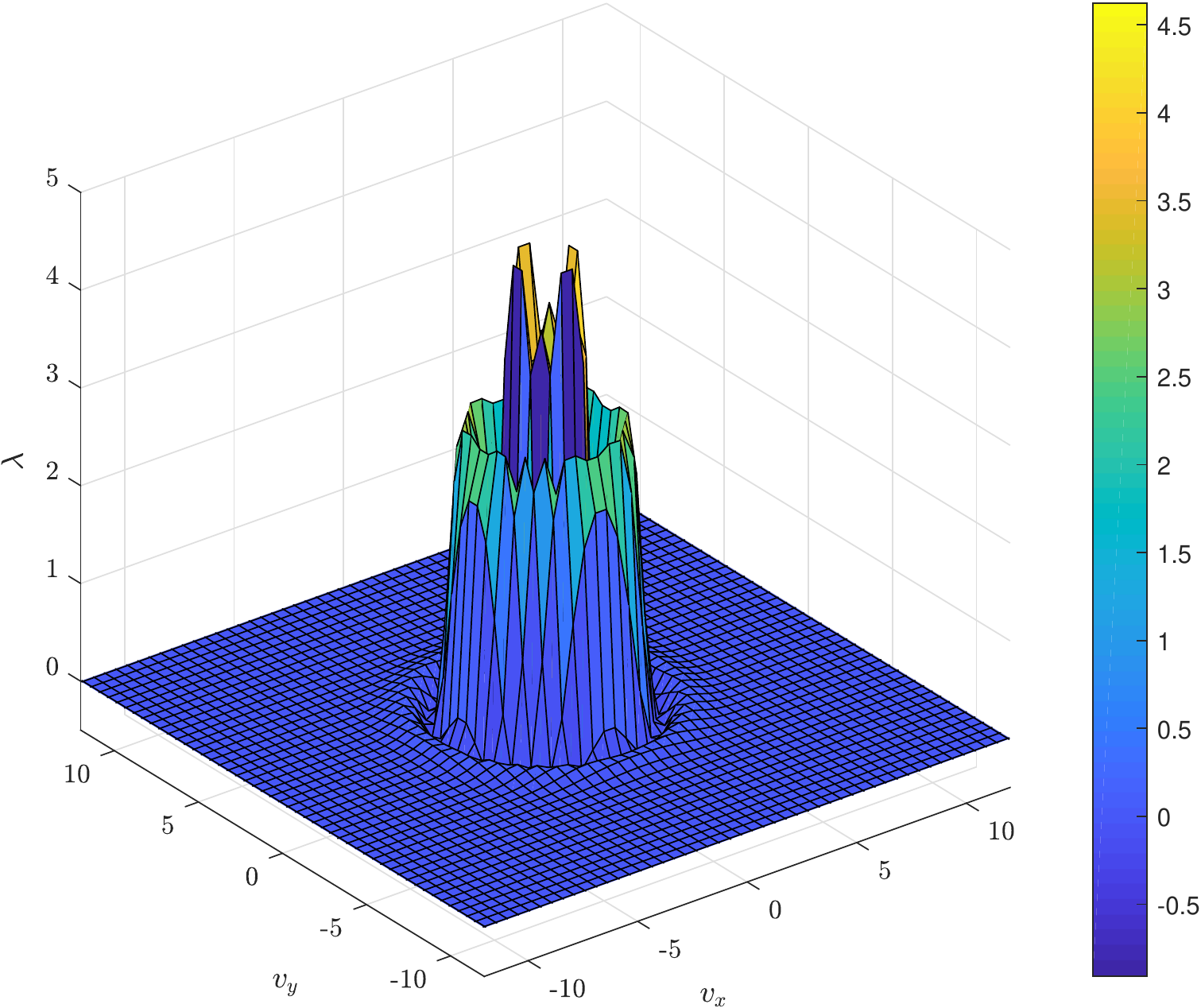}\hspace{0.5cm}
		\includegraphics[width=.48\textwidth]{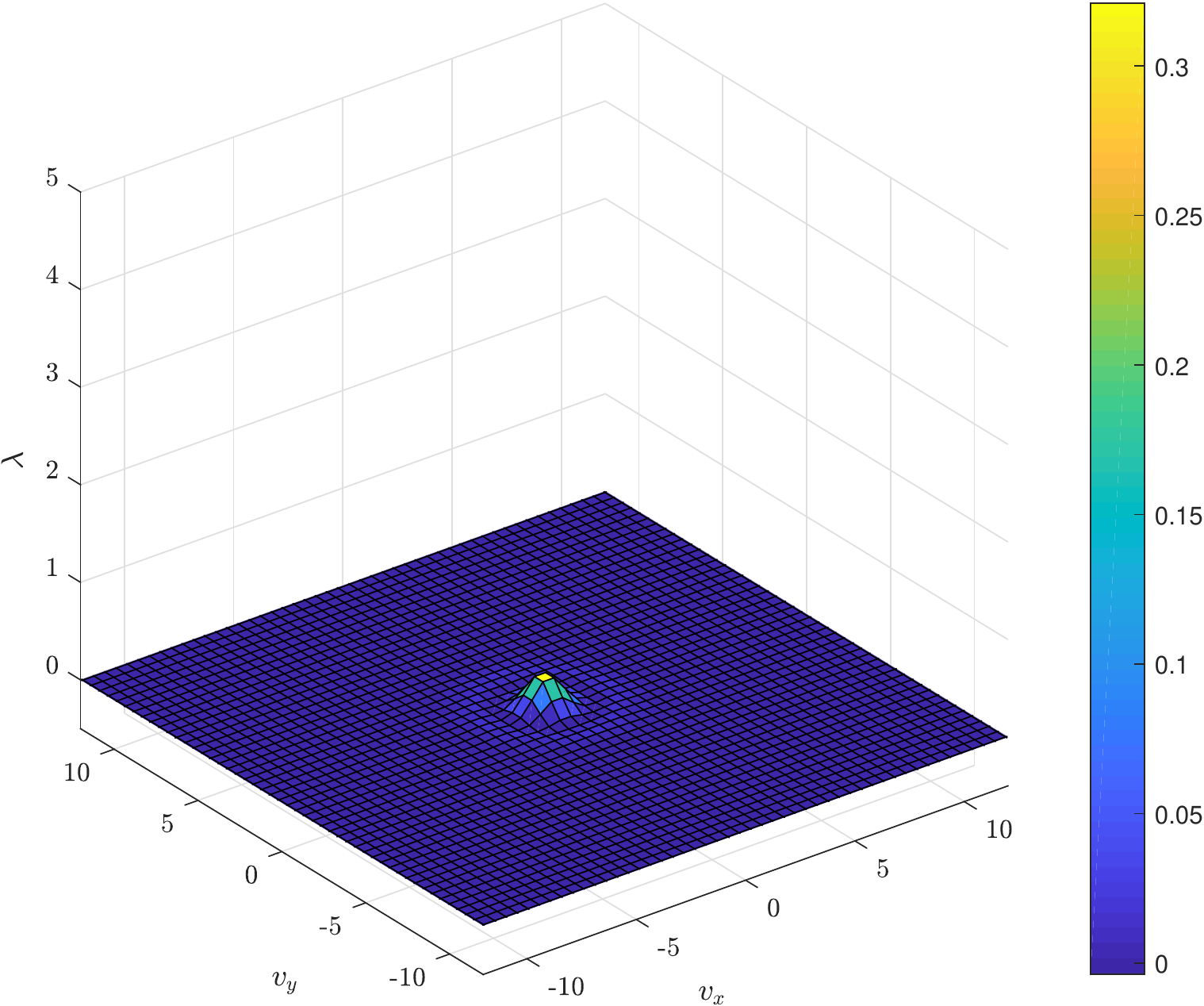}
		\captionof{figure}{Test 2. Optimal $\lambda^*(v,t)$ for the time dependent control variate strategy. Top left: $t=1$. Top right: $t=10$. Bottom left: $t=50$. Bottom right: $t=100$.}
		\label{Figure7}
	\end{center}
\end{figure}

\subsection{Space non homogeneous Boltzmann equation}
%We discuss three different problems. The first two are Sod tests with uncertainty respectively in the initial data and in the collision kernel. The third test is a problem with uncertainty in the boundary condition. 
In this case, on the contrary to the space homogeneous case, the computational cost of the control variates cannot be ignored since their solution needs to be computed in time. We focus on two different control variates, the compressible Euler equations and the BGK model. 
In the sequel we use the fast spectral method for the solution of the Boltzmann collision operator \cite{MP, DP15}, whereas for the space derivatives we apply a fifth order WENO method for all different models \cite{JS}. The time discretization is performed by a second order explicit Runge-Kutta method in all cases.

% has a cost and in the following, we show the different results it is possible to reach with different choices of the number of samples $M_E$ for the control variate variable. 
%using notations $\C(f)$ for the cost of the Boltzmann model, $\C(\tilde f)$ for the BGK model and $\C(f_F^\infty)$ for the Euler equations we have the following estimates.
More precisely, the cost of the solution of the Boltzmann equation can be estimated by 
\[ 
\C(f)=C N_a^{d_v-1} N_v^{d_v}\log_2(N_v^{d_v})N_x^{d_x}
\]
while the cost of the solution of the BGK equation by
\[ 
\C(\tilde f)= C_1 N_v^{d_v}N_x^{d_x}
\]
and similarly the cost of the solution of the compressible Euler system by 
\[ 
\C(f_F^\infty) = C_2 N_x^{d_x}, 
\]
where $C, \ C1$ and $C_2$ are suitable constants, $N_a$ is the number of angular directions in the velocity space, $N_v$ the number of grid points in velocity space, $N_x$ the number of grid points in physical space and $d_v$ and $d_x$ respectively the dimensions in velocity and space. Now, fixing the total cost of the simulation \[M\C(f)=M_{E_1}\C(\tilde f)=M_{E_2}\C(f^\infty_F),\] where $M$, $M_{E_1}$ and $M_{E_2}$ are the number of samples used respectively in the Boltzmann model, the BGK model and the Euler equations,
we get the number of samples in the control variates for a given cost  
\be M_{E_1}=M\frac{C N_a^{d_v-1} \log_2(N_v^{d_v})}{C_1}\ee
and
\be M_{E_2}=M\frac{ C N_a^{d_v-1} N_v^{d_v}\log_2(N_v^{d_v})}{C_2}.\ee
In the following tests, we choose $M=10$, $N_x=100$, $Nv=32$, $d_x=1$, $N_a=8$ and $d_v=2$. In our case, a rough estimation of the ratio between the coefficients $C$, $C1$ and $C_2$ gives ${C}/{C_1} \approx 1.25$ and ${C}/{C_2} \approx 1$.
This gives approximatively $M_{E_1}=10^3$ and $M_{E_2}=10^6$.

\subsubsection{Test 3. Sod test with uncertain initial data}
The initial conditions are
\bea
&\rho_0(x)=1, \ \ T_0(z,x)=1+sz \qquad &\textnormal{if} \ \ 0<x<L/2 \\
&\rho_0(x)=0.125,\ T_0(z,x)=0.8+sz  \qquad &\textnormal{if} \ \ L/2<x<1
\eea
with $s=0.25$, $z$ uniform in $[0,1]$ and equilibrium initial distribution
\[
f_0(z,x,\w)=\frac{\rho_0(x)}{2\pi } \exp\left({-\frac{|v|^2}{2T_0(z,x)}}\right).
\] 
The velocity space is truncated with $v_{\min}=v_{\max}=8$. 
The time step is the same for all methods and is taken as $\Delta t=\min\{\Delta x/(2 v_{\max}), \varepsilon\}$ with $\varepsilon$ the Knudsen number. Note that, here we are interested only in the accuracy in the random variable, therefore we selected a simple explicit method satisfying the above CFL condition. We refer to \cite{DP15} for other choices which avoids the limitation with respect to $\varepsilon$.
We perform three different computations corresponding to $ \varepsilon=10^{-2}$, $ \varepsilon=10^{-3}$ and $ \varepsilon=2 \times 10^{-4}$. The final time is fixed to $T_f=0.875$. 
In Figure \ref{Figure8}, we report the expectation of the solution at the final time together with the confidence bands $\EE[T]-\sigma_T, \EE[T]+\sigma_T$ with $\sigma_T$ the standard deviation. These reference values have been computed by an orthogonal polynomial collocation method.  \begin{figure}[ht!]
	\begin{center}
		\includegraphics[width=0.45\textwidth]{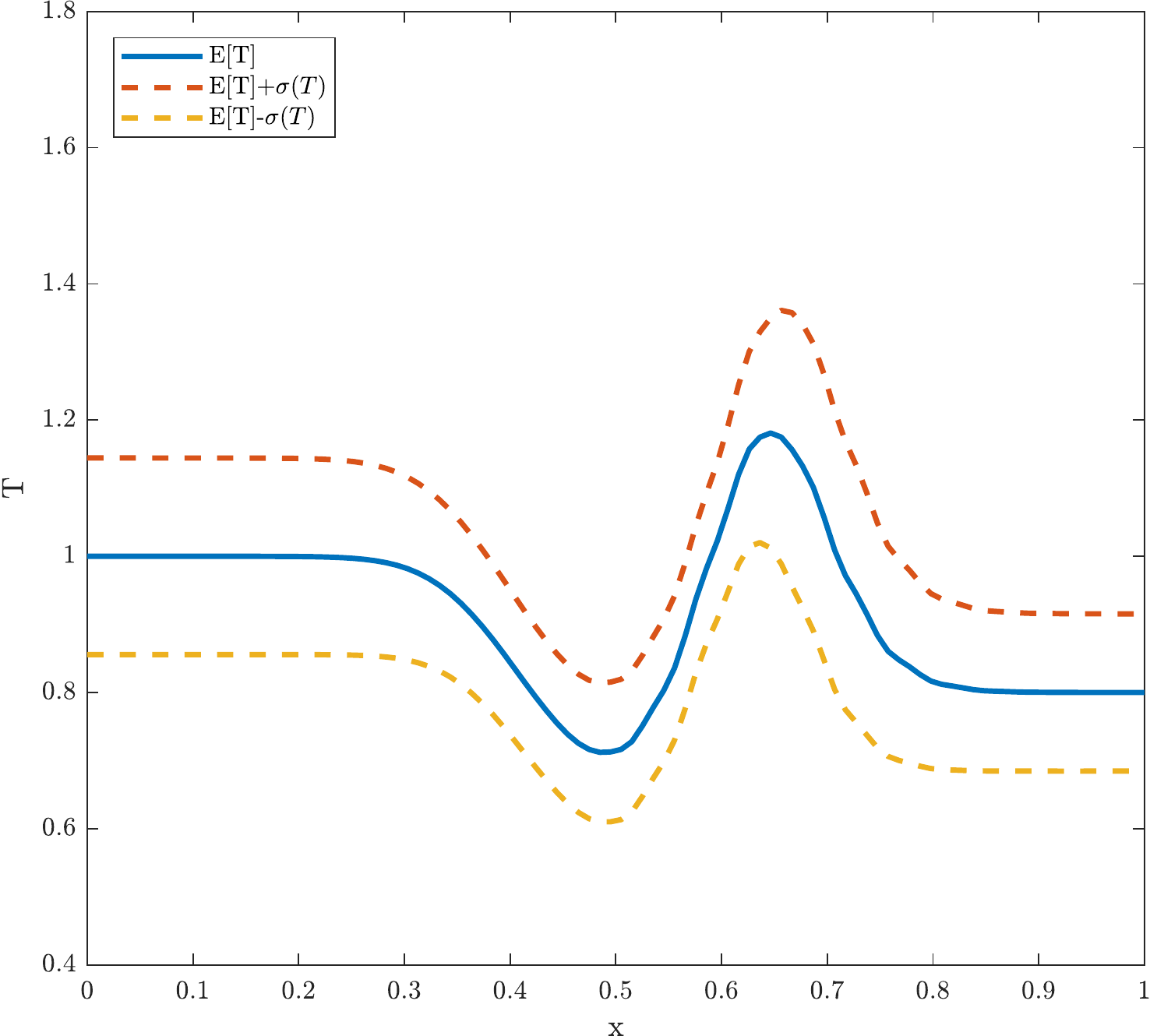}\hspace{1cm}
		\includegraphics[width=0.45\textwidth]{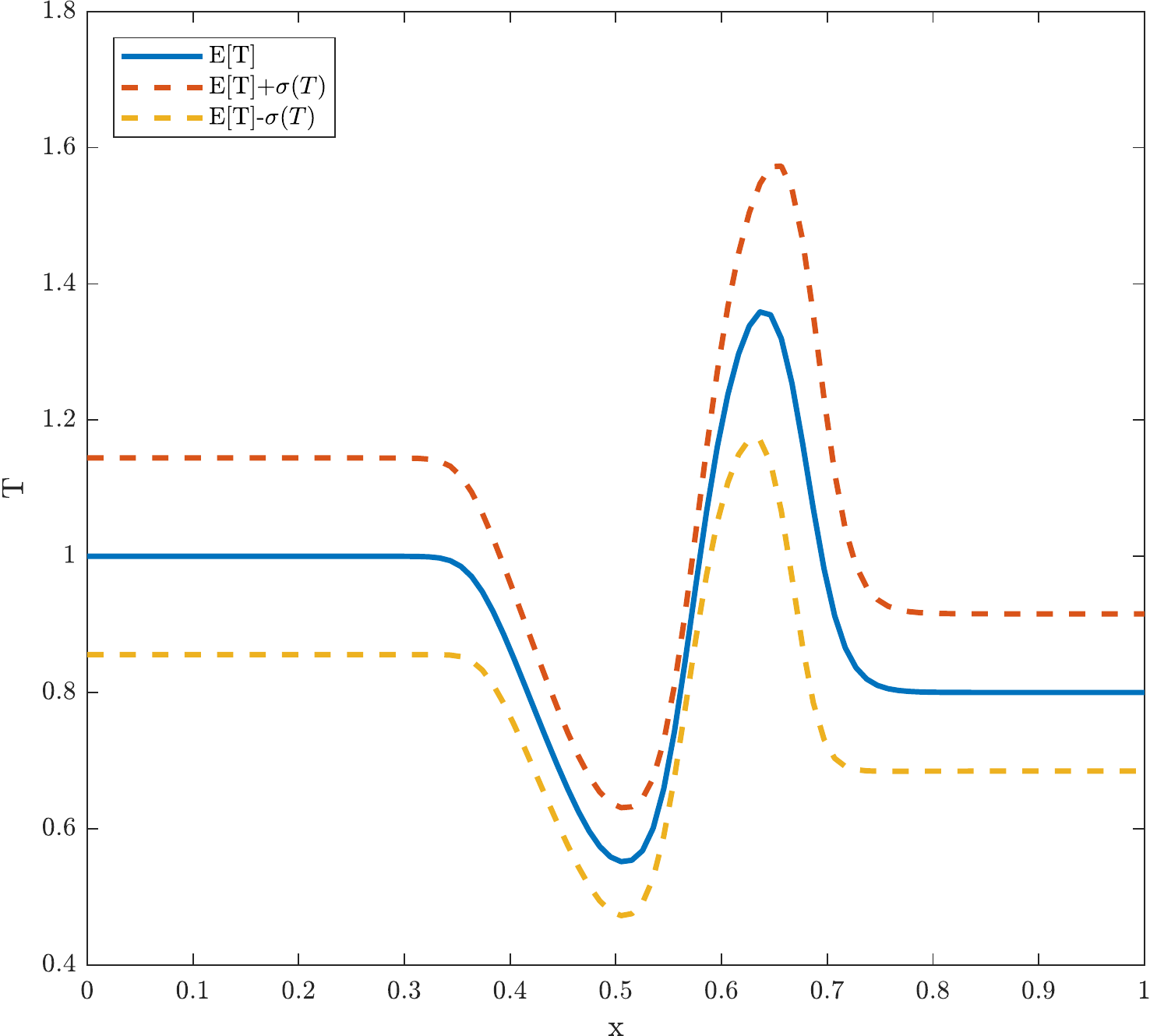}\\
		\includegraphics[width=0.45\textwidth]{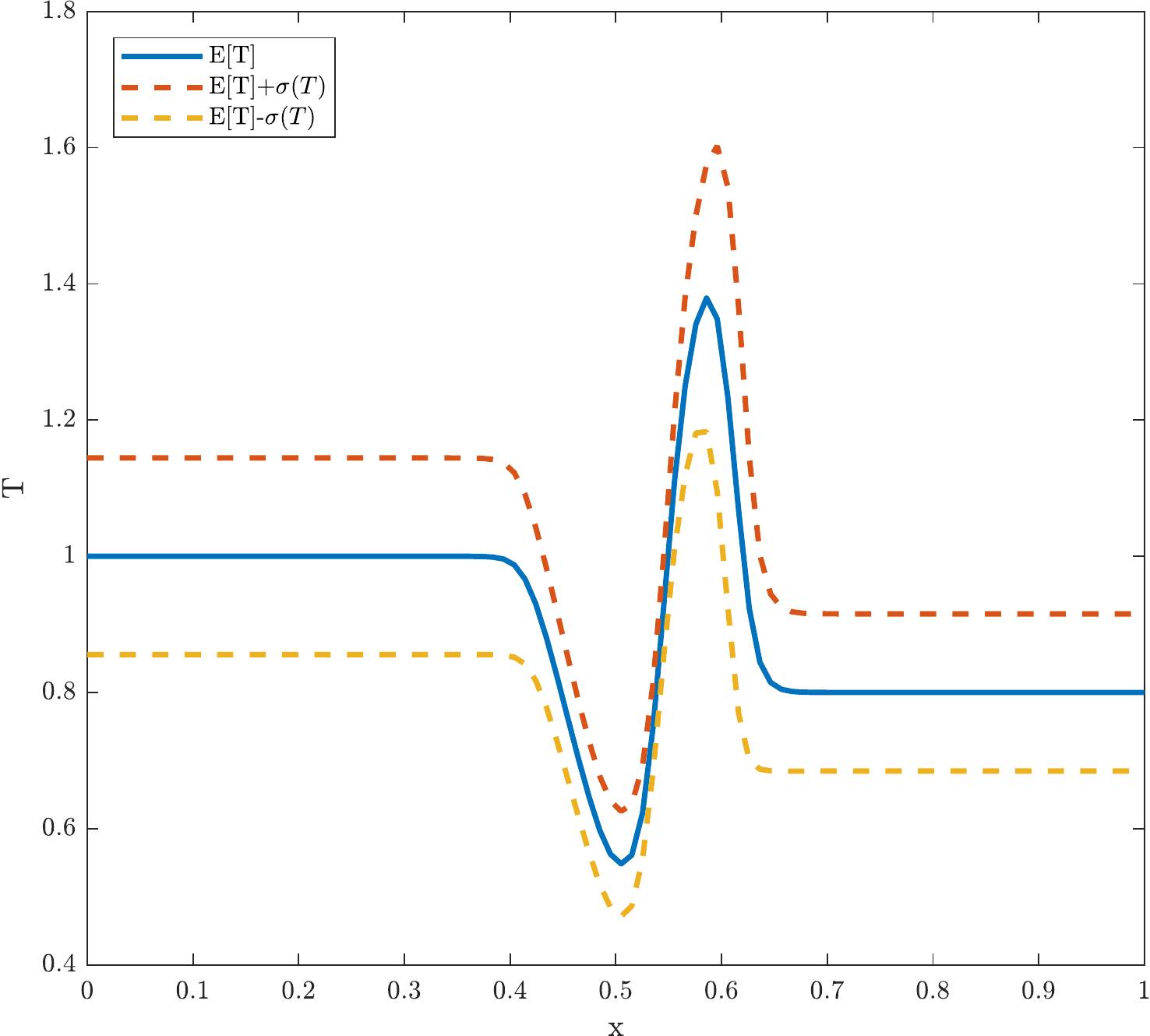}
		\captionof{figure}{Test 3. Sod test with uncertainty in the initial data. Temperature profile at final time : expectation and confidence bands. Top left: $\varepsilon=10^{-2}$. Top right: $\varepsilon=10^{-3}$. Bottom: $\varepsilon=2 \times \ 10^{-4}$.}
		\label{Figure8}
	\end{center}
\end{figure}

In Figure \ref{Figure9}, we report the various errors for the expected value of the temperature as a function of time. 
%In all images are reported the standard Monte Carlo approach and the MSCV methods. 
%As for the space homogeneous case, we considered two different control variates considered are two. The first is the equilibrium state ${\bf f}^{\infty}({\w,t})$ which corresponds in the space non homogeneous case to the solution of the compressible Euler equation while the second is the one given by the BGK equation $\tilde{\bf f}(\w,t)$. For both cases, we consider $\lambda=1$, i.e. the control variate strategy based on the Micro-Macro decomposition and $\lambda^*$, i.e. the optimal one. 
The number of samples used to compute the expected value of the solution is $M=10$ while the number of samples used to compute the control variate is $M_E=10^3$ for the left images and $M_E=10^4$ for the right ones. The optimal values of $\lambda^*(x,t)$ have been computed with respect to the temperature as described in Remark \ref{rk:33}.
%From top to bottom, the Knudsen number takes the three values $ \varepsilon=10^{-2}$, $ \varepsilon=10^{-3}$ and $ \varepsilon=2 \ 10^{-4}$. 
We see that, when $M_E=10^3$ and for moderate Knudsen numbers the MSCV method with the BGK control variate gives the best result while when the Knudsen number diminishes, as expected, the two control variate strategies gives analogous results for $M_E=10^3$. For the case $M_E=10^4$, the MSCV method with the BGK control variate is again more accurate, however its cost is slightly higher than the cost of the original MC method applied to the full Boltzmann model with $M=10$.
\begin{figure}[ht!]
	\begin{center}
		\includegraphics[width=.4\textwidth]{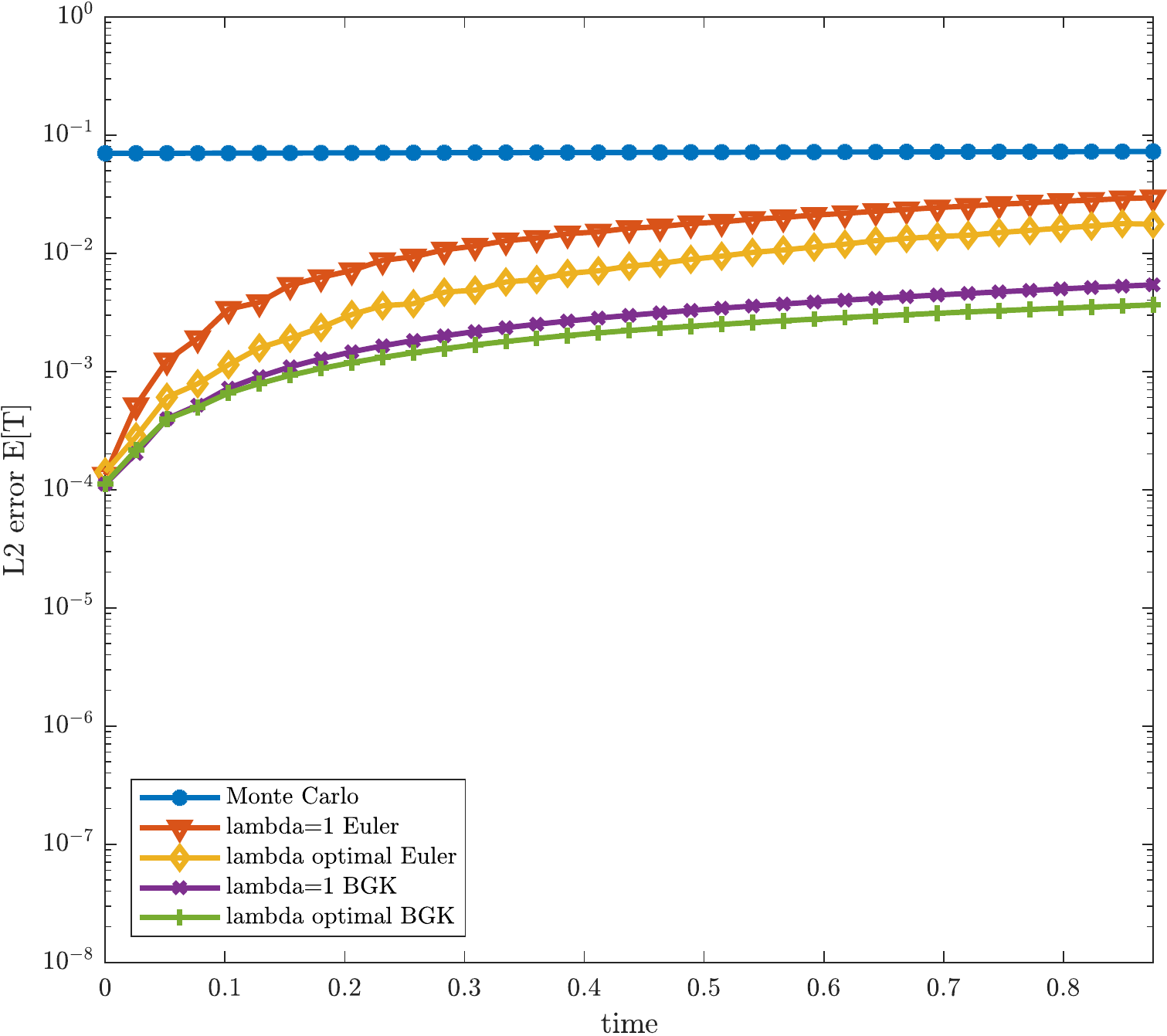}\hspace{1cm}
			\includegraphics[width=.4\textwidth]{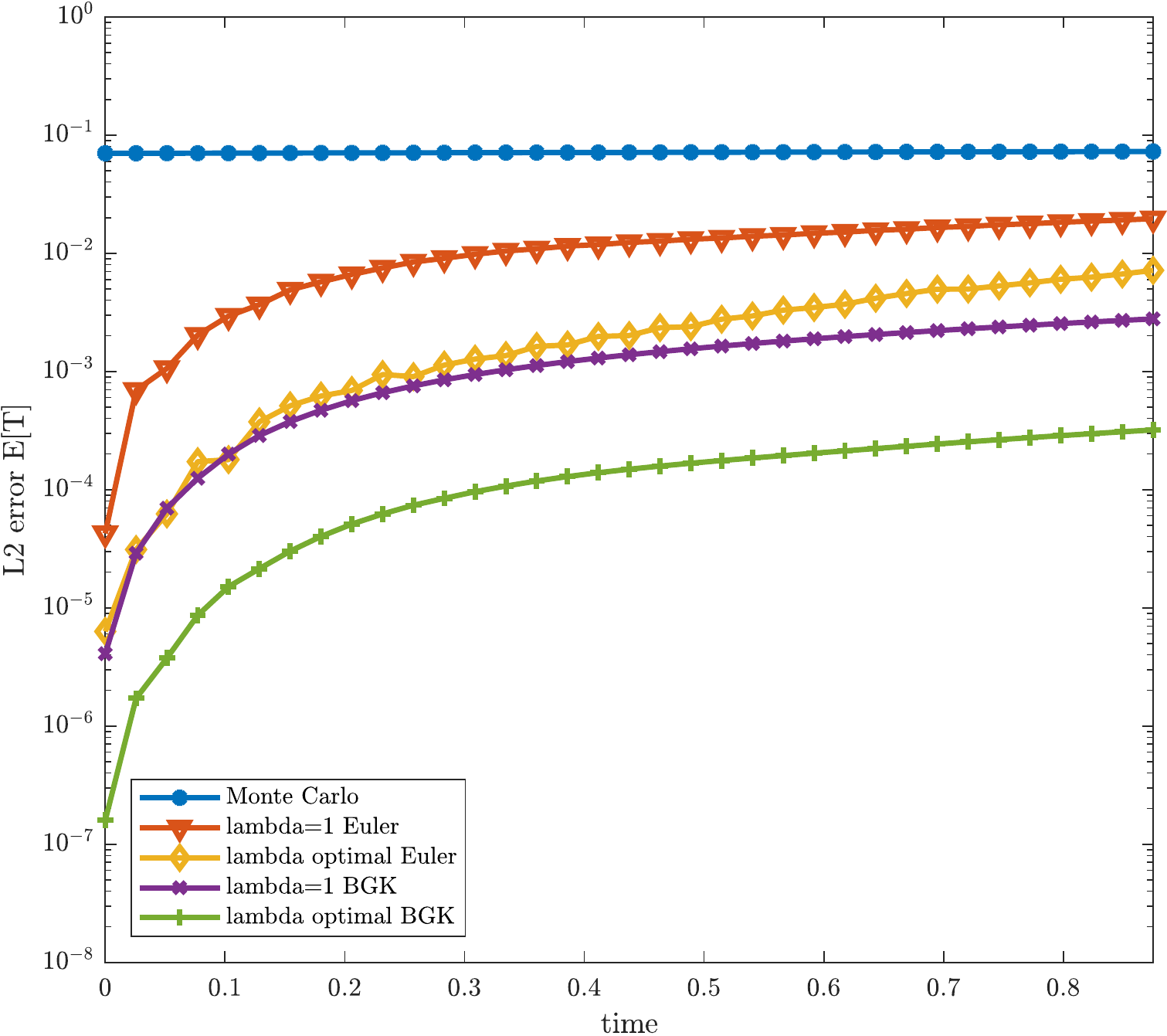}\\
		\includegraphics[width=.4\textwidth]{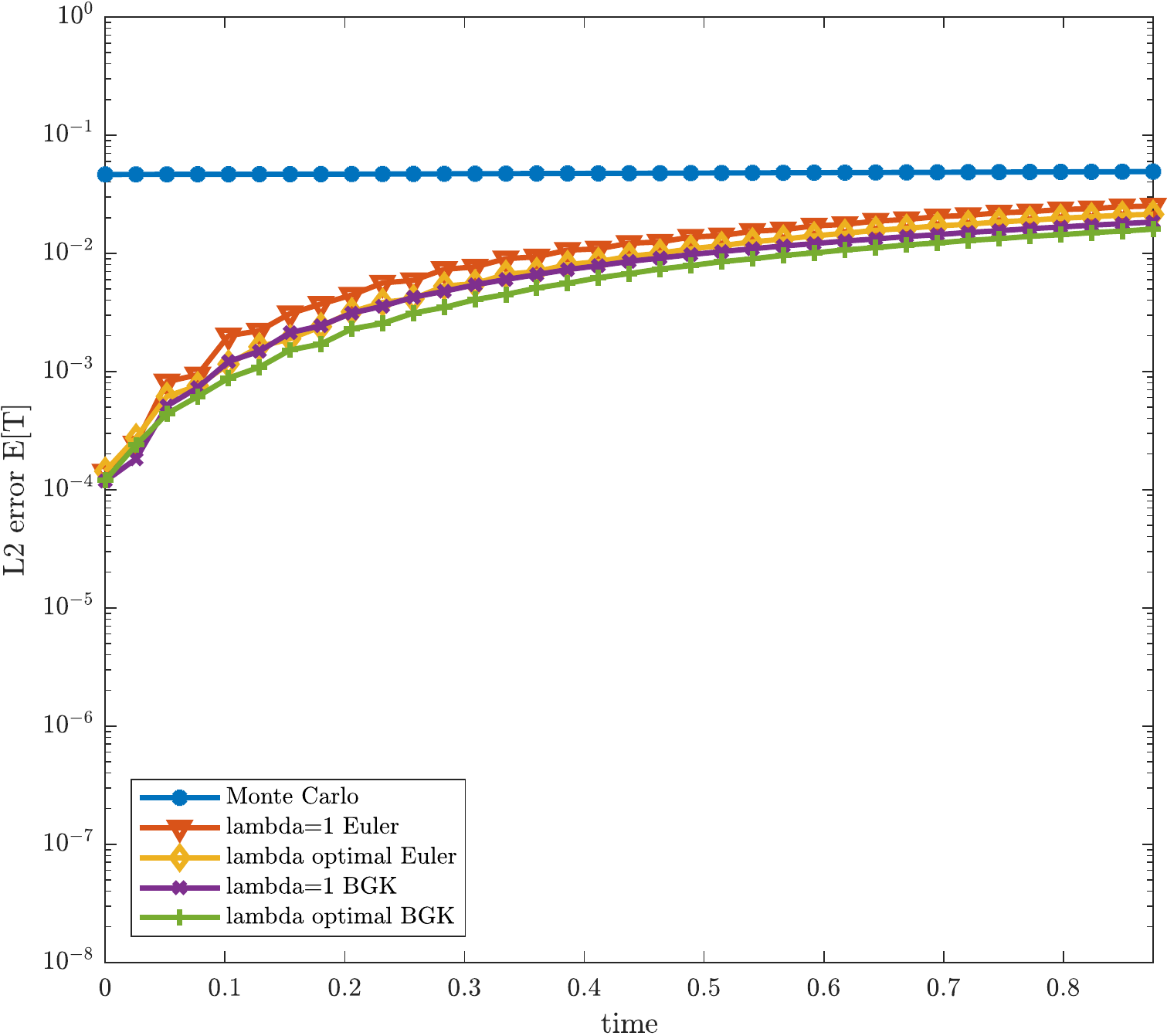}\hspace{1cm}
			\includegraphics[width=.4\textwidth]{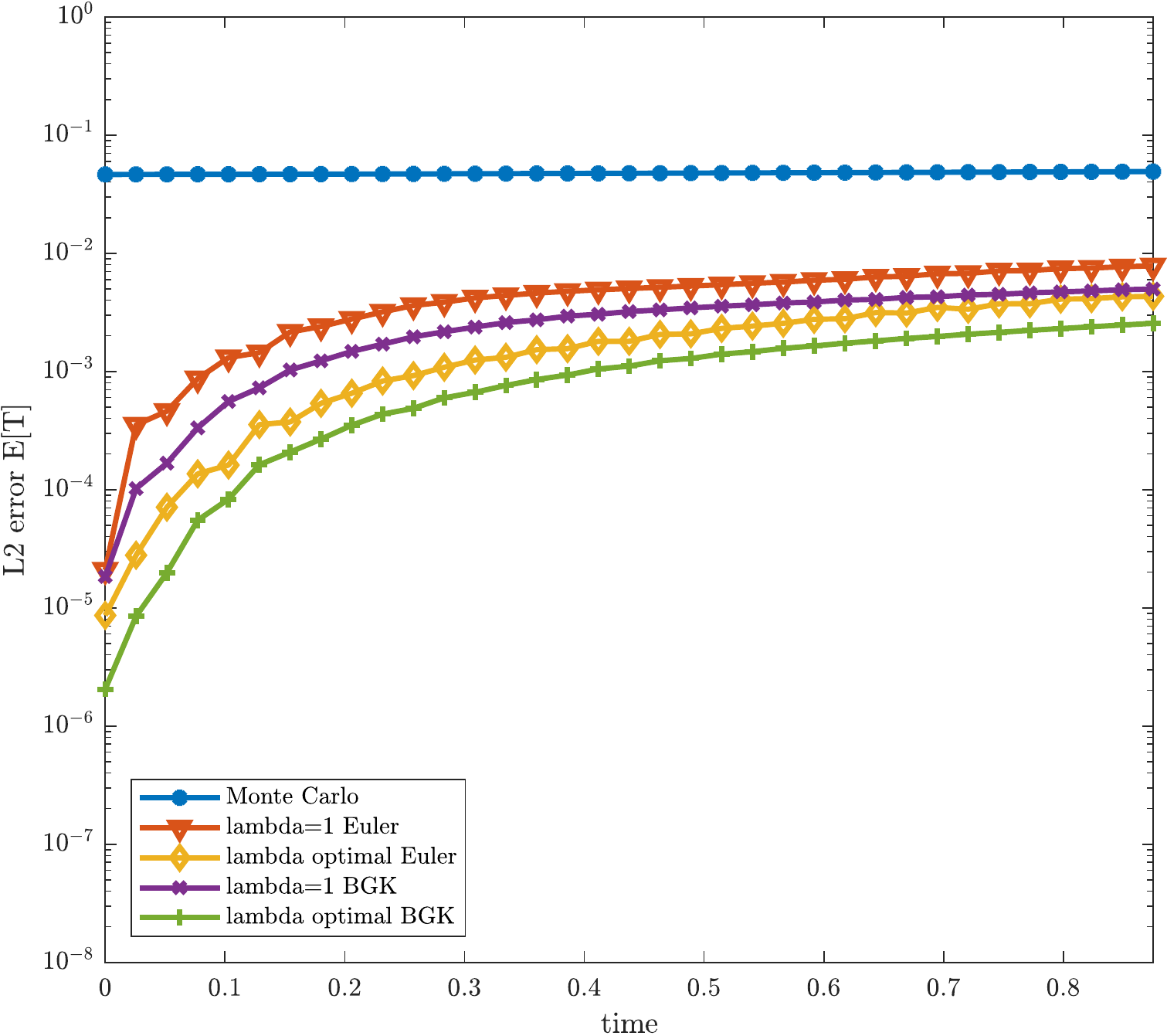}\\
		\includegraphics[width=.4\textwidth]{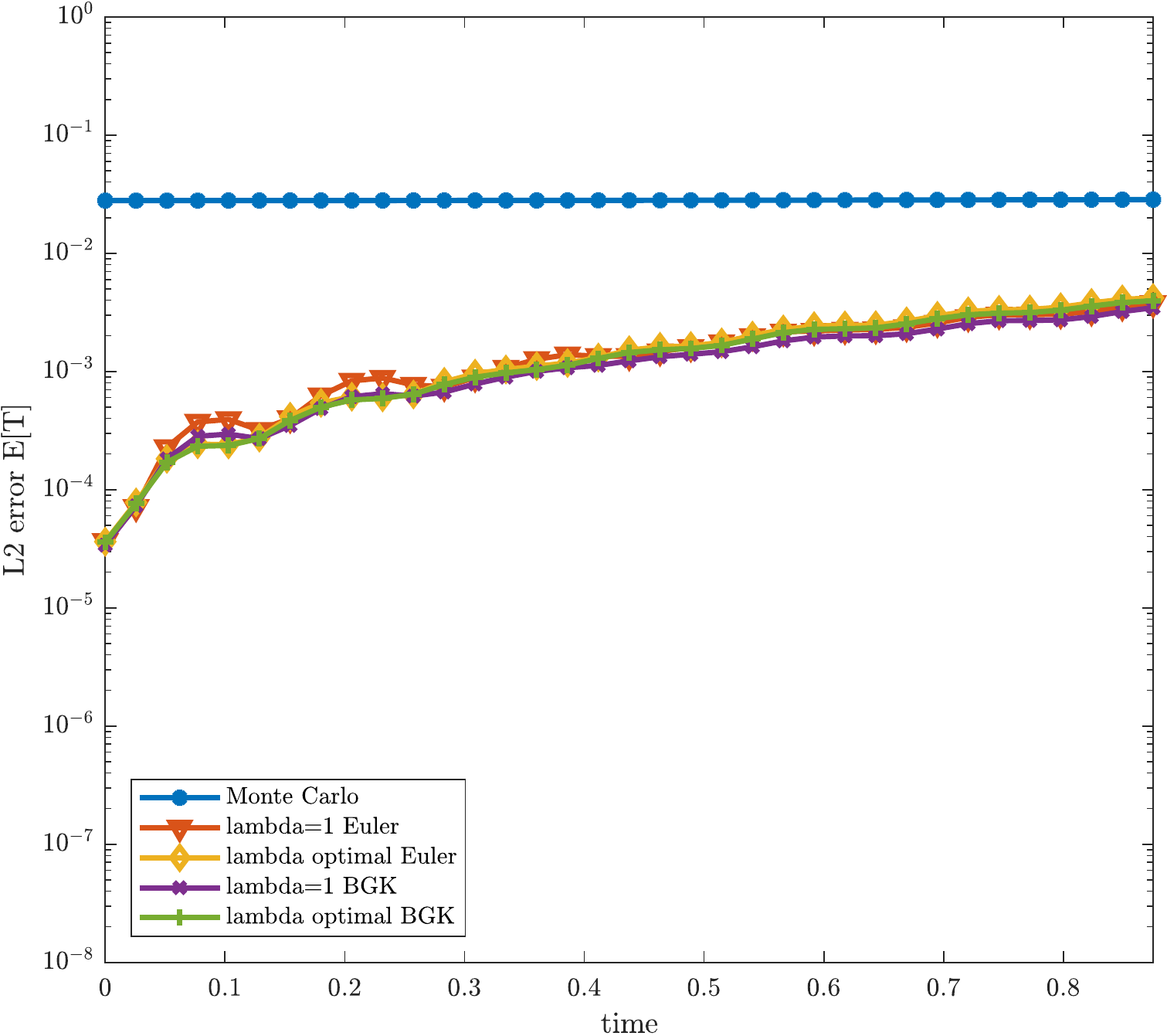}\hspace{1cm}
			\includegraphics[width=.4\textwidth]{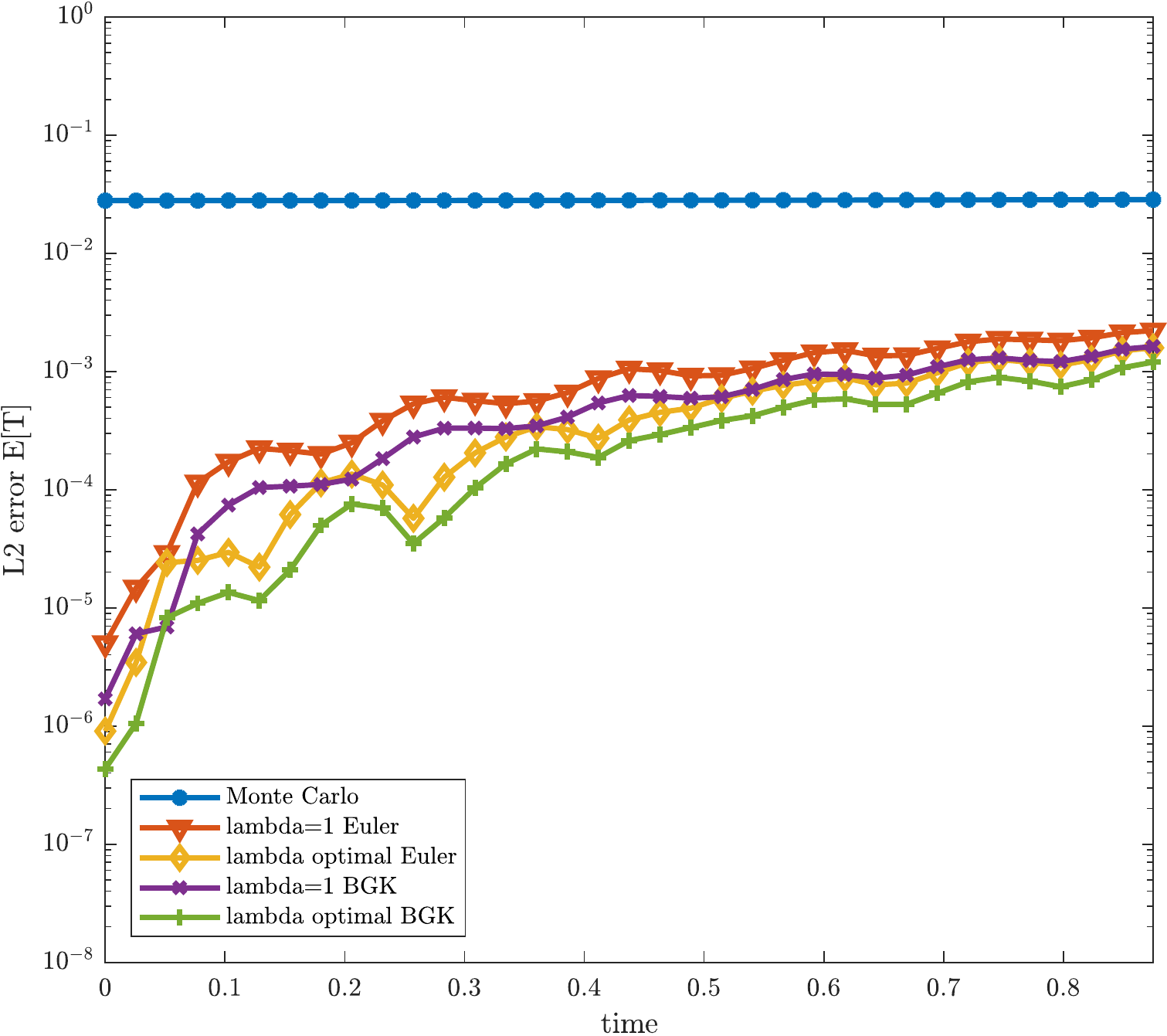}
		\captionof{figure}{Test 3. Sod test with uncertainty in the initial data for the Monte Carlo method and the MSCV method for various control variates strategies.  $L_2$ norm of the error for the expectation of the temperature with $M=10$ samples. Top left: $\varepsilon=10^{-2}$. Top right: $\varepsilon=10^{-3}$. Bottom: $\varepsilon=2 \times \ 10^{-4}$. Left panels: $M_E=10^3$. Right panels: $M_E=10^4$.}\label{Figure9}
	\end{center}
\end{figure}
In Figure \ref{Figure11}, we finally report the shape of the optimization coefficient $\lambda^*(x,t)$ in time for the BGK control variate for the different Knudsen numbers. 
\begin{figure}[ht!]
	\begin{center}
		\includegraphics[width=.46\textwidth]{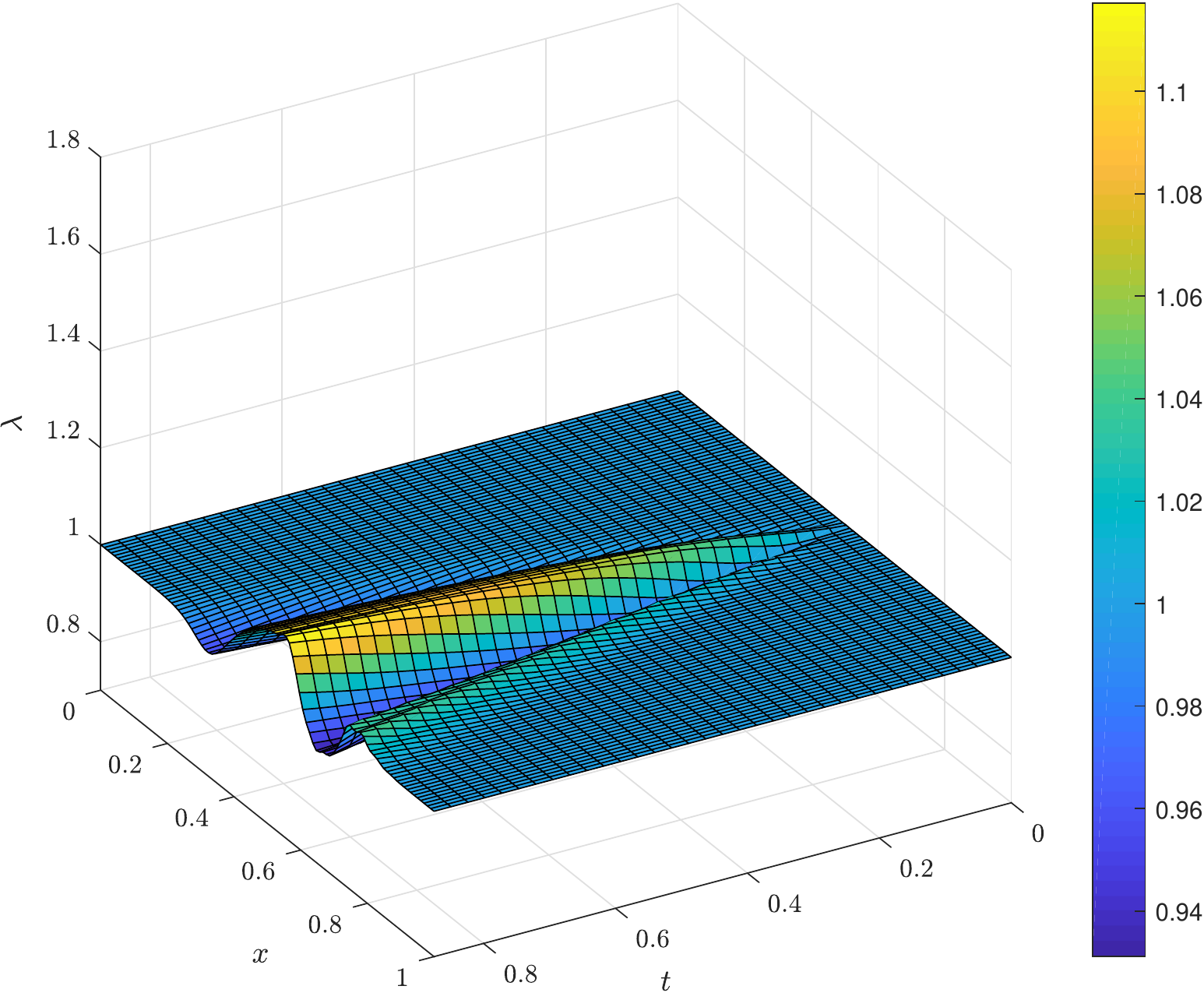}\hspace{1cm}
		\includegraphics[width=.46\textwidth]{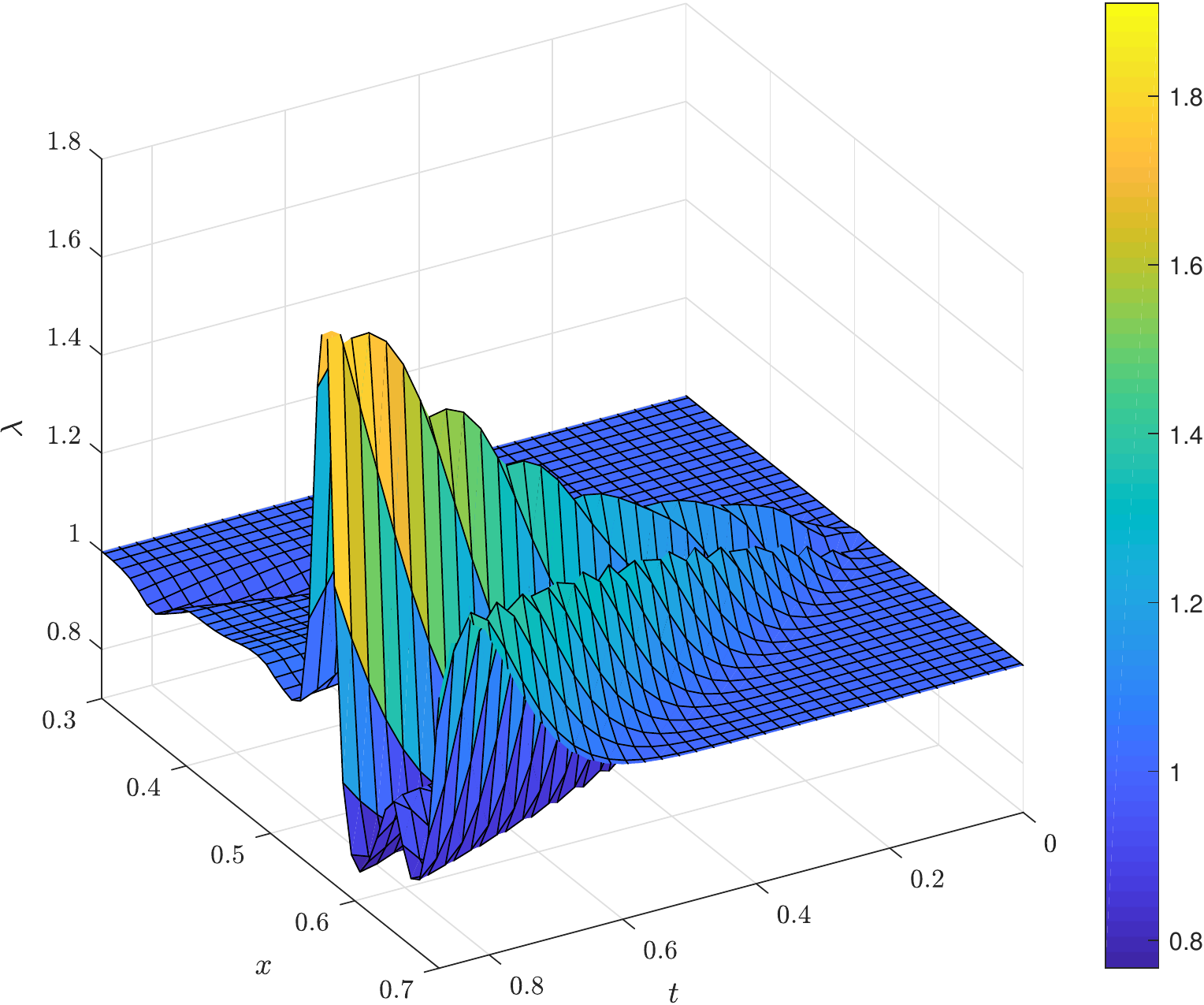}\\
		\includegraphics[width=.46\textwidth]{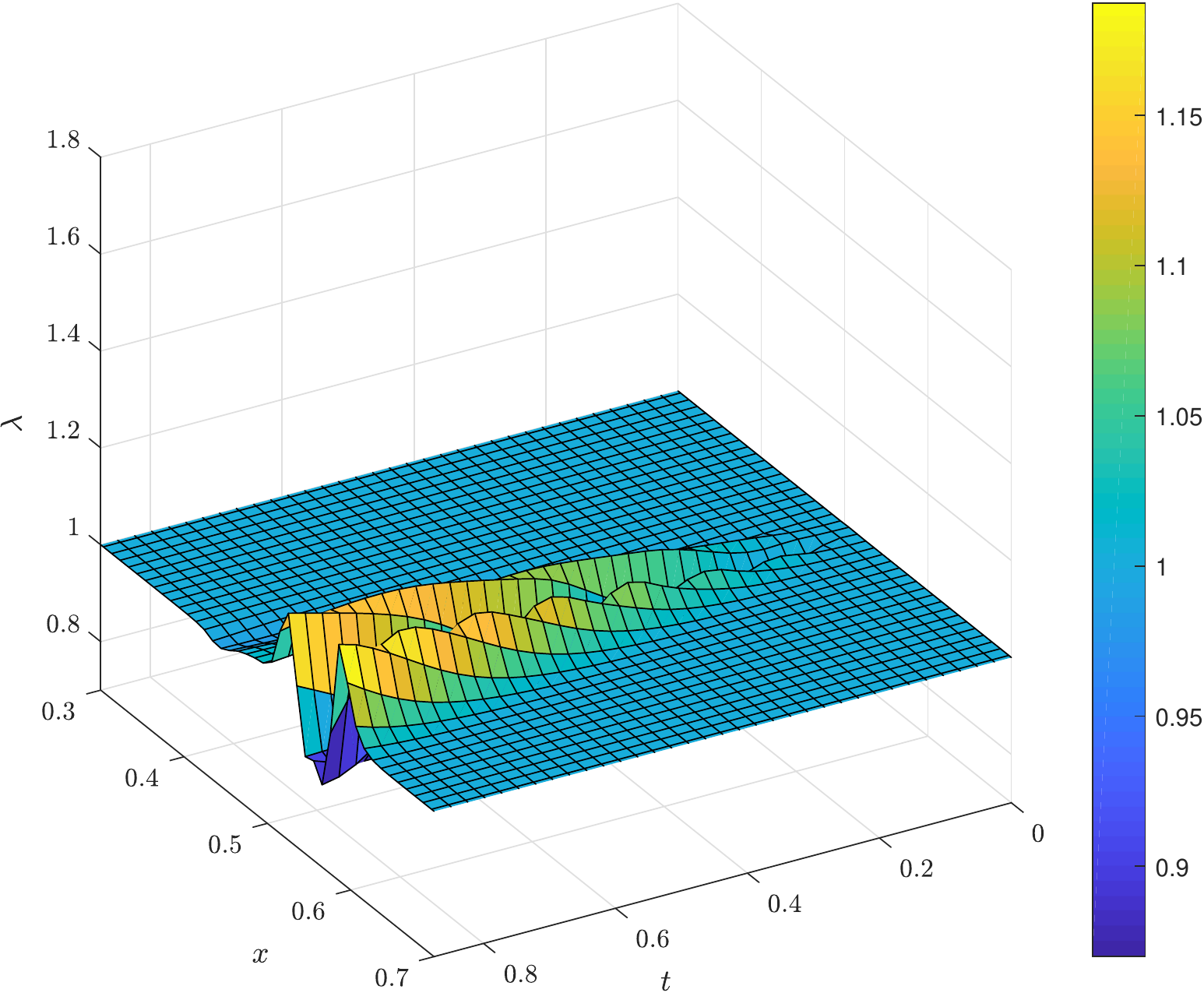}
		\captionof{figure}{Test 3. Sod test with uncertainty in the initial data. Optimal $\lambda^*(x,t)$ for MSCV method based on the BGK control variate strategy. Top left: $\varepsilon=10^{-2}$. Top right: $\varepsilon=10^{-3}$. Bottom: $\varepsilon=2 \times \ 10^{-4}$ at final time $T_f=0.0875$.}
		\label{Figure11}
	\end{center}
\end{figure}

\subsubsection{Test 4. Sod test with uncertain collision kernel}
The initial conditions are the deterministic Sod test data
\bea
&\rho_0(x)=1, \ \ T_0(x)=1 \qquad &\textnormal{if} \ \ 0<x<L/2 \\
&\rho_0(x)=0.125,\ T_0(x)=0.8  \qquad &\textnormal{if} \ \ L/2<x<1
\eea
with the deterministic equilibrium initial condition
\[
f_0(x,\w)=\frac{\rho_0(x)}{2\pi } \exp\left({-\frac{|v|^2}{2T_0(x)}}\right).
\] 
Now, the collision kernel contains the random variable
\[
B(z)=1+sz
\]
with $s=0.99$ and $z$ uniform in $(0,1)$. 

As before, the velocity space is truncated with $v_{\min}=v_{\max}=8$ and the time step is the same for all methods and is taken as $\Delta t=\min\{\Delta x/(2 v_{\max}), \varepsilon\}$ with $\varepsilon$ the Knudsen number fixed to $\varepsilon=5 \times 10^{-4}$ .
%Equilibrium initial distribution, i.e. $f(x,\w,t=0)=Mf(x,\w,t=0)$ is imposed. The truncation of the velocity space is done such that $[-v_{\min},v_{\max}]^2=[-8,8]^2$. As before, the time discretization chosen is a second order Euler method and $\Delta t=\min(0.5 \ dx/v_{\max}, \varepsilon)$ with $\varepsilon$ the Knudsen number equal to $\varepsilon=5 \ 10^{-4}$. The uncertainty is such that the effective relaxation parameter ranges from $\nu\approx 2.5 \ 10^{-4}$ to $\nu\approx 3 \ 10^{-2}$.
The final time is $0.875$. In Figure \ref{Figure12}, we report the expectation of the solution at the final time together with the confidence band $\EE[T]-\sigma_T, \EE[T]+\sigma_T$ with $\sigma_T$ the standard deviation. This solution has been computed by using orthogonal polynomials. We also plot the error for the expected temperature and the final expected solution given by the MC and by the MSCV method with the BGK control variate. Note, in fact, that the compressible Euler control variate coincides with the standard MC method since the equilibrium state is deterministic.
The number of samples used to compute the expected solution is $M=10$ while the number of samples used to compute the control variate is $M_E=10^3$. 
In the same Figure, we show two magnifications of the solution around the point $x=0.5$ and $x=0.7$, i.e. the maximum and the minimum values of the temperature. On the bottom right, we show the optimal parameter $\lambda^*(x,t)$ at different times. 

%In all the images, there are reported the standard Monte Carlo approach and the MSCV methods. The control variate considered in this case is only the one given by the BGK model. In fact, in this case, the compressible Euler equation obtained by fixing the distribution function equal to the equilibrium state ${\bf f}^{\infty}({x,\w,t})$ does not contain any uncertainty and thus it can not diminish the variance of the standard MC method. We consider the case $\lambda=1$, i.e. the control variate strategy based on the Micro-Macro decomposition and $\lambda^*$, i.e. the optimal one. 
\begin{figure}[ht!]
	\begin{center}
		\includegraphics[width=0.45\textwidth]{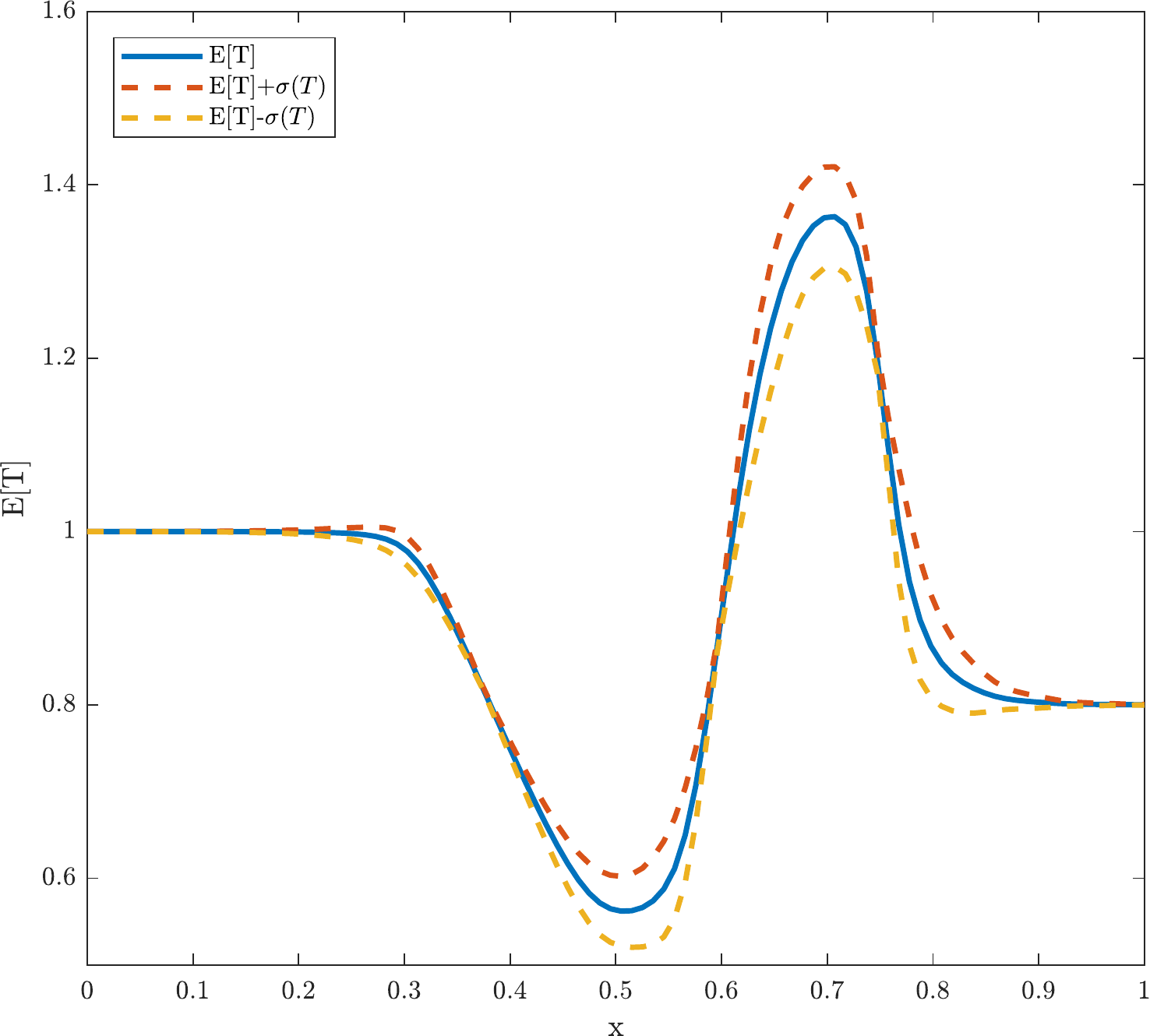}\hspace{1cm}
	\includegraphics[width=.45\textwidth]{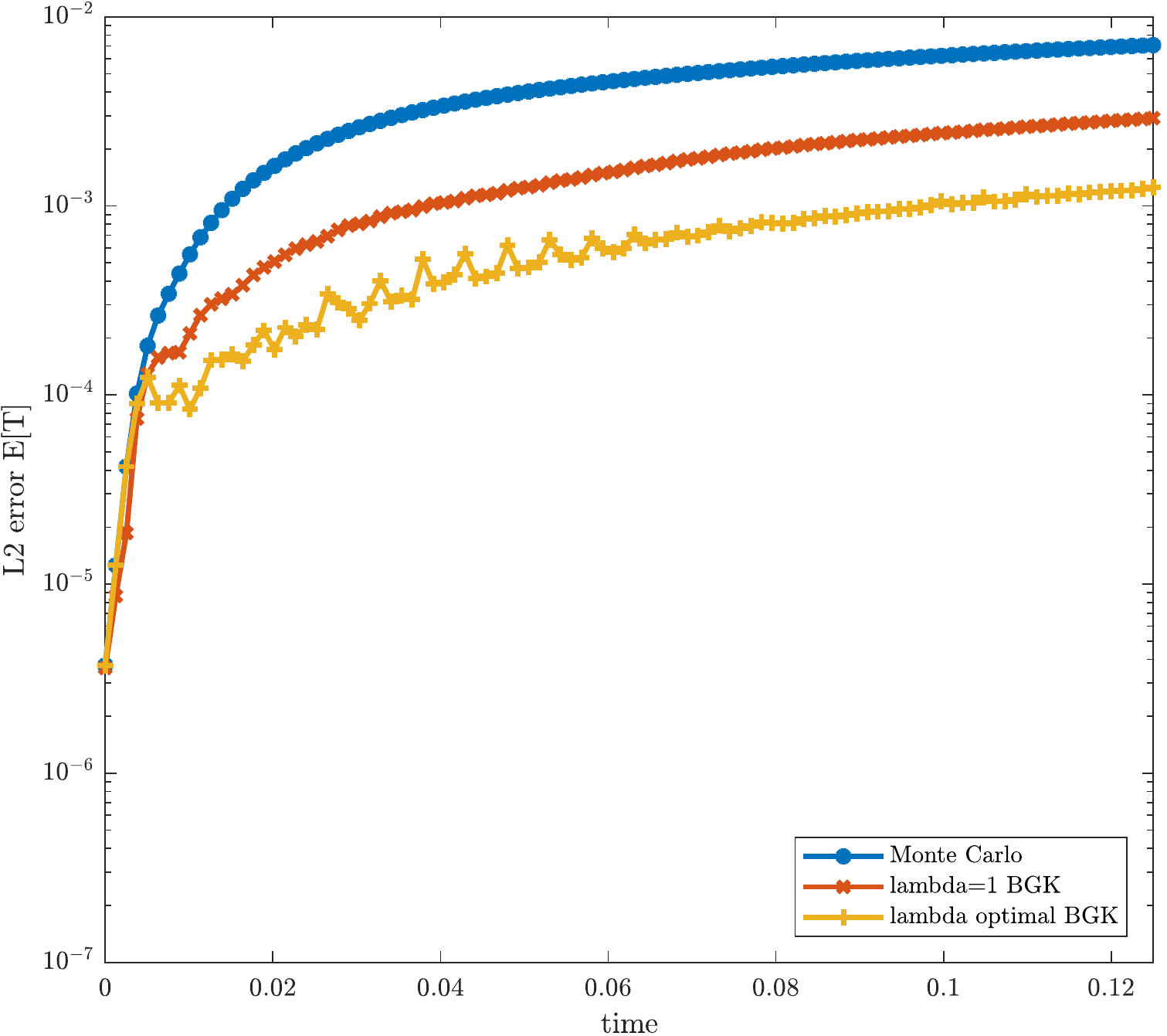}\\
		\includegraphics[width=.45\textwidth]{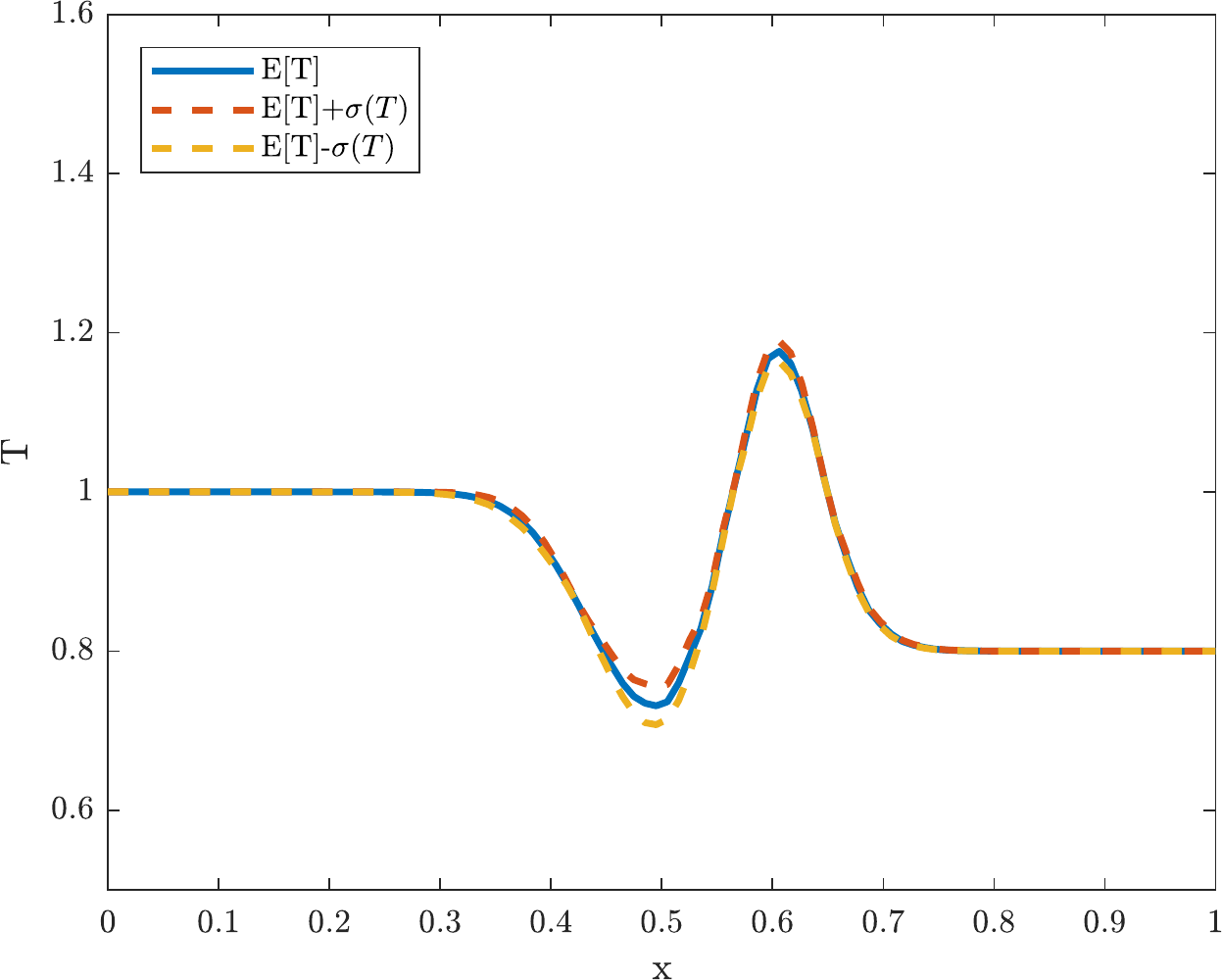}\hspace{1cm}
		\includegraphics[width=.45\textwidth]{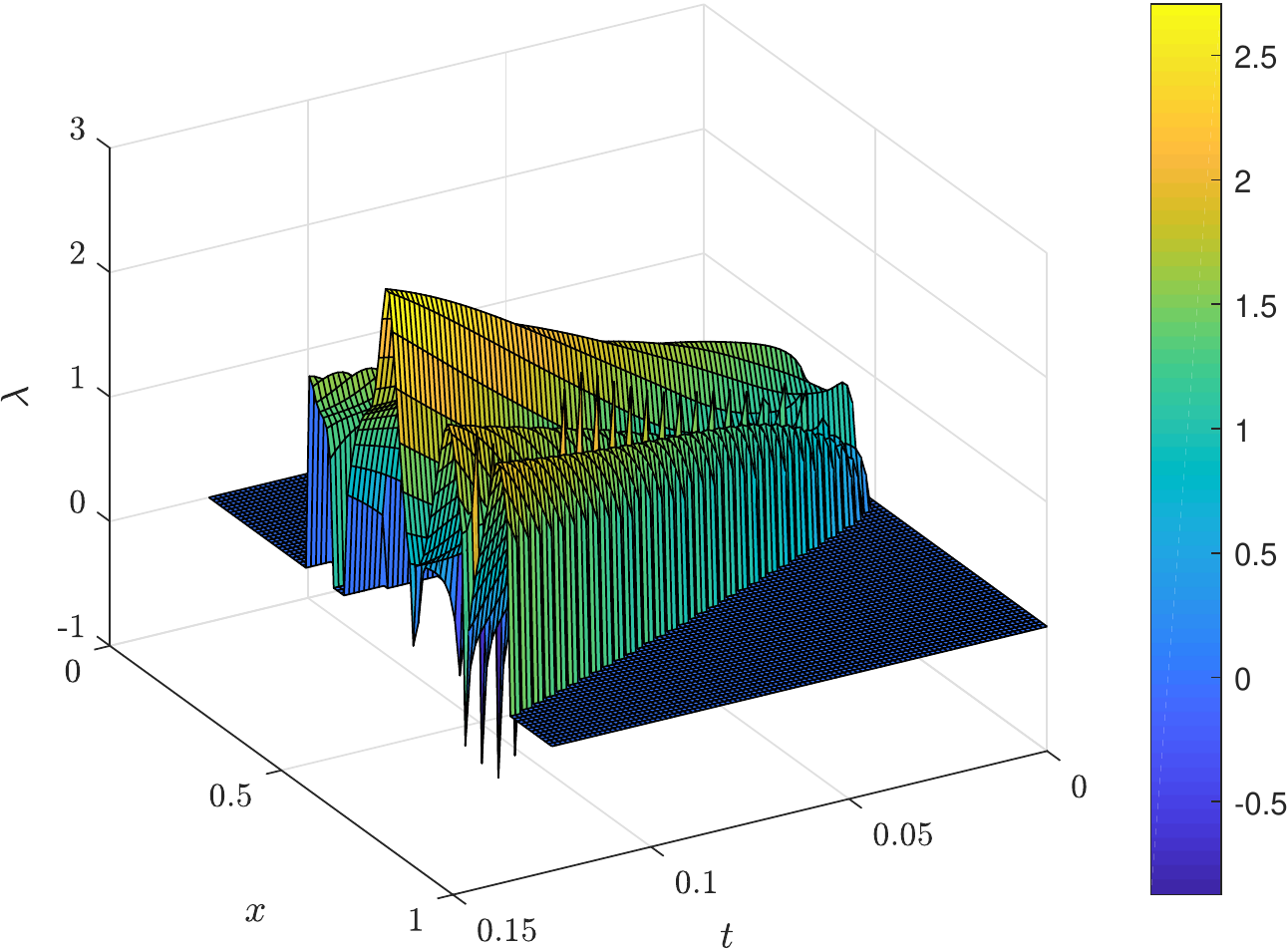}
		\captionof{figure}{Test 4. Sod test with uncertainty in the collision kernel. Top left: Temperature profile at final time. Top right: $L_2$ norm of the error for the expectation of the temperature with $M=10$ samples for the MC and the MSCV method. Bottom left: Temperature profile at final time obtained with the MC and the MSCV method. Bottom right: optimal $\lambda^*(x,t)$ in time and space.}
		\label{Figure12}
	\end{center}
\end{figure}

\subsubsection{Test 5. Sudden heating problem with uncertain boundary condition}
In the last test case, the initial condition is a constant state in space given by
\be
f_0(x,\w)=\frac{1}{2\pi T^0}e^{-\dfrac{\w^2}{2T^0}}, \ T^0=1, \qquad x\in[0,1].
\ee 
At time $t=0$, the temperature at the left wall suddenly changes and it starts to heat the gas. We assume diffusive equilibrium boundary conditions and uncertainty on the wall temperature:
\be
T_w(z)=2(T^0+sz), \ s=0.2,
\ee
and $z$ uniform in $[0,1]$.
The truncation of the velocity space as well the other numerical parameters are the same as in Test 3. The final time is $T_f=0.9$. In Figure \ref{Figure16}, we report the expectation of the solution at the final time together with the confidence band $\EE[T]-\sigma_T, \EE[T]+\sigma_T$ with $\sigma_T$ the standard deviation. This solution has been computed by using orthogonal polynomials. 
\begin{figure}[ht!]
	\begin{center}
		\includegraphics[width=0.45\textwidth]{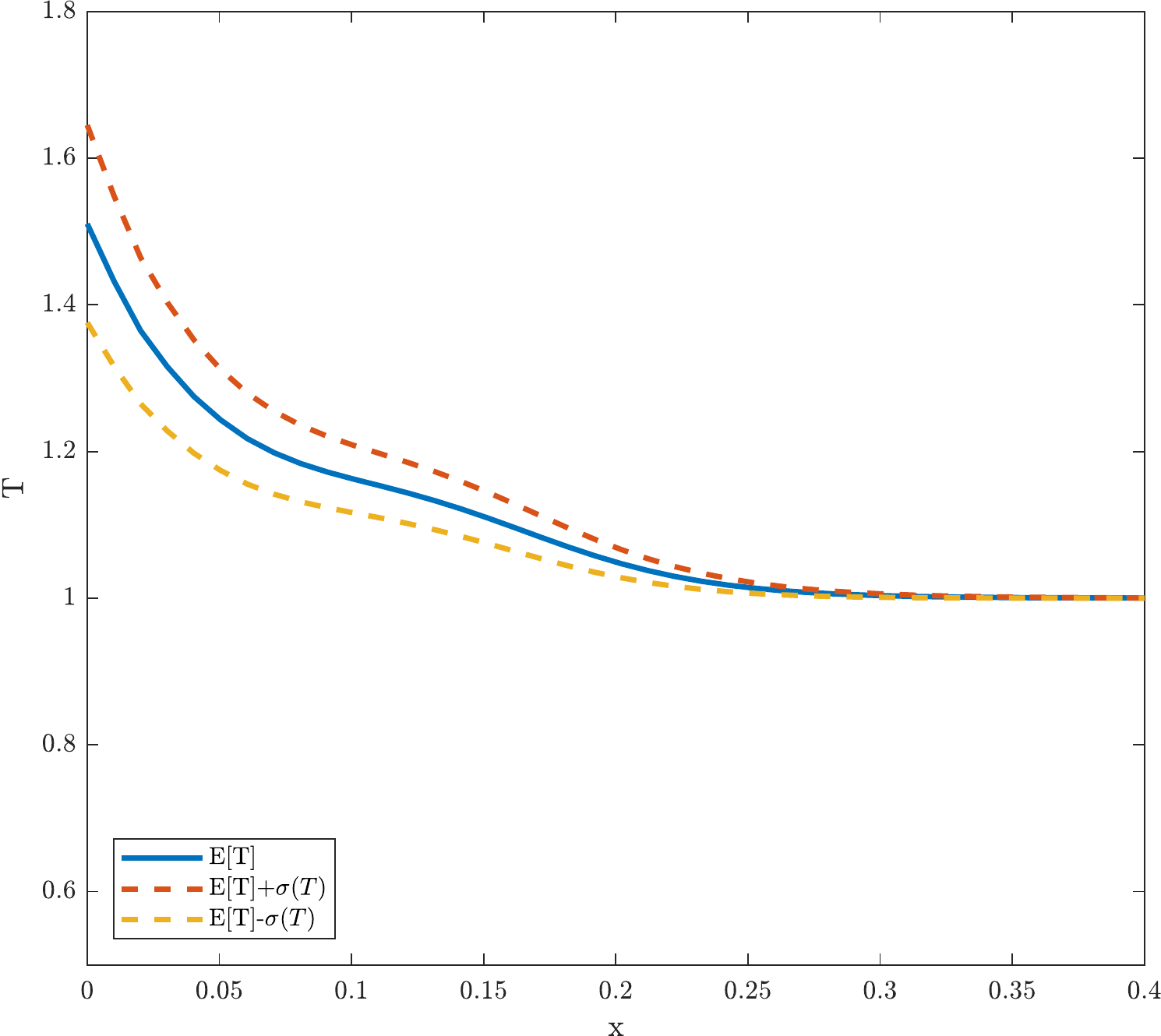}\hspace{1cm}
		\includegraphics[width=0.45\textwidth]{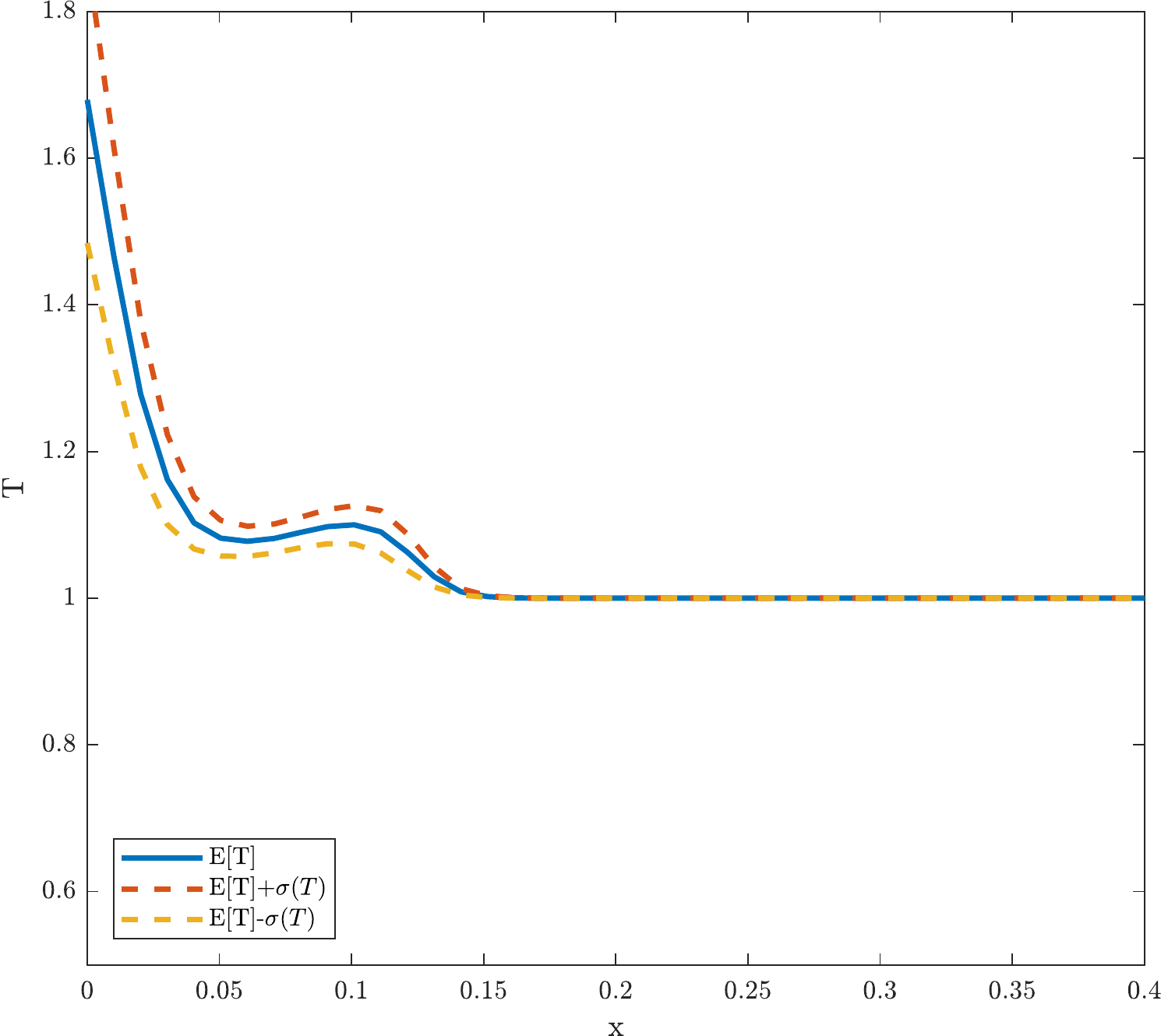}\\
		\includegraphics[width=0.45\textwidth]{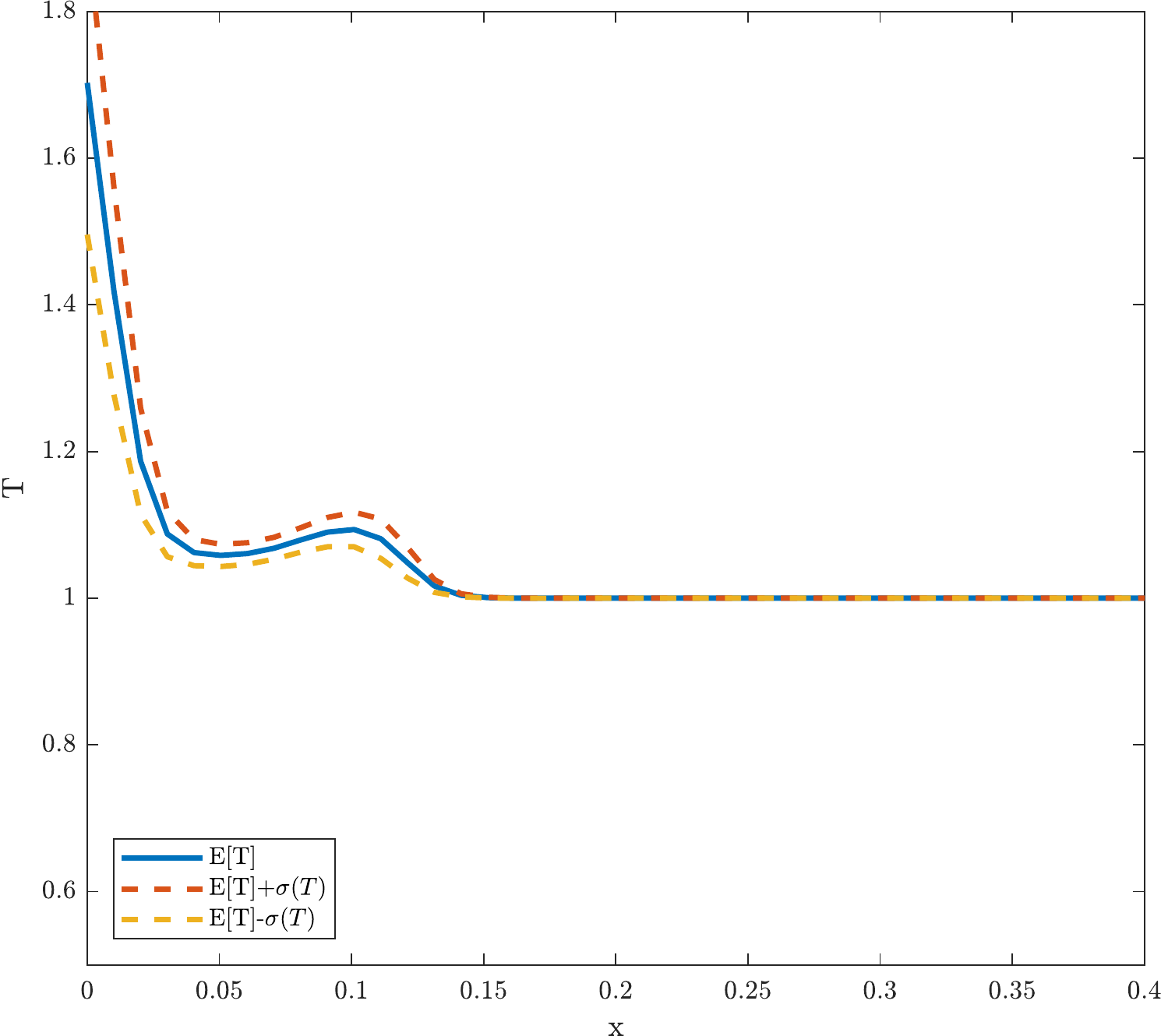}
		\captionof{figure}{Test 5. Sudden heating problem with uncertainty in the boundary condition. Temperature profile at final time. Top left: $\varepsilon=10^{-2}$. Top right: $\varepsilon=10^{-3}$. Bottom: $\varepsilon=2 \ 10^{-4}$. Magnification around the left boundary.}
		\label{Figure16}
	\end{center}
\end{figure}

In Figure \ref{Figure17}, we report the error for the expected value of the temperature as a function of time. The number of samples used is the same for both control variates, namely $M_E=10^3$ (on the left) and $M_E=10^4$ (on the right). 
%The Knudsen number diminishes from top to bottom.
\begin{figure}[ht!]
		\begin{center}
		\includegraphics[width=.4\textwidth]{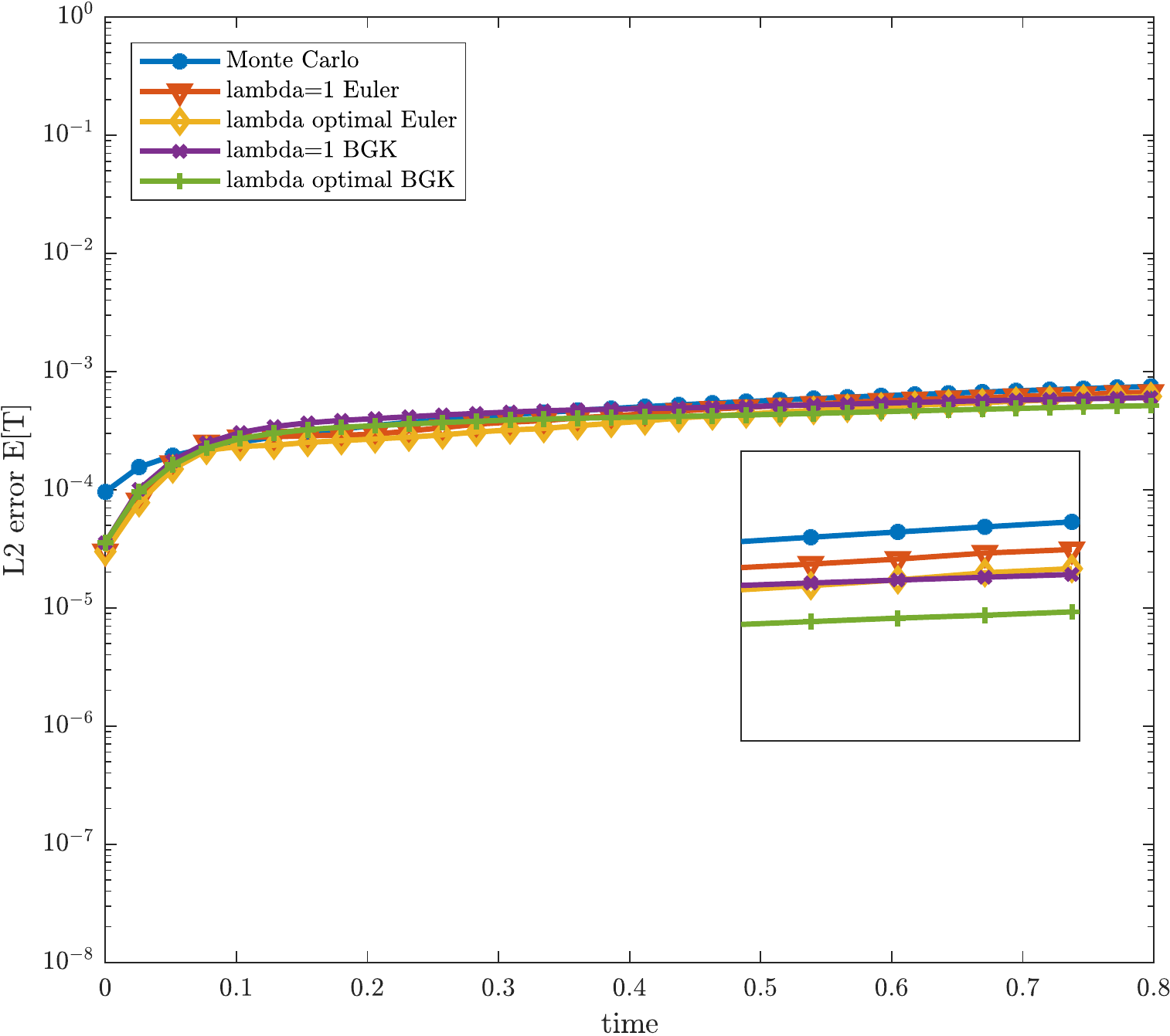}\hspace{1cm}
		\includegraphics[width=.4\textwidth]{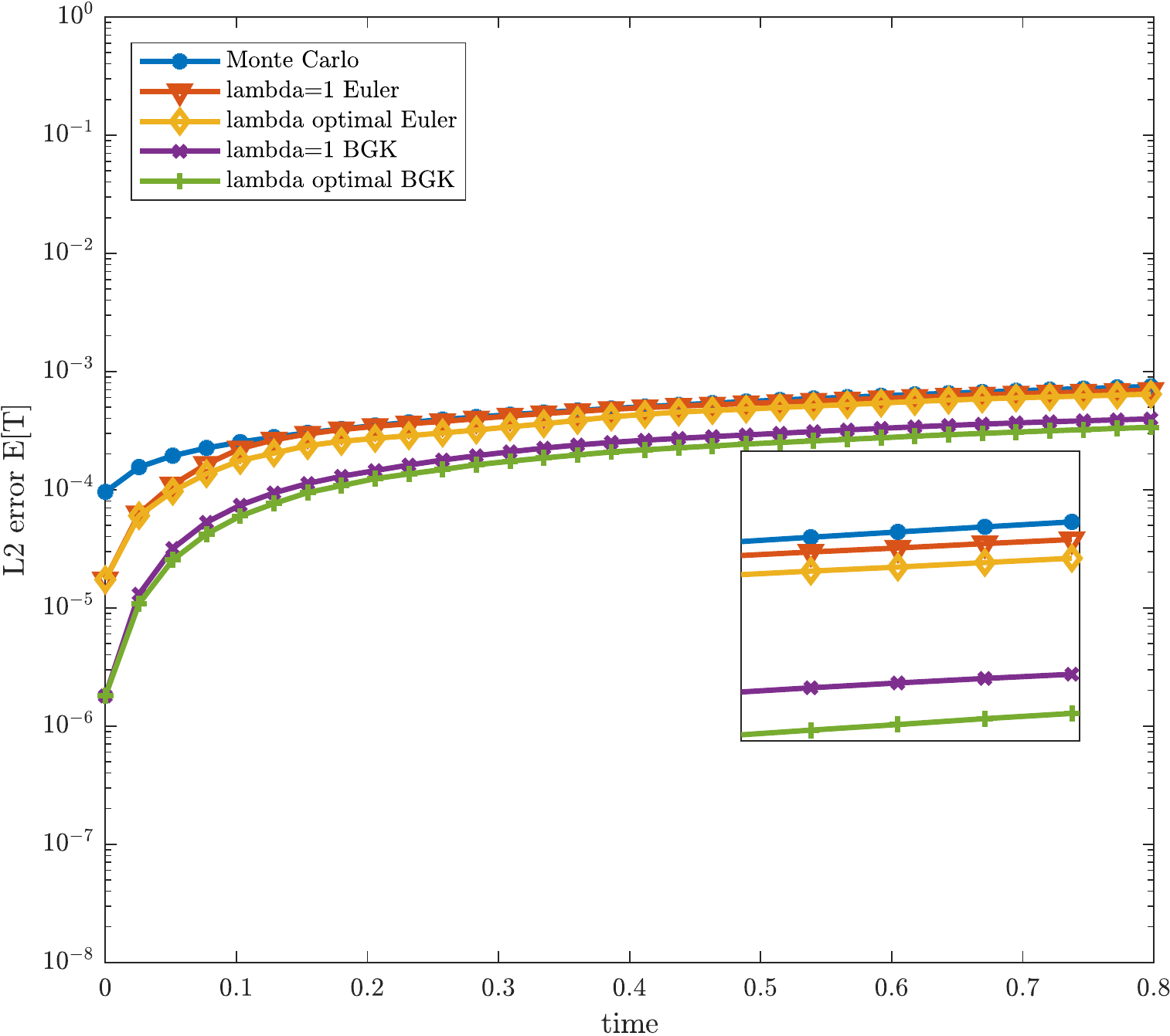}\\
		\includegraphics[width=.4\textwidth]{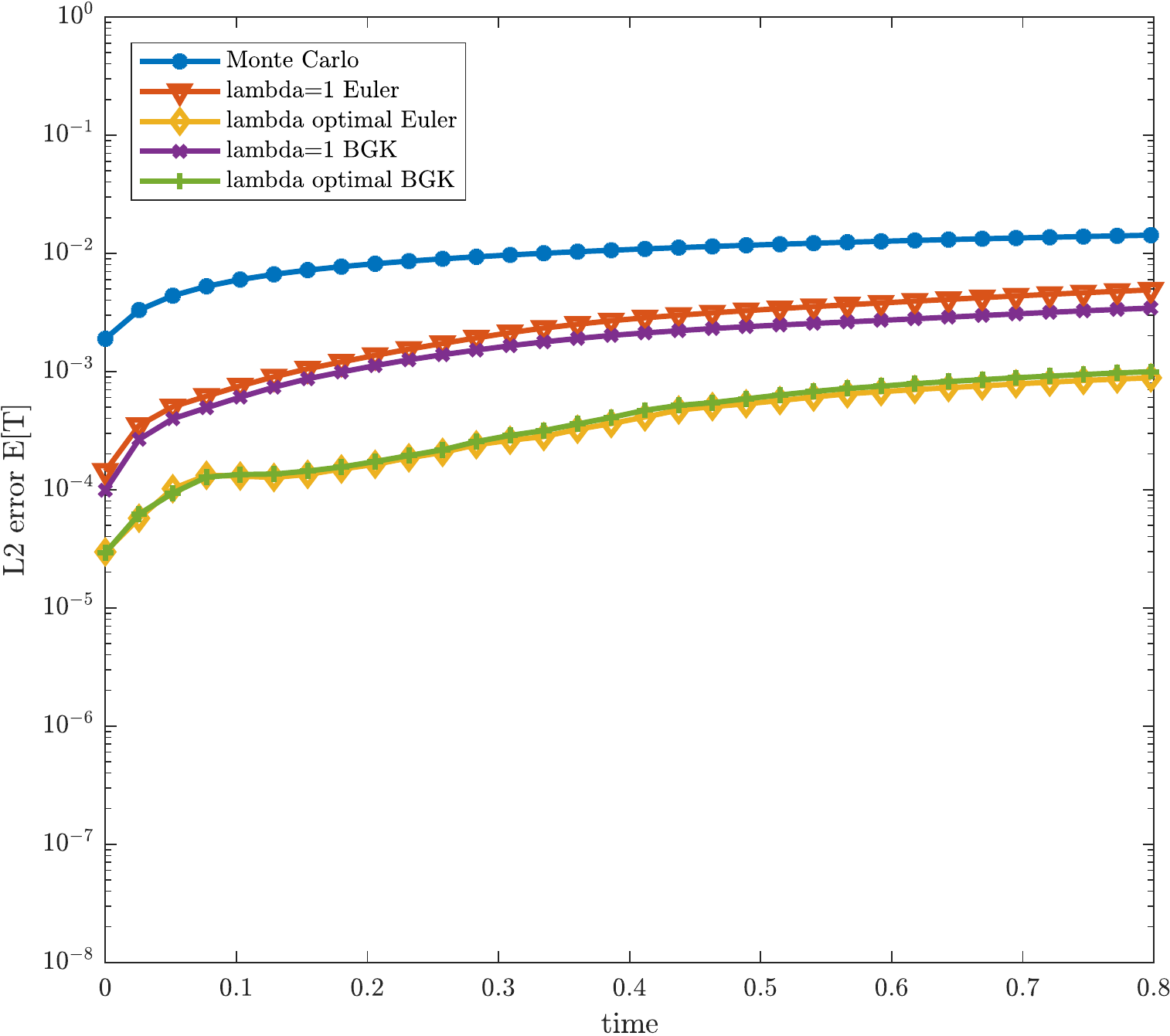}\hspace{1cm}
		\includegraphics[width=.4\textwidth]{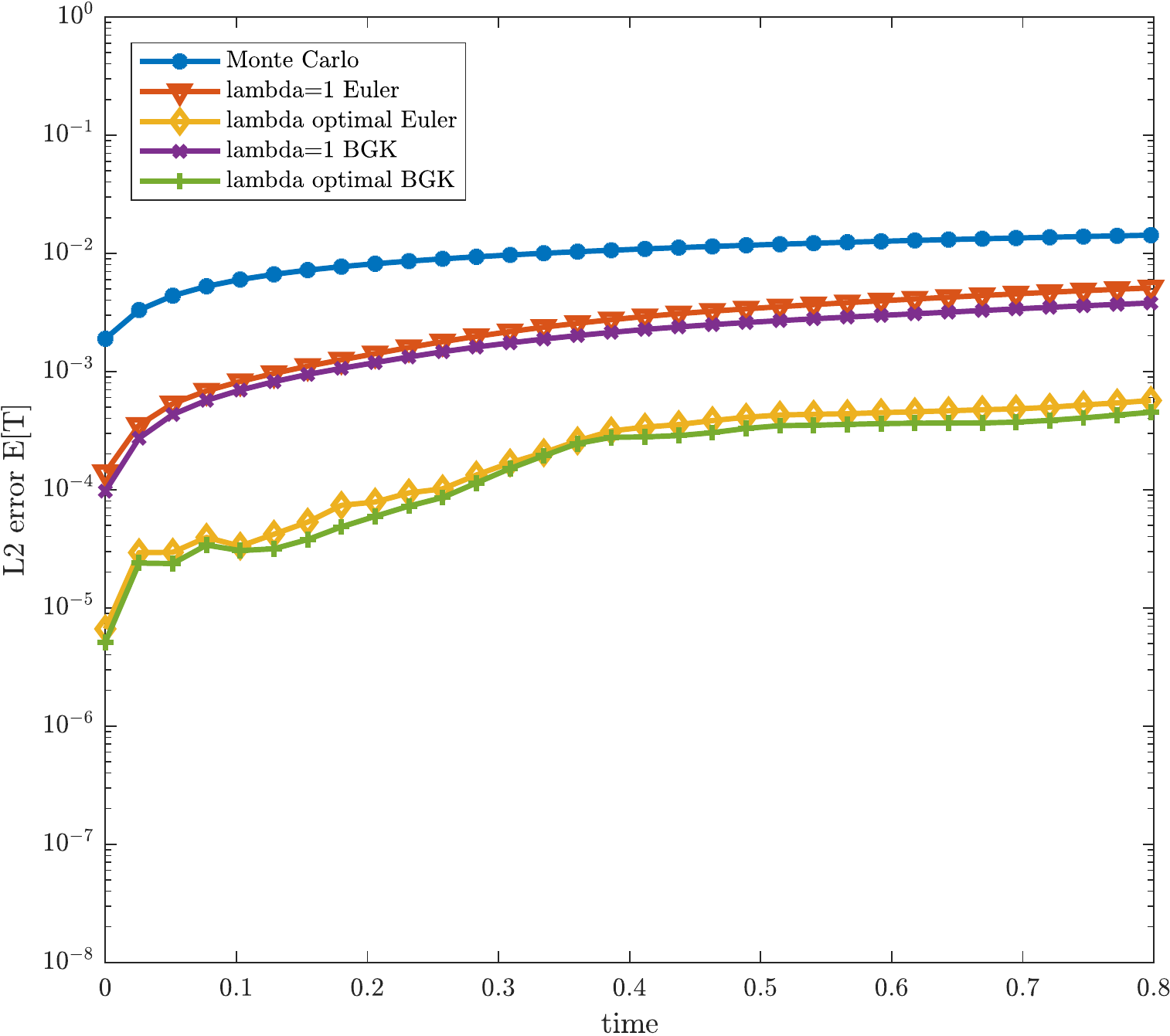}\\
		\includegraphics[width=.4\textwidth]{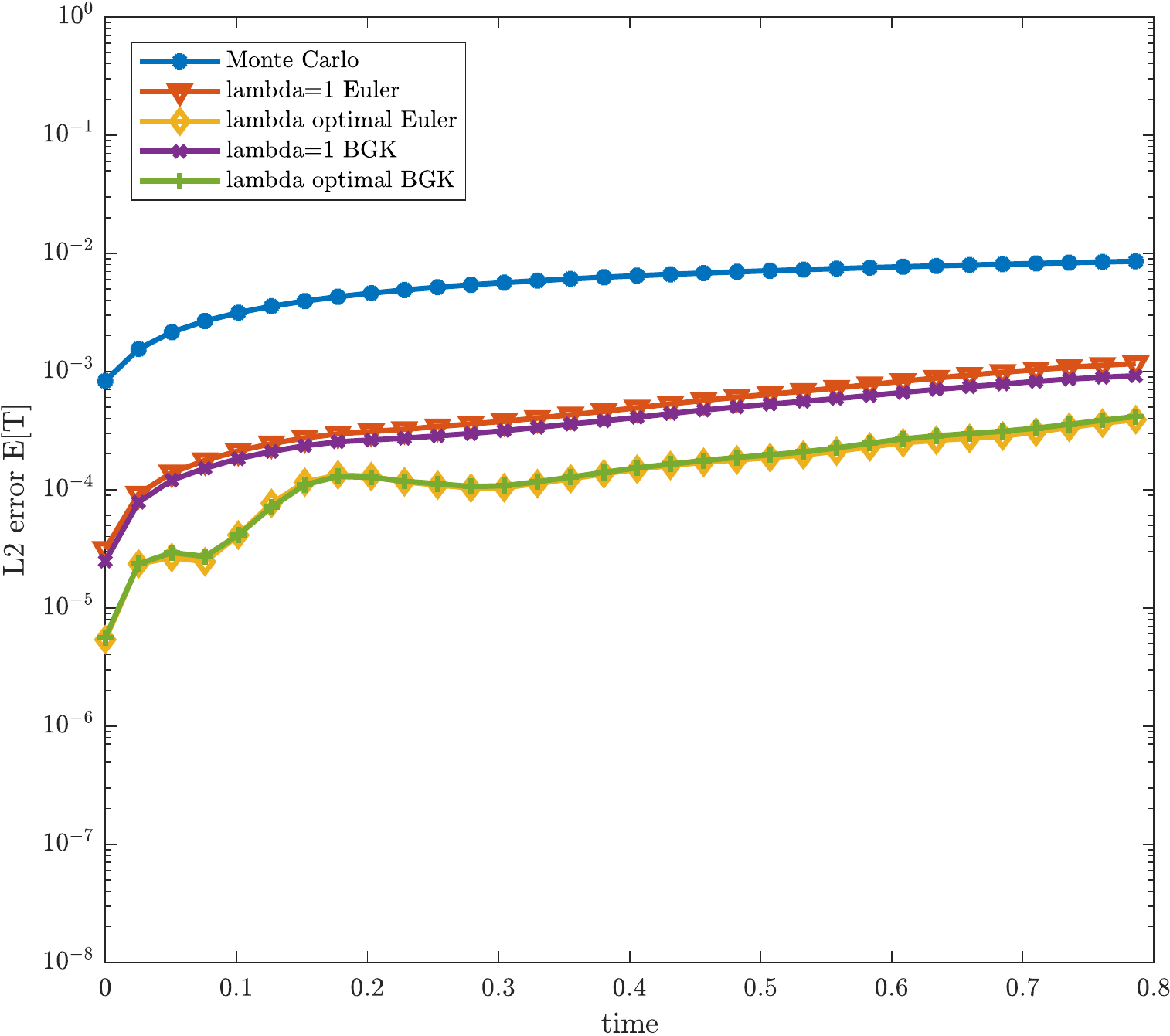}\hspace{1cm}
		\includegraphics[width=.4\textwidth]{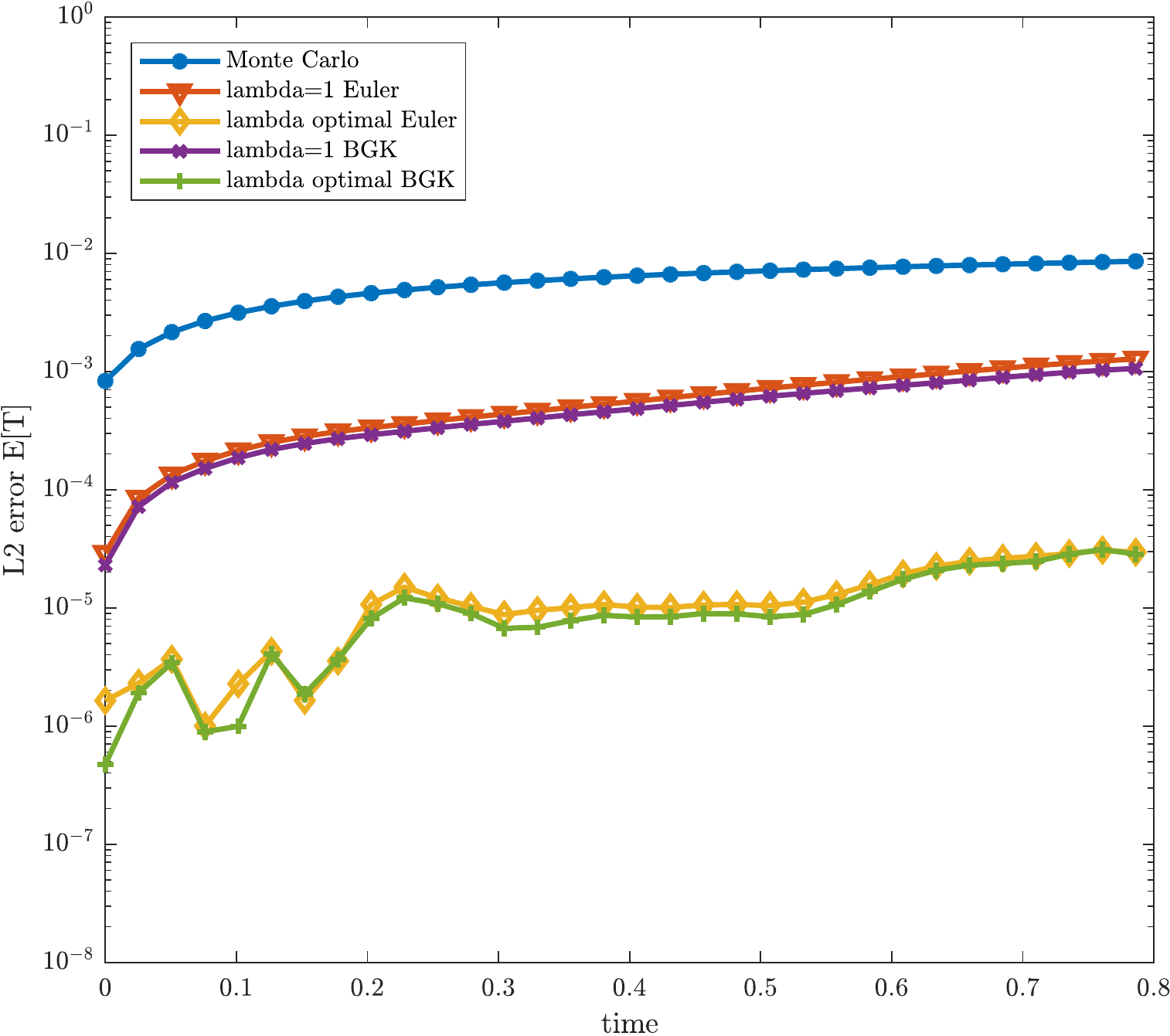}
		\captionof{figure}{Test 5. Sudden heating problem with uncertainty in the boundary condition for the Monte Carlo method and the MSCV method for various control variates strategies.  $L_2$ norm of the error for the expectation of the temperature with $M=10$ samples. Top left: $\varepsilon=10^{-2}$. Top right: $\varepsilon=10^{-3}$. Bottom: $\varepsilon=2 \times 10^{-4}$. Left panels: $M_E=10^3$. Right panels: $M_E=10^4$.}\label{Figure17}
	\end{center}
\end{figure}
\begin{figure}[ht!]
	\begin{center}
		\includegraphics[width=.4\textwidth]{Figure/test3_inhomo/Error1-eps-converted-to.pdf}\hspace{1cm}
		\includegraphics[width=.4\textwidth]{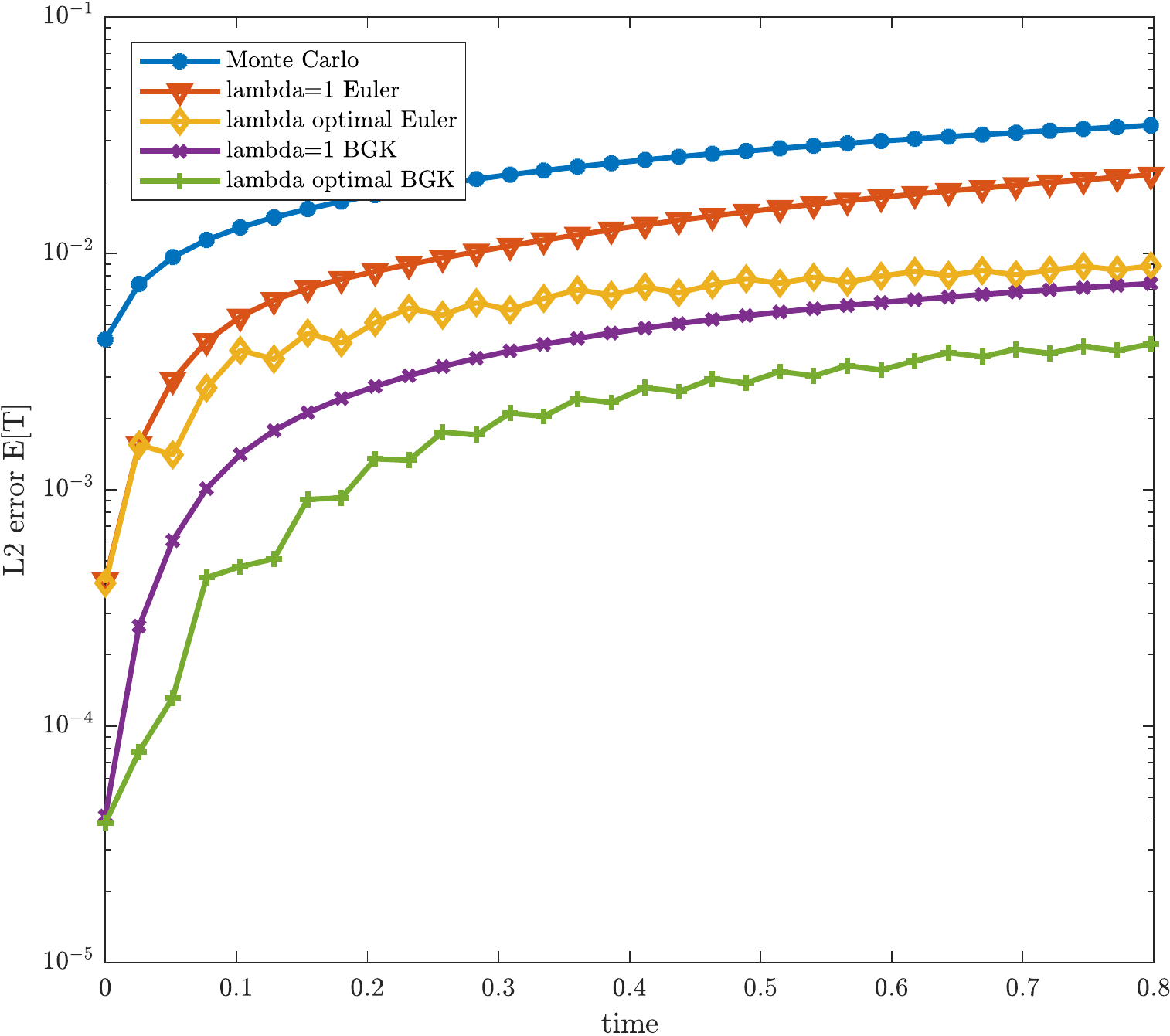}\\
		\captionof{figure}{Test 5. Sudden heating problem with uncertainty in the boundary condition for the Monte Carlo method and the MSCV method for various control variates strategies. $L_2$ norm of the error for the expectation of the temperature with $M=10$ samples, $\varepsilon=10^{-2}$ and $M_E=10^4$. Left variance $\sigma^2_f=1/12$, right $\sigma^2_f=10/12$.}\label{Figure18}
	\end{center}
\end{figure}
Again, the strongest improvement in terms of accuracy is obtained with the MSCV method based on the BGK control variate and optimal $\lambda^*$ independently on the Knudsen number value when $M_E$ is sufficiently large. However, for smaller values of $M_E$ the two control variates give similar results. In particular, the difference between the two control variates, as expected, diminishes for small values of $\varepsilon$.
 {In Figure \ref{Figure18}, we compare the errors for the expected value of the temperature as a function of time in the case in which the variance $\sigma^2_f$ in the stochastic variable $z$ is ten times larger. We only show the error curves in the case $\varepsilon=10^{-2}$ which gives the largest differences. In this situation, we see that the MSCV methods may give better performances when the variance of the stochastic variable is larger.}
 
In Figure \ref{Figure19}, we finally report the shape of the optimization coefficient $\lambda^*(x,t)$ for the different Knudsen numbers.
\begin{figure}[ht!]
	\begin{center}
		\includegraphics[width=.48\textwidth]{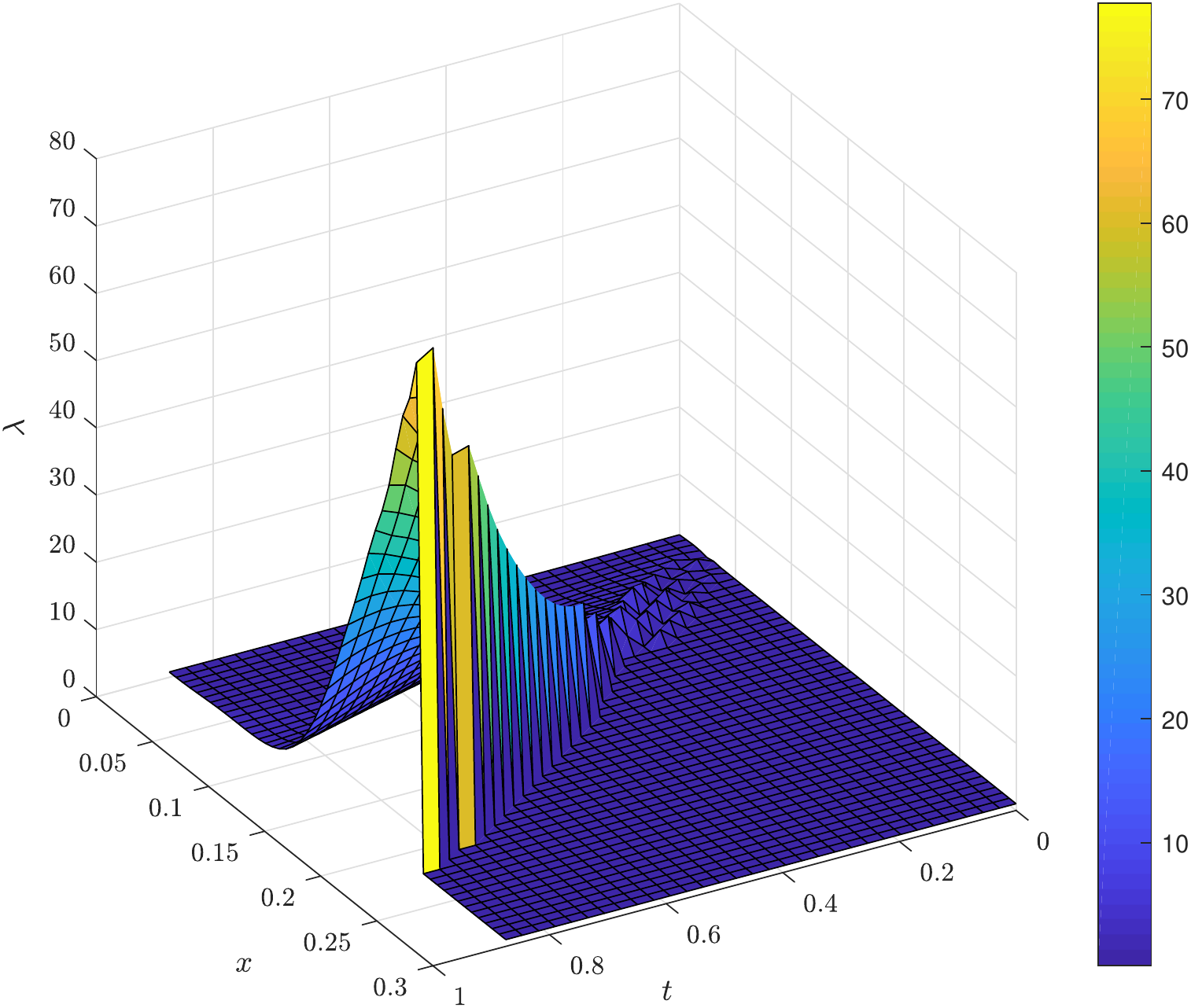}\hspace{0.5cm}
		\includegraphics[width=.48\textwidth]{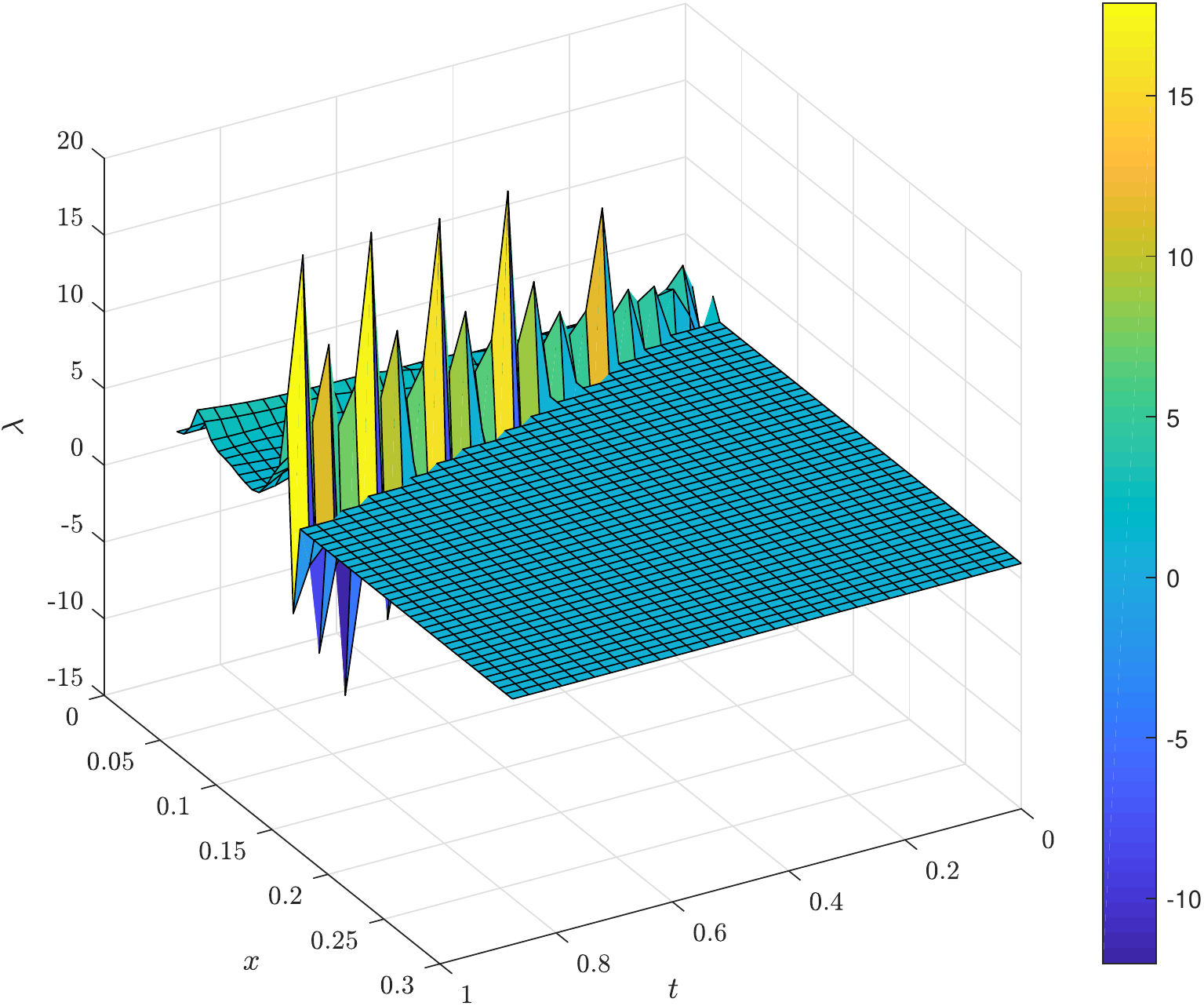}\\
		\includegraphics[width=.48\textwidth]{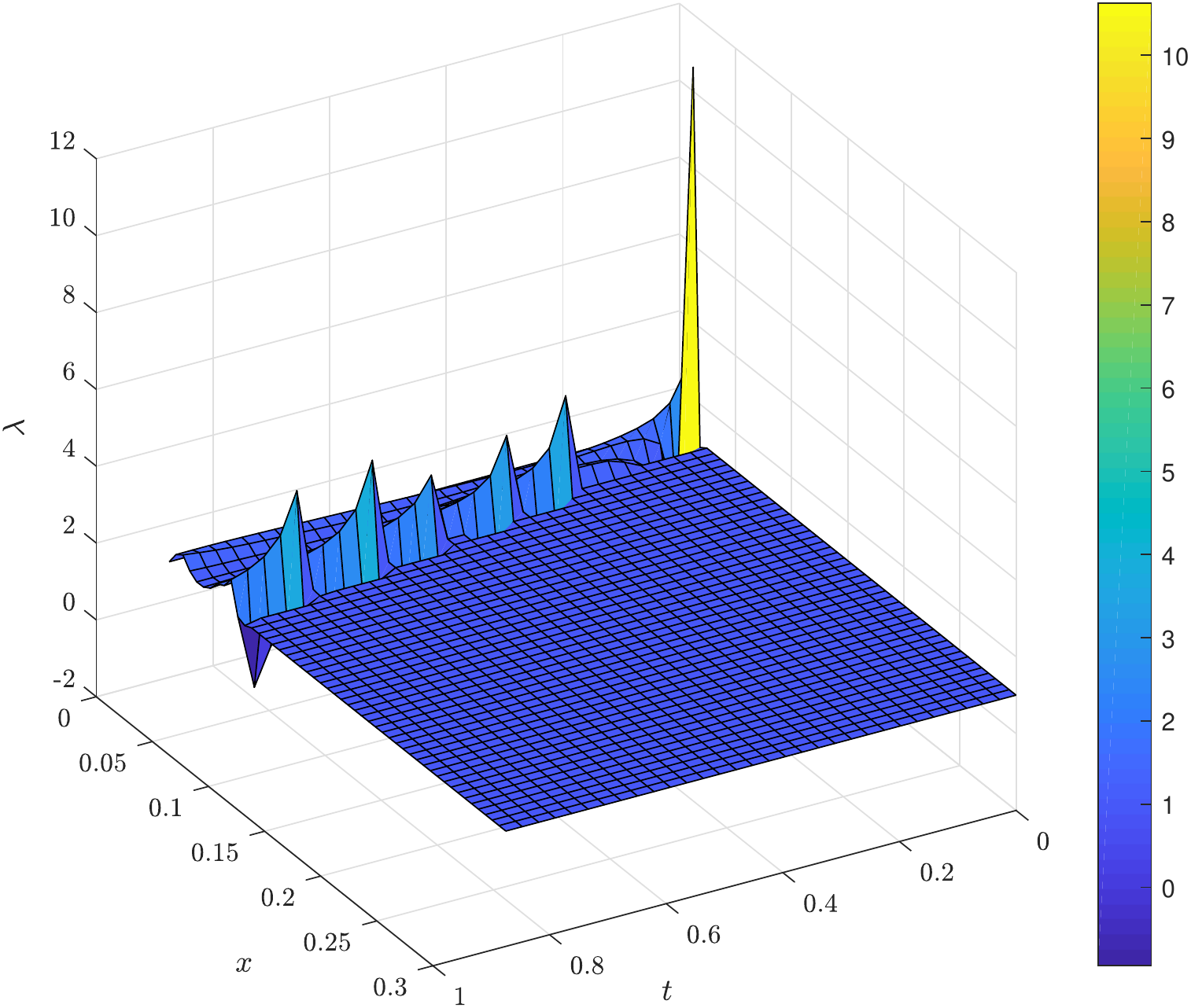}
		\captionof{figure}{Test 5. Sudden heating problem with uncertainty in the boundary condition. Optimal $\lambda^*(x,t)$ for MSCV model based on the BGK control variate. Top left: $\varepsilon=10^{-2}$. Top right: $\varepsilon=10^{-3}$. Bottom: $\varepsilon=2 \times 10^{-4}$ at final time $T_f=0.09$. Magnification around the left boundary.}
		\label{Figure19}
	\end{center}
\end{figure}

\section{Conclusions}
We introduced novel variance reduction techniques for uncertainty quantification of kinetic equations. The methods make use of suitable control variate models, which can be evaluated at a fraction of the cost of the full kinetic model, to accelerate the convergence of standard Monte Carlo methods. The new methods operate in a different way at the various space-time scales of the problem accordingly to the control variate adopted. In particular, the multi-scale control variate (MSCV) methods here presented share the common feature that for space homogeneous problems the variance vanishes for large times, whereas in a space non homogeneous setting the same property hods true in the fluid limit. The numerical results confirm the theoretical analysis and show that MSCV methods outperform standard Monte Carlo approaches in all regimes. Even if, in this work, we mainly focused on the challenging case of the Boltzmann equation, let us mention that the approach described is fully general and can be applied to a large class of kinetic equations and related problems. Several improvements and extensions are in principle possible, for example generalizing the approach to the case of multiple control variates. Research results in this direction will be presented in the nearby future.       

%%%%%%%%%%%%%%%%%%%%%%%

%\newpage
%%%%%%%%%%%%%%%%%%%%%%%
% thebibliography
%%%%%%%%%%%%%%%%%%%%%%%
%%%%%%%%%%%%%%%%%%%%%%%% referenc.tex %%%%%%%%%%%%%%%%%%%%%%%%%%%%%%
% sample references
% %
% Use this file as a template for your own input.
%
%%%%%%%%%%%%%%%%%%%%%%%% Springer-Verlag %%%%%%%%%%%%%%%%%%%%%%%%%%
%
% BibTeX users please use
% \bibliographystyle{}
% \bibliography{}
%

\end{document}